\newtheorem{theorem}{Theorem}[section]
\newtheorem{lemma}[theorem]{Lemma}
\newtheorem{definition}[theorem]{Definition}
\newtheorem{proposition}[theorem]{Proposition}
\newtheorem{corollary}[theorem]{Corollary}
\newtheorem{conjecture}[theorem]{Conjecture}
\theoremstyle{remark}
\newtheorem{remark}[theorem]{Remark}
\newtheorem{question}[theorem]{Question}
\newtheorem{example}[theorem]{Example}
\newcommand {\bfe} {\mathbf{e}}
\newcommand {\bfn} {\mathbf{n}}
\newcommand {\bfv} {\mathbf{v}}
\newcommand {\bflim} {\mathbf{lim}}
\newcommand {\bbC} {\mathbb{C}}
\newcommand {\bbF} {\mathbb{F}}
\newcommand {\bbN} {\mathbb{N}}
\newcommand {\bbQ} {\mathbb{Q}}
\newcommand {\bbR} {\mathbb{R}}
\newcommand {\bbZ} {\mathbb{Z}}
\newcommand {\rmN} {\mathrm{N}}
\newcommand {\calD} {\mathcal{D}}
\newcommand {\calE} {\mathcal{E}}
\newcommand {\calL} {\mathcal{L}}
\newcommand {\calM} {\mathcal{M}}
\newcommand {\Pic}{\mathop{\mathrm{Pic}}}
\newcommand {\Gal}{\mathop{\mathrm{Gal}}}
\newcommand {\chr}{\mathop{\mathrm{char}}}
\newcommand {\Res}{\mathop{\mathrm{Res}}}
\newcommand {\sign}{\mathop{\mathrm{sign}}}
\renewcommand {\Re}{\mathop{\mathrm{Re}}}
\title{Asymptotic properties of zeta functions over finite fields}
\author{Alexey Zykin}
\address{
Alexey Zykin
\newline \indent
National Research University Higher School of Economics\newline \indent
AG Laboratory NRU HSE \newline \indent
117312, Vavilova st., 7, Moscow, Russia
\newline \indent
Laboratoire Poncelet (UMI 2615)
\newline \indent
Institute for Information Transmission Problems of the Russian Academy of Sciences
}
\email{alzykin@gmail.com}
\date{}
\thanks{The author was partially supported by AG Laboratory NRU HSE, RF government grant, ag.  11.G34.31.0023, by the grants RFBR 12-01-31280 mola, RFBR 11-01-00393-a, RFBR 12-01-92697-Inda, by Simons--IUM fellowship, and by Dmitry Zimin's ``Dynasty'' foundation.}
\begin{document}

\begin{abstract}
In this paper we study asymptotic properties of families of zeta and $L$-functions over finite fields. We do it in the context of three main problems: the basic inequality, the Brauer--Siegel type results and the results on distribution of zeroes. We generalize to this abstract setting the results of Tsfasman, Vl\u adu\c t and Lachaud, who studied similar problems for curves and (in some cases) for varieties over finite fields. In the classical case of zeta functions of curves we extend a result of Ihara on the limit behaviour of the Euler--Kronecker constant. Our results also apply to $L$-functions of elliptic surfaces over finite fields, where we approach the Brauer--Siegel type conjectures recently made by Kunyavskii, Tsfasman and Hindry.
\end{abstract}

\maketitle
\section{Introduction}
The origin of the asymptotic theory of global fields and their zeta-functions can be traced back to the following classical question: what is the maximal number of points $N_q(g)$ on a smooth projective curve of genus $g$ over the finite field $\bbF_q$. The question turns out to be difficult and a wide variety of methods has been used for finding both upper and lower bounds.

The classical bound of Weil stating that
$$|N_q(g)-q-1|\leq 2g\sqrt{q}$$
though strong turns out to be far from optimal. A significant improvement for it when $g$ is large was obtained by Drinfeld and Vl\u adu\c t (\cite{DV}). Namely, they proved that $\limsup\limits_{g\to\infty} \frac{N_q(g)}{g} \leq \sqrt{q}-1.$

This inequality was a starting point for an in-depth study of asymptotic properties of curves over finite fields and of their zeta functions initiated by Tsfasman and Vl\u adu\c t. This work went far beyond this initial inequality and has led to the introduction of the concept of limit zeta function which turned out to be very useful \cite{TV97}. It also had numerous applications to coding theory via the so-called algebraic geometric codes (see, for example, the book \cite{TVN} for some of them).

The above mentioned study of limit zeta functions for families curves involves three main topics:
\begin{enumerate}
\item The basic inequality, which is a generalization of the Drinfeld -- Vl\u adu\c t inequality on the number of points on curves.
\item Brauer--Siegel type results which are the extensions of the classical Brauer--Siegel theorem describing the asymptotic behaviour of the class numbers and of the regulators in families of number fields. Here asymptotic properties of the special values of zeta functions of curves (such as the order of the Picard group) are studied.
\item The distribution of zeroes of zeta functions (Frobenius eigenvalues) in families of curves.
\end{enumerate}

There are at least two main directions in the further study of these topics. First, one may ask what are the number field counterparts of these results (for number fields and function fields are regarded by many as facets of a single gemstone). The translation of these results to the number field case is the subject of the paper \cite{TV02}. The techniques turns out to be very analytically involved but the reward is no doubts significant as the authors managed to resolve some of the long standing problems (such as the generalization of the Brauer--Siegel theorem to the asymptotically good case, that is when the ratio $n_K/\log |D_K|$ of the degree to the logarithm of the discriminant does not tend to zero) as well as to improve several difficult results (Odlyzko--Serre inequalities for the discriminant, Zimmert's bound for regulators).

Second, one may ask what happens with higher dimensional varieties over finite fields. Here the answers are less complete. The first topic (main inequalities) was extensively studied in \cite{LT}. The results obtained there are fairly complete, though they do not directly apply to $L$-functions (such as $L$-functions of elliptic curves over function fields). The second topic is considerably less developed though it received some attention in the recent years in the case of elliptic surfaces \cite{HP}, \cite{KT} and in the case of zeta functions of varieties over finite fields \cite{Z1}. As for the results on the third topic one can cite a paper by Michel \cite{Mi} where the case of elliptic curves over $\bbF_q(t)$ is treated. Quite a considerable attention was devoted to some finer questions related to the distribution of zeroes \cite{KS}. However, to our knowledge, not a single result of this type for asymptotically good families of varieties is known.

The goal of our paper is to study the above three topic in more generality separating fine arithmetic considerations from a rather simple (in the function field case) analytic part. We take the axiomatic approach, defining a class of $L$-functions to which our results may be applicable. This can be regarded as the function field analogue of working with the Selberg class in characteristic zero, though obviously the analytic contents in the function case is much less substantial (and often times even negligible). In our investigations we devote more attention to the second and the third topics as being less developed then the first one. So, while giving results on the basic inequality, we do not seek to prove them in utmost generality (like in the paper \cite{LT}). We hope that this allows us to gain in clarity of the presentation as well as to save a considerable amount of space.

We use families of $L$-functions of elliptic curves over function fields as our motivating example. After each general statement concerning any of the three topics we specify what concrete results we get for zeta functions of curves, zeta functions of varieties over finite fields, and $L$-functions of elliptic curves over function fields. In the study of the second topic we actually manage to find something new even in the classical case of zeta functions of curves, namely we prove a statement on the limit behaviour of zeta functions of which the Brauer--Siegel theorem from \cite{TV97} is a particular case (see theorem \ref{BSGen} and corollary \ref{BSCurves}). We also reprove and extend some of Ihara's results on  Euler--Kronecker constant of function fields \cite{Ih} incorporating them in the same general framework of limit zeta functions (see corollary \ref{EulerKronecker}). Our statements about the distribution of zeroes (theorem \ref{zerodistrib} and corollary \ref{zeroesMi}) imply in the case of elliptic curves over function fields a generalization of a result due to Michel \cite{Mi} (however, unlike us, Michel provides a rather difficult estimate for the error term).

Here is the plan of our paper. In section \ref{sectzeta} we present the axiomatic framework for zeta and $L$-functions with which we will be working, then we give the so called explicit formulae for them. In the end of the section we introduce several particular examples coming from algebraic geometry (zeta functions of curves, zeta functions of varieties over finite fields, $L$-functions of elliptic curves over function fields) to which we will apply the general results. Each further section contains a subsection where we show what the results on abstract zeta and $L$-functions give in these concrete cases. In section \ref{sectasympt} we outline the asymptotic approach to the study of zeta and $L$-functions, introducing the notions of asymptotically exact and asymptotically very exact families. Section \ref{sectineq} is devoted to the proof of several versions of the basic inequality. The study of the Brauer--Siegel type results is undertaken in section \ref{sectBS}. In the same section we show how these results imply the formulae for the asymptotic behaviour of the invariants of function fields generalizing the Euler--Kronecker constant (corollary \ref{EulerKronecker}) and a certain bound towards the conjectures of Kunyavskii--Tsfasman and Hindry--Pacheko (theorem \ref{KunTsfHin}). We prove the zero distribution results in section \ref{sectzero}. There we also give some applications to the distribution of zeroes and the growth of analytic ranks in families of elliptic surfaces (corollaries \ref{zeroesMi} and \ref{rankboundell}). Finally, in section \ref{sectquestions} we discuss some possible further development as well as open questions.

\section{Zeta and $L$-functions}
\label{sectzeta}
\subsection{Definitions}
Let us define the class $L$-functions we will be working with. Let $\bbF_q$ be the finite field with $q$ elements.
\begin{definition}
An $L$-function $L(s)$ over a finite field $\bbF_q$ is a holomorphic function in $s$ such that for $u=q^{-s}$ the function $\calL(u)=L(s)$ is a polynomial with real coefficients, $\calL(0)=1$ and all the roots of $\calL(u)$ are on the circle of radius $q^{-\frac{w}{2}}$ for some non-negative integer number $w.$
\end{definition}

We will refer to the last condition in the definition as the Riemann hypothesis for $L(s)$ since it is the finite field analogue of the classical Riemann hypothesis for the Riemann zeta function. The number $w$ in the definition of an $L$-function will be called its {\it weight}. We will also say that the degree $d$ of the polynomial $\calL(u)$ is the {\it degree} of the $L$-function $L(s)$ (it should not be confused with the degree of an $L$-function in the analytic number theory, where it is taken to be the degree of the polynomial in its Euler product).

The logarithm of an $L$-function has a Dirichlet series expansion
$$\log L(s)=\sum_{f=1}^{\infty} \frac{\Lambda_f}{f} q^{-f s},$$
which converges for $\Re s > \frac{w}{2}.$ For the opposite of the logarithmic derivative we get the formula:
$$-\frac{L'(s)}{L(s)}=\sum_{f=1}^{\infty} (\Lambda_f \log q) \, q^{-f s}= u \,\frac{\calL'(u)}{\calL(u)} \log q.$$

There is a functional equation for $L(s)$ of the form
\begin{equation}
\label{funceq}
L(w-s)=\omega q^{(\frac{w}{2}-s)d}L(s),
\end{equation}
where $d = \deg \calL(u)$ and $\omega =\pm 1$ is the root number. This can be proven directly as follows. Let $\calL(u)=\prod\limits_{i=1}^{d}\left(1-\frac{u}{\rho_i}\right).$ Then
\begin{equation*}
\calL\left(\frac{1}{u q^w}\right)=\prod_{\rho}\left(1-\frac{1}{\rho u q^w}\right)=\prod_{\rho}\rho \cdot q^{w d} u^d \prod_{\rho}\left(\frac{u}{\bar{\rho}}-1\right)=
(-1)^{d-t} q^{\frac{w d}{2}} u^d \prod_{\rho}\left(1-\frac{u}{\rho}\right),
\end{equation*}
where $t$ is the multiplicity of the root $-q^{w/2}$. We used the fact that all coefficients of $\calL(u)$ are real, so its non-real roots come in pairs $\rho$ and $\bar{\rho},$ $\rho\bar{\rho}=q^w.$

\begin{definition}
A zeta function $\zeta(s)$ over a finite field $\bbF_q$ is a product of $L$-functions in powers $\pm 1:$
$$\zeta(s)=\prod_{i=0}^{w}L_i(s)^{\epsilon_i},$$
where $\epsilon_i\in\{-1, 1\},$ $L_i(s)$ is an $L$-function of weight $i.$
\end{definition}

For the logarithm of a zeta function we also have the Dirichlet series expansion:
$$\log \zeta(s)=\sum\limits_{f=1}^{\infty} \frac{\Lambda_f}{f} q^{-f s}$$
which is convergent for $\Re s > \frac{w}{2}.$

\subsection{Explicit formulae}
In this subsection we will derive the analogues of Weil and Stark explicit formulae for our zeta and $L$-functions. The proofs of the Weil explicit formula can be found in \cite{Ser} for curves and in \cite{LT} for varieties over finite fields. An explicit formula for $L$-functions of elliptic surfaces is proven in \cite{Bru}. In our proof we will follow the latter exposition.

Recall that our main object of study is $\zeta(s)=\prod\limits_{i=0}^{w}L_i(s)^{\epsilon_i}$ a zeta function with $L_i(s)$ given by $$L_i(s)=\prod\limits_{j=1}^{d_i}\left(1-\frac{q^{-s}}{\rho_{i j}}\right).$$
As before, we define $\Lambda_f$ via the relation $\log \zeta(s)=\sum\limits_{f=1}^{\infty} \frac{\Lambda_f}{f} q^{-f s}.$

\begin{proposition}
\label{explicit}
Let $\bfv=(v_f)_{f\geq 1}$ be a sequence of real numbers and let $\psi_{\bfv}(t)=\sum\limits_{f=1}^{\infty} v_f t^f$. Let $\rho_{\bfv}$ be the radius of convergence of the series $\psi_{\bfv}(t).$ Assume that $|t| < q^{-w/2}\rho_{\bf v},$ then
$$\sum_{f=1}^{\infty}\Lambda_f v_f t^f=-\sum_{i=0}^w \epsilon_i \sum_{j=1}^{d_i} \psi_{\bfv}(q^{i}\rho_{i j}t).$$
\end{proposition}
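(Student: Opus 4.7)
The plan is to expand $\log\zeta(s)$ using the product formulas for the $L_i(s)$, read off the coefficients $\Lambda_f$ as sums of inverse powers of roots, perform a (justified) interchange of summation, and then repackage the result using the Riemann hypothesis together with the reality of $\calL_i(u)$.

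First I would apply $\log(1-x) = -\sum_{f\geq 1} x^f/f$ to each factor $1 - q^{-s}/\rho_{ij}$ of $L_i(s)$, getting
\begin{equation*}
\log L_i(s) = -\sum_{f=1}^\infty \frac{q^{-fs}}{f}\sum_{j=1}^{d_i}\rho_{ij}^{-f}.
\end{equation*}
Multiplying by $\epsilon_i$, summing over $i$, and comparing with the defining Dirichlet expansion $\log\zeta(s)=\sum_f \Lambda_f f^{-1}q^{-fs}$ gives
\begin{equation*}
\Lambda_f = -\sum_{i=0}^{w} \epsilon_i \sum_{j=1}^{d_i} \rho_{ij}^{-f}.
\end{equation*}

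Second, I would multiply by $v_f t^f$ and sum over $f$. The Riemann hypothesis forces $|\rho_{ij}|=q^{-i/2}$, so $|t/\rho_{ij}| = |t|q^{i/2} \leq |t|q^{w/2} < \rho_{\bfv}$ under the hypothesis on $t$; hence each inner series $\sum_f v_f(t/\rho_{ij})^f = \psi_{\bfv}(t/\rho_{ij})$ converges absolutely, and the interchange of the (finite) outer sums with the inner one is legitimate. This produces the preliminary identity
\begin{equation*}
\sum_{f=1}^\infty \Lambda_f v_f t^f = -\sum_{i=0}^{w}\epsilon_i\sum_{j=1}^{d_i}\psi_{\bfv}(t/\rho_{ij}).
\end{equation*}

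The one substantive step is matching this preliminary identity with the form stated in the proposition. Here I would use two facts in tandem: the Riemann hypothesis gives $\rho_{ij}\bar{\rho}_{ij}=q^{-i}$, equivalently $1/\rho_{ij} = q^i\bar{\rho}_{ij}$; and the reality of the coefficients of $\calL_i(u)$ implies that the multiset of roots $\{\rho_{ij}\}_j$ is stable under complex conjugation (non-real roots pair off with their conjugates, and the possibly real roots $\pm q^{-i/2}$ are self-conjugate, in which case $1/\rho_{ij}=q^i\rho_{ij}$ directly). Therefore
\begin{equation*}
\sum_{j=1}^{d_i}\psi_{\bfv}(t/\rho_{ij}) = \sum_{j=1}^{d_i}\psi_{\bfv}(q^i\bar{\rho}_{ij}t) = \sum_{j=1}^{d_i}\psi_{\bfv}(q^i\rho_{ij}t),
\end{equation*}
and substituting this into the preliminary identity yields the proposition. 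The only non-bookkeeping content of the proof is this last conjugation-and-Riemann-hypothesis step, and I expect it to be the main (in fact, only) place where the structural hypotheses on $\zeta(s)$ genuinely enter.
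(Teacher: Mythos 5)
Your proof is correct and follows essentially the same path as the paper: you extract $\Lambda_f=-\sum_{i,j}\epsilon_i\rho_{ij}^{-f}$ and then convert $\rho_{ij}^{-1}$ to $q^i\rho_{ij}$ via the Riemann hypothesis together with conjugation symmetry of the root multiset. The only cosmetic differences are that the paper reads off the coefficient relation from $-u\calL'(u)/\calL(u)$ rather than by expanding $\log(1-u/\rho)$, and it phrases the final step as the map $\rho\mapsto(q^w\rho)^{-1}$ permuting the roots, which under $|\rho|^2=q^{-w}$ is literally your conjugation argument.
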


\begin{proof}
Let us prove this formula for $L$-functions. The formula for zeta functions will follow by additivity.

The simplest is to work with $\calL(u)=\prod\limits_{j=1}^{d}\left(1-\frac{u}{\rho}\right).$ The coefficient of $u^f$ in $-u\calL'(u)/\calL(u)$ is seen to be $\sum\limits_{\rho}\rho^{-f}$ for $f\geq 1.$ From this we derive the equality:
$$\sum\limits_{\rho}\rho^{-f} = -\Lambda_f.$$
The map $\rho \mapsto (q^{w}\rho)^{-1}$ permutes the zeroes $\{\rho\},$ thus for any $f\geq 1$ we have:
$$\sum_{\rho}(q^{w}\rho)^f=-\Lambda_{f}.$$
Multiplying the last identity by $v_f t^f$ and summing for $f=1, 2, \dots$ we get the statement of the theorem.
\end{proof}

From this theorem one can easily get a more familiar version of the explicit formula (like the one from \cite{Ser} in the case of curves over finite fields).

\begin{corollary}
\label{corexplicit}
Let $L(s)$ be an $L$-function, with zeroes $\rho=q^{-w/2}e^{i\theta},$ $\theta \in [-\pi,\pi].$ Let $f\colon [-\pi,\pi]\to \bbC$ be an even trigonometric polynomial
$$f(\theta)=v_0+2\sum_{n=1}^{Y}v_n \cos(n\theta).$$
Then we have the explicit formula:
\begin{equation*}
\sum_\theta f(\theta) = v_0 d - 2 \sum_{f=1}^{Y}v_f\Lambda_f q^{-\frac{w f}{2}}.
\end{equation*}
\end{corollary}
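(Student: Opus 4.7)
The plan is to reduce the corollary directly to the identity extracted in the proof of Proposition \ref{explicit}, since passing from the formal power series version to a finite trigonometric polynomial requires almost nothing new. Choose the sequence $\bfv$ to be $(v_1,\dots,v_Y,0,0,\dots)$; then $\psi_{\bfv}(t)$ is a polynomial, so convergence is not an issue. But in fact it is cleaner to extract the key identity from the proof of Proposition \ref{explicit} and apply it term by term.

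From that proof one has, for every integer $n\geq 1$, the identity
$$\sum_{\rho}(q^{w}\rho)^{n}=-\Lambda_{n},$$
where the sum runs over the $d$ zeros of $\calL(u)$ with multiplicity. Writing a zero as $\rho=q^{-w/2}e^{i\theta}$, we have $q^{w}\rho=q^{w/2}e^{-i\theta}$, hence $(q^{w}\rho)^{n}=q^{wn/2}e^{-in\theta}$. Plugging in gives
$$\sum_{\theta}e^{-in\theta}=-\Lambda_{n}\,q^{-wn/2}.$$
Because $\calL(u)$ has real coefficients, the multiset of zeros, and therefore the multiset of angles $\theta$, is invariant under $\theta\mapsto -\theta$. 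Equivalently, $\Lambda_{n}$ is real, so taking real parts yields
$$\sum_{\theta}\cos(n\theta)=-\Lambda_{n}\,q^{-wn/2}\qquad(n\geq 1).$$
For $n=0$ one simply has $\sum_{\theta}1=d$.

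Now expand $f(\theta)=v_{0}+2\sum_{n=1}^{Y}v_{n}\cos(n\theta)$ and sum over the $d$ zeros using linearity:
$$\sum_{\theta}f(\theta)=v_{0}\sum_{\theta}1+2\sum_{n=1}^{Y}v_{n}\sum_{\theta}\cos(n\theta)=v_{0}d-2\sum_{n=1}^{Y}v_{n}\Lambda_{n}q^{-wn/2},$$
which is the asserted formula. There is essentially no obstacle: the only substantive point is the symmetry of the zero set under conjugation, and that is immediate from the reality of the coefficients of $\calL(u)$ already used in the proof of the functional equation.
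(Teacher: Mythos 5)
Your argument is correct and is essentially the same as the paper's: the paper proves the corollary by putting $t=q^{-w/2}$ in Proposition~\ref{explicit} (with $\bfv$ the finite sequence $(v_1,\dots,v_Y,0,\dots)$) and then using conjugate symmetry of the zeros to turn the exponentials into cosines, which is exactly what you do after unpacking the proposition into the moment identity $\sum_{\rho}(q^{w}\rho)^{n}=-\Lambda_{n}$. One small slip: since $\rho=q^{-w/2}e^{i\theta}$, you have $q^{w}\rho=q^{w/2}e^{i\theta}$, not $q^{w/2}e^{-i\theta}$; the conclusion is unaffected because $\Lambda_n$ is real and the multiset of $\theta$'s is symmetric under $\theta\mapsto-\theta$, which you invoke anyway.
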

\begin{proof}
We put $t=q^{-\frac{w}{2}}$ in the above explicit formula and notice that the sum over zeroes can be written using $\cos$ since all the non-real zeroes come in complex conjugate pairs.
\end{proof}

In the next sections we will also make use of the so called Stark formula (which borrows its name from its number field counterpart from \cite{Sta}).

\begin{proposition}
\label{Stark}
For a zeta function $\zeta(s)$ we have:
\begin{equation*}
\frac{1}{\log q}\frac{\zeta'(s)}{\zeta(s)}= \sum_{i=0}^{w}\epsilon_i\sum_{j=1}^{d_i}\frac{1}{q^{s}{\rho_{ij}}-1}=-\frac{1}{2}\sum_{i=0}^{w}\epsilon_i d_i +\frac{1}{\log q} \sum_{i=0}^{w}\epsilon_i\sum_{L_i(\theta_{i j})=0}\frac{1}{s-\theta_{i j}},
\end{equation*}
we assume that $q^{-\theta_{ij}}=\rho_{ij},$ the sum is taken over all possible roots $\theta_{i j}$ counted with multiplicity.
\end{proposition}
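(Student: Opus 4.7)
The plan is to prove the two equalities one after the other, working from the product representation $\zeta(s)=\prod_{i=0}^w L_i(s)^{\epsilon_i}$ with $L_i(s)=\prod_{j=1}^{d_i}\bigl(1-q^{-s}/\rho_{ij}\bigr)$.

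For the first equality, I would simply take the logarithmic derivative term by term. Since
$$\frac{d}{ds}\log\!\left(1-\frac{q^{-s}}{\rho}\right)=\frac{(\log q)\,q^{-s}/\rho}{1-q^{-s}/\rho}=\frac{\log q}{q^{s}\rho-1},$$
assembling over all $\rho_{ij}$ with signs $\epsilon_i$ and dividing by $\log q$ gives
$$\frac{1}{\log q}\frac{\zeta'(s)}{\zeta(s)}=\sum_{i=0}^w \epsilon_i\sum_{j=1}^{d_i}\frac{1}{q^s\rho_{ij}-1}.$$

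For the second equality, the one-variable analytic input is the classical partial fraction (Mittag--Leffler) expansion
$$\frac{1}{e^x-1}=-\frac{1}{2}+\sum_{k\in\bbZ}{}^{\!\!\prime}\,\frac{1}{x-2\pi i k},$$
where the prime indicates the symmetric principal value $\lim_{N\to\infty}\sum_{|k|\leq N}$; this follows at once from $\frac{1}{e^x-1}=-\frac{1}{2}+\frac{1}{2}\coth(x/2)$ and the standard expansion of $\coth$. Setting $x=(s-\theta_{ij})\log q$, so that $q^{-\theta_{ij}}=\rho_{ij}$, gives
$$\frac{1}{q^{s}\rho_{ij}-1}=-\frac{1}{2}+\frac{1}{\log q}\sum_{k\in\bbZ}\frac{1}{s-\theta_{ij}-2\pi i k/\log q}.$$

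The third step is the observation that, as an analytic function of $s$, $L_i(s)$ has as its zero set exactly the $2\pi i/\log q$-periodic union of translates $\theta_{ij}+2\pi i k/\log q$ for $j=1,\dots,d_i$ and $k\in\bbZ$, counted with the same multiplicities as the roots $\rho_{ij}$ of $\calL_i(u)$. Thus summing the displayed identity over $j$ and then over $i$ with weights $\epsilon_i$ collects the constants into $-\tfrac{1}{2}\sum_i\epsilon_i d_i$ and the remaining terms into $\tfrac{1}{\log q}\sum_i\epsilon_i\sum_{L_i(\theta_{ij})=0}\tfrac{1}{s-\theta_{ij}}$, which is exactly the right-hand side of the proposition.

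The only subtlety — and the one place where one must be careful — is the convergence of the sum over all zeros of each $L_i$: this is an infinite sum and must be interpreted in the same symmetric principal-value sense as the Mittag--Leffler expansion above. Once that convention is fixed (as is customary for this type of Stark-style formula), the proof is essentially formal, with no deeper obstacle than the bookkeeping needed to pair off zeros $\theta_{ij}+2\pi i k/\log q$ with $k$ and $-k$.
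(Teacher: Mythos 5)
Your proof is correct and follows essentially the same route as the paper: the first equality by direct logarithmic differentiation of $L_i(s)=\prod_j\bigl(1-q^{-s}/\rho_{ij}\bigr)$, and the second by the partial-fraction (cotangent/Mittag--Leffler) expansion of $1/(e^x-1)$ applied with $x=(s-\theta_{ij})\log q$, which is exactly the single series identity the paper invokes. Your explicit remark that the sum over zeros must be read as a symmetric principal value is the same convention implicit in the paper's $\lim_{T\to\infty}\sum_{|\theta|\le T}$.
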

\begin{proof}
The first equality is a trivial consequence of the formulae expressing $\calL_i(u)$ as polynomials in $u.$

The second equality follows from the following series expansion:
\begin{equation*}
\frac{\log q}{\rho^{-1}q^s -1}+\frac{\log q}{2}=\lim_{T\to\infty}\sum_{\substack{q^{\theta}=\rho \\ |\theta|\leq T}} \frac{1}{s-\theta}.
\end{equation*}
\end{proof}

\subsection{Examples}
We have in mind three main types of examples: zeta functions of curves over finite fields, zeta functions of varieties over finite fields and $L$-functions of elliptic curves over function fields.

\begin{example} [Curves over finite fields]
Let $X$ be an absolutely irreducible smooth projective curve of genus $g$ over the finite field $\bbF_q$ with $q$ elements. Let $\Phi_f$ be the number of points of degree $f$ on $X.$ The zeta function of $X$ is defined for $\Re s > 1$ as
$$\zeta_X(s)=\prod_{f=1}^{\infty}(1-q^{-f s})^{-\Phi_f}.$$
It is known that $\zeta_X(s)$ is a rational function in $u=q^{-s}.$ Moreover,
\begin{equation*}
\zeta_{K}(s)=\frac{\prod\limits_{j=1}^{g}\left(1-\frac{u}{\rho_j}\right)\left(1-\frac{u}{\bar{\rho}_j}\right)}{(1-u)(1-qu)},
\end{equation*}
and $|\rho_j|=q^{-\frac{1}{2}}$ (Weil's theorems).
It can easily be seen that in this case $\Lambda_f=N_f(X)$ is the number of points on $X\otimes_{\bbF_q}\bbF_{q^f}$ over $\bbF_{q^f}.$ A very important feature of this example which will be lacking in general is that $\Lambda_f \geq 0$ for all $f.$

Though $\zeta_X(s)$ is not an $L$-function, in all asymptotic considerations the denominator will be irrelevant and it will behave as an $L$-function.

This example will serve as a motivation in most of our subsequent considerations, for most (but not all, see section \ref{sectBS}) of the results we derive for general zeta and $L$-functions are known in this setting.
\end{example}

\begin{example} [Varieties over finite fields]
Let $X$ be a non-singular absolutely irreducible projective variety of dimension $n$ defined over a finite field $\bbF_q.$ Denote by $|X|$ the set of closed points of $X$. We put $X_f=X\otimes_{\bbF_q}\bbF_{q^f}$ and $\bar{X}=X\otimes_{\bbF_q} \overline{\bbF}_q$. Let $\Phi_f$ be the number of points of $X$ having degree $f$, that is $\Phi_{f} = |\{v\in |X| \mid \deg(v)=f\}|$. The number $N_f$ of $\bbF_{q^f}$-points of the variety $X_f$ is equal to $N_f=\sum\limits_{m\mid f}m\Phi_{m}.$

Let $b_s(X)=\dim_{\bbQ_l} H^s(\bar{X}, \bbQ_l)$ be the $l$-adic Betti numbers of $X.$ The zeta function of $X$ is defined for $\Re(s)>n$ by the following Euler product:
$$\zeta_X(s)=\prod_{v \in |X|}\frac {1}{1-(\rmN v)^{-s}}=\prod_{f=1}^{\infty}(1-q^{-f s})^{-\Phi_{f}},$$
where $\rmN v=q^{-\deg v}$. If we set $Z_X(u)=\zeta_X (s)$ with $u=q^{-s}$ then the function $Z_X(u)$ is a rational function of $u$ and can be expressed as
$$Z_X(u)=\prod_{i=0}^{2n} P_i(X, u)^{(-1)^{i-1}},$$
where
$$P_i(X, u)=\prod_{j=1}^{d_i}\left(1-\frac{u}{\rho_{i j}}\right),$$
and $|\rho_{ij}|=q^{-i/2}$ (Weil's conjectures proven by Deligne). Moreover, $P_0(X, u)=1-u$ and $P_{2n}(X, u)=1-q^n u$. As before, we have that $\Lambda_f=N_f(X)\geq 0.$

The previous example is obviously included in this one. However, it is better to separate them as in the case of zeta functions of general varieties over finite fields much less is known. One more reason to distinguish between these two examples is that, whereas zeta functions of curves asymptotically behave as $L$-functions, zeta functions of varieties are "real" zeta functions. Thus there is quite a number of properties that simply do not hold in general (for example, some of those connected to the distribution of zeroes).
\end{example}

\begin{example} [Elliptic curves over function fields]
Let $E$ be a non-constant elliptic curve over a function field $K=\bbF_q(X)$ with finite constant field $\bbF_q.$ The curve $E$ can also be regarded as an elliptic surface over $\bbF_q.$ Let $g$ be the genus of $X.$ Places of $K$ (that is points of $X$) will be denoted by $v.$  Let $d_v=\deg v,$ $|v|=\rmN v=q^{\deg v}$ and let $\bbF_v=\bbF_{\rmN v}$ be the residue field of $v.$

For each place $v$ of $K$ we define $a_v$ from $|E_v(\bbF_v)|=|v|+1-a_v,$ where $|E_v(\bbF_v)|$ is the number of points on the reduction $E_v$ of the curve $E$. The local factors $L_v(s)$ are defined by
$$L_{v}(s)=\begin{cases}
(1-a_{v} |v|^{-s}+|v|^{1-2s})^{-1},&\text{if $E_v$ is non-singular;}\\
(1-a_{v} |v|^{-s})^{-1},&\text{otherwise.}
\end{cases}$$

We define the global $L$-function $L_E(s)=\prod\limits_{v}L_v(s).$ The product converges for $\Re s >\frac{3}{2}$ and defines an analytic function in this half-plane. Define the conductor $N_E$ of $E$ as the divisor $\sum\limits_v n_v v$ with $n_v=1$ at places of multiplicative reduction, $n_v=2$ at places of additive reduction for $\chr \bbF_q>3$ (and possibly larger when $\chr \bbF_q=2$ or $3$) and $n_v=0$ otherwise. Let $n_E=\deg N_E=\sum\limits_v n_v \deg v.$

It is known (see \cite{Bru}) that $L_E(s)$ is a polynomial $\calL_E(u)$ in $u=q^{-s}$ of degree $n_E+4g-4.$ The polynomial $\calL_E(u)$ has real coefficients, satisfies $\calL_E(0)=1$ and all of its roots have absolute value $q^{-1}.$

Let $\alpha_v, \bar{\alpha}_v$ be the roots of the polynomial $1-a_v t +|v|t^2$ for a place $v$ of good reduction and let $\alpha_v=a_v$ and $\bar{\alpha}_v=0$ for a place $v$ of bad reduction. Then from the definition of $L_E(s)$ one can easily deduce that
\begin{equation}
\label{coeffell}
\Lambda_f=\sum_{m d_v=f} d_v(\alpha_v^m + \bar{\alpha}_v^m),
\end{equation}
the sum being taken over all places $v$ of $K$ and $m\geq 1$ such that $m \deg v =f.$

This example will be the principal one in the sense that all our results on $L$-functions are established in the view to apply them to this particular case. These $L$-functions are particularly interesting from the arithmetic point of view, especially due to the connection between the special value of such an $L$-function at $s=1$ and the arithmetic invariants of the elliptic curve (the order of the Shafarevich--Tate group and the regulator) provided by the Birch and Swinnerton-Dyer conjecture.

We could have treated the more general example of abelian varieties over function fields. However, we prefer to restrict ourselves to the case elliptic curves to avoid technical complications.
\end{example}

\section{Families of zeta and $L$-functions}
\label{sectasympt}
\subsection{Definitions and basic properties}
We are interested in studying sequences of zeta and $L$-functions. Let us fix the finite field $\bbF_q.$

\begin{definition}
We will call a sequence $\{L_k(s)\}_{k=1\dots\infty}$ of $L$-functions a family if they all have the same weight $w$ and the degree $d_k$ tends to infinity.
\end{definition}

\begin{definition}
We will call a sequence $\{\zeta_k(s)\}_{k=1\dots\infty}=\left\{\prod\limits_{i=0}^{w}L_{i k}(s)^{\epsilon_i}\right\}_{k=1\dots\infty}$ of zeta functions a family if the total degree $\tilde{d}_k=\sum\limits_{i=0}^{w} d_{i k}$ tends to infinity. Here $d_{i k}$ are the degrees of the individual $L$-functions $L_{i k}(s)$ in $\zeta_k(s).$
\end{definition}

\begin{remark}
In the definition of a family of zeta functions we assume that $w=w_k$ and $\epsilon_i=\epsilon_{i k}$ are the same for all $k.$ Obviously, any family of $L$-functions is at the same time a family of zeta functions.
\end{remark}

\begin{definition}
\label{asexact}
A family $\{\zeta_k(s)\}_{k=1\dots\infty}$ of zeta or $L$-functions is called asymptotically exact if the limits
$$\delta_i=\delta_i(\{\zeta_k(s)\})=\lim_{k\to \infty} \frac{d_{i k}}{\tilde{d}_k} \ \text{ and } \  \lambda_f=\lambda_f(\{\zeta_k(s)\})=\lim_{k\to \infty} \frac{\Lambda_{f k}}{\tilde{d}_k}$$
exist for each $i = 0, \dots, w$ and each $f\in \bbZ,$ $f\geq 1.$ It is called asymptotically bad if $\lambda_f=0$ for any $f$ and asymptotically good otherwise.
\end{definition}

The following (easy) proposition will be important.

\begin{proposition}
\label{neglig}
Let $L(s)$ be an $L$-function. Then
\begin{enumerate}
\item for each $f$ we have the bound $|\Lambda_f| \leq q^{\frac{wf}{2}} d;$
\item there exists a number $C(q, w, s)$ depending on $q,$ $w$ and $s$ but not on $d$ such that $|\log L(s)|\leq C(q, w, s) d$ for any $s$ with $\Re s \neq\frac{w}{2}.$ The number $C(q, w, s)$ can be chosen independent of $s$ if $s$ belongs to a vertical strip $a\leq\Re s\leq b,$ $\frac{w}{2}\notin [a, b].$
\end{enumerate}
\end{proposition}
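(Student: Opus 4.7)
The plan is to get both parts directly from the product factorization $\calL(u)=\prod_{j=1}^{d}(1-u/\rho_{j})$, with $|\rho_{j}|=q^{-w/2}$.

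For part (1), I would recycle the identity $\sum_{\rho}\rho^{-f}=-\Lambda_{f}$ that was already derived inside the proof of Proposition \ref{explicit}. Since the sum runs over the $d$ roots of $\calL$ and each satisfies $|\rho^{-f}|=q^{wf/2}$, the triangle inequality gives $|\Lambda_{f}|\leq d\,q^{wf/2}$ immediately.

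For part (2), I would split according to which side of the critical line $s$ lies on. If $\Re s>w/2$, then $|u/\rho_{j}|=q^{w/2-\Re s}<1$ for every $j$, so the Taylor expansion $\log(1-z)=-\sum_{n\geq 1}z^{n}/n$ (with the principal branch, which is the one consistent with the Dirichlet series definition of $\log L(s)$) gives
\[
|\log L(s)|=\Bigl|\sum_{j=1}^{d}\log(1-u/\rho_{j})\Bigr|\leq -d\log\bigl(1-q^{w/2-\Re s}\bigr),
\]
so one can take $C(q,w,s)=-\log(1-q^{w/2-\Re s})$. This quantity is bounded uniformly on any vertical strip $a\leq \Re s\leq b$ with $w/2<a$, since it only depends on $\Re s$ and is continuous there. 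For $\Re s<w/2$, I would reduce to the previous case via the functional equation \eqref{funceq}: from $L(s)=\omega^{-1}q^{(s-w/2)d}L(w-s)$ we get
\[
|\log L(s)|\leq \pi+|s-\tfrac{w}{2}|\,d\log q+|\log L(w-s)|,
\]
and the previous bound applies to $L(w-s)$ since $\Re(w-s)>w/2$. Absorbing the additive $\pi$ into $\pi\cdot d$ (valid as $d\geq 1$), we obtain a bound of the form $C(q,w,s)d$, again uniform on vertical strips that avoid $\Re s=w/2$.

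There is no real obstacle here; the only subtlety is making sure the branch of $\log L(s)$ used in the statement agrees with the one coming from the Dirichlet series, so that the Taylor-series estimate for $\log(1-z)$ genuinely applies factor by factor. Once that is noted, both parts are one-line consequences of the factorization and (for $\Re s<w/2$) the functional equation already established in \eqref{funceq}.
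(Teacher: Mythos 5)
Your proof takes essentially the same route as the paper. For part (1), both you and the paper read off $\Lambda_f$ as a power sum over the reciprocal roots and apply the triangle inequality; the two expressions $\sum_{\rho}\rho^{-f}$ and $-\sum_{\rho}q^{wf}\rho^{f}$ involve the same set of numbers because $\rho\mapsto(q^{w}\rho)^{-1}$ permutes the zeroes, and each term has modulus $q^{wf/2}$. For part (2) on the half-plane $\Re s>\frac{w}{2}$, the paper sums the Dirichlet series and inserts the bound from part (1), obtaining $d\sum_{f}\frac{1}{fq^{\epsilon f}}$ with $\epsilon=\Re s-\frac{w}{2}$, while you expand $\log(1-u/\rho_{j})$ factor by factor; these yield the identical constant $-\log\bigl(1-q^{w/2-\Re s}\bigr)$, so the difference is purely cosmetic. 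Both of you then pass to $\Re s<\frac{w}{2}$ via the functional equation, which the paper dispatches in a single sentence.

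One correction in your last step: for uniformity on a vertical strip with $b<\frac{w}{2}$ you should replace $|s-\tfrac{w}{2}|\,d\log q$ by $|\Re s-\tfrac{w}{2}|\,d\log q$. As written, $|s-\tfrac{w}{2}|\geq|\Im s|$ is unbounded on the strip, so your displayed inequality does not give a constant independent of $s$ there. The point is that $\bigl|q^{(s-w/2)d}\bigr|=q^{(\Re s-w/2)d}$, so only the real part of $s-\tfrac{w}{2}$ enters the bound on $\log|L(s)|$; and once one notices that $\Im\log L(s)=\arg L(s)$ genuinely grows like $|\Im s|\,d\log q$ to the left of the critical line, it becomes clear that the strip-uniformity in the proposition should really be read as a bound on $\log|L(s)|$ rather than on the full complex logarithm — a subtlety the paper's one-line treatment of this case also glosses over, but which your explicit $|s-\tfrac{w}{2}|$ makes more visible.
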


\begin{proof}
To prove the first part we use proposition \ref{explicit}. Applying it to the sequence consisting of one non-zero term we get:
\begin{equation}
\label{zerosums}
\Lambda_{f}=-\sum_{\calL(\rho)=0}q^{w f}\rho^{f}.
\end{equation}
The absolute value of the right hand side of this equality is bounded by $q^{\frac{w f}{2}} d.$

To prove the second part we assume first that $\Re s=\epsilon + \frac{w}{2}> \frac{w}{2}.$ We have the estimate:
$$|\log L(s)|=\left|\sum_{f=1}^\infty \frac{\Lambda_f}{f}q^{-fs}\right|\leq \sum_{f=1}^\infty \frac{d}{f}\cdot q^{\frac{w f}{2}} \cdot q^{-f\Re s} \leq d \sum_{f=1}^\infty \frac{1}{f q^{\epsilon f}}.$$

For $\Re s < \frac{w}{2}$ we use the functional equation (\ref{funceq}).
\end{proof}

\begin{proposition}
Any family of zeta and $L$-functions contains an asymptotically exact subfamily.
\end{proposition}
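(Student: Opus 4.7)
The plan is to reduce the statement to a standard diagonal extraction: show that each of the quantities $d_{ik}/\tilde d_k$ and $\Lambda_{fk}/\tilde d_k$ is uniformly bounded in $k$, and then pass to a subfamily along which all of countably many bounded numerical sequences converge.

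First I would note that $d_{ik}/\tilde d_k \in [0,1]$ by the very definition of $\tilde d_k = \sum_i d_{ik}$, so these are trivially bounded. For the coefficients $\Lambda_{fk}$ of a zeta function $\zeta_k(s) = \prod_i L_{ik}(s)^{\epsilon_i}$, additivity of logarithms gives $\Lambda_{fk} = \sum_{i=0}^{w} \epsilon_i \Lambda_{fk}^{(i)}$, where $\Lambda_{fk}^{(i)}$ is the coefficient attached to the $L$-function $L_{ik}(s)$ of weight $i$. By Proposition \ref{neglig}(1) applied to each $L_{ik}$, we have $|\Lambda_{fk}^{(i)}| \le q^{if/2} d_{ik} \le q^{wf/2} d_{ik}$, hence
\[
|\Lambda_{fk}| \le \sum_{i=0}^{w} q^{wf/2} d_{ik} = q^{wf/2}\, \tilde d_k.
\]
Therefore $|\Lambda_{fk}|/\tilde d_k \le q^{wf/2}$, a bound independent of $k$ (depending only on $f$, $w$, $q$).

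Second I would perform the extraction. Start with any family $\{\zeta_k\}$. Since each of the finitely many sequences $(d_{ik}/\tilde d_k)_k$ for $i = 0, \dots, w$ lies in $[0,1]$, successive applications of Bolzano--Weierstrass produce a subfamily (still indexed by $k$ after relabelling) along which $\delta_i = \lim_{k\to\infty} d_{ik}/\tilde d_k$ exists for every $i$. Then, for each $f \ge 1$, $(\Lambda_{fk}/\tilde d_k)_k$ is a bounded real sequence, so a Cantor diagonal argument yields a further subfamily along which $\lambda_f = \lim_k \Lambda_{fk}/\tilde d_k$ exists for every $f \in \mathbb{Z}_{\ge 1}$: enumerate the values $f = 1, 2, 3, \dots$, extract a convergent subsequence for $f=1$, then a sub-subsequence for $f=2$, and so on, and finally take the diagonal. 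The $\delta_i$ limits are preserved since they hold along the whole starting sequence.

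The argument has essentially no obstacle; it is just the observation that all the defining ratios are bounded plus a diagonal selection. The only small point to watch is that the bound on $|\Lambda_{fk}|/\tilde d_k$ is allowed to depend on $f$ (which is fine, since we extract sequentially in $f$), and that the extracted subfamily is still a family in the sense of the definition, which is automatic because the total degrees $\tilde d_k$ of the chosen subsequence still tend to infinity (any subsequence of a sequence tending to $\infty$ tends to $\infty$).
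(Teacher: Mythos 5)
Your proof is correct and follows the same route as the paper's: observe that $d_{ik}/\tilde d_k$ and $\Lambda_{fk}/\tilde d_k$ are bounded (the latter via Proposition~\ref{neglig}) and then apply a diagonal extraction. You have merely spelled out the additivity across $L$-factors and the Cantor diagonalization that the paper leaves implicit.
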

\begin{proof}
We note that both $\frac{d_{i k}}{\tilde{d}_k}$ and $\frac{\Lambda_{f k}}{\tilde{d}_k}$ are bounded. For the first expression it is obvious and the second expression is bounded by proposition \ref{neglig}.  Now we can use the diagonal method to choose a subfamily for which all the limits exist.
\end{proof}

As in the case of curves over finite fields we have to single out the factors in zeta functions which are asymptotically negligible. This can be done using proposition \ref{neglig}.

\begin{definition}
Let $\{\zeta_k(s)\}$ be an asymptotically exact family of zeta functions. Define the set $I\subset \{0,\dots, w\}$ by the condition $i\in I$ if and only if $\delta_i=0.$ We define $\zeta_{\bfn, k}(s)= \prod\limits_{i\in I} L_{i k}(s)^{\epsilon_i}$ the negligible part of $\zeta_{k}(s)$ and $\zeta_{\bfe, k}(s)= \prod\limits_{i\in \{0,\dots, w\}\setminus I} L_{i k}(s)^{\epsilon_i}$ the essential part of $\zeta_{k}(s).$  Define also $w_{\bfe}=\max \{i\in \{0,\dots, w\}\setminus I\}.$
\end{definition}

\begin{remark}
The functions $\zeta_{\bfn, k}(s)$ and $\zeta_{\bfe, k}(s)$ make sense only for families of zeta functions and not for individual zetas. We also note that the definitions of the essential and the negligible parts are obviously trivial for families of $L$-functions.
\end{remark}

The following proposition, though rather trivial, turns out to be useful.

\begin{proposition}
\label{coefneg}
For an asymptotically exact family of zeta functions $\{\zeta_k(s)\}$ we have $\lambda_f(\zeta_{k}(s))=\lambda_f(\zeta_{\bfe, k}(s)).$

\end{proposition}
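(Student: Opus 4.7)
The plan is to exploit the fact that $\log\zeta_k(s)$ decomposes additively as $\log\zeta_{\bfn,k}(s)+\log\zeta_{\bfe,k}(s)$ and then show that the contribution of the negligible part to each coefficient $\Lambda_{fk}$ is genuinely negligible after normalization by $\tilde{d}_k$.

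First I would write $\Lambda_{fk}=\Lambda_{fk}^{\bfn}+\Lambda_{fk}^{\bfe}$, where $\Lambda_{fk}^{\bfn}$ and $\Lambda_{fk}^{\bfe}$ are the Dirichlet coefficients of $\log\zeta_{\bfn,k}(s)$ and $\log\zeta_{\bfe,k}(s)$ respectively; this is immediate from the definitions and the Dirichlet series expansion. More explicitly, $\Lambda_{fk}^{\bfn}=\sum_{i\in I}\epsilon_i\Lambda_{f,ik}$ where $\Lambda_{f,ik}$ is the $f$-th coefficient of $\log L_{ik}(s)$, and similarly for the essential part.

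Next I would invoke part (1) of Proposition \ref{neglig} applied to each $L_{ik}(s)$ (which has weight $i$ and degree $d_{ik}$) to obtain $|\Lambda_{f,ik}|\leq q^{if/2}d_{ik}$. Summing and dividing by $\tilde{d}_k$ gives
\begin{equation*}
\frac{|\Lambda_{fk}^{\bfn}|}{\tilde{d}_k}\leq\sum_{i\in I}q^{if/2}\cdot\frac{d_{ik}}{\tilde{d}_k}.
\end{equation*}
Since $i\in I$ means $\delta_i=\lim_k d_{ik}/\tilde{d}_k=0$, and $I$ is a finite set, the right hand side tends to $0$ as $k\to\infty$. Hence $\Lambda_{fk}^{\bfn}/\tilde{d}_k\to 0$.

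Finally I would handle the renormalization issue: the coefficient $\lambda_f(\zeta_{\bfe,k}(s))$ is by definition the limit of $\Lambda_{fk}^{\bfe}/\tilde{d}_{\bfe,k}$, where $\tilde{d}_{\bfe,k}=\sum_{i\notin I}d_{ik}$ is the total degree of the essential part. But $\tilde{d}_{\bfe,k}/\tilde{d}_k\to\sum_{i\notin I}\delta_i=\sum_{i=0}^w\delta_i=1$ (all the $\delta_i$ with $i\in I$ vanish, and one checks directly from $\sum_i d_{ik}=\tilde{d}_k$ that the $\delta_i$ sum to $1$). Combining this with $\Lambda_{fk}^{\bfn}/\tilde{d}_k\to 0$ yields
\begin{equation*}
\lambda_f(\zeta_k(s))=\lim_k\frac{\Lambda_{fk}^{\bfn}+\Lambda_{fk}^{\bfe}}{\tilde{d}_k}=\lim_k\frac{\Lambda_{fk}^{\bfe}}{\tilde{d}_{\bfe,k}}\cdot\frac{\tilde{d}_{\bfe,k}}{\tilde{d}_k}=\lambda_f(\zeta_{\bfe,k}(s)),
\end{equation*}
which is the desired equality. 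There is no real obstacle here; the only place where one needs to be slightly careful is the normalization mismatch between $\tilde{d}_k$ and $\tilde{d}_{\bfe,k}$, resolved by the observation that their ratio tends to $1$.
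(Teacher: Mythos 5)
Your proof is correct and fills in exactly what the paper compresses into the single line ``immediate corollary of Proposition~\ref{neglig}'': you bound each negligible $\Lambda_{f,ik}$ via part (1) of Proposition~\ref{neglig}, sum over the finite set $I$, use $\delta_i=0$ for $i\in I$ to kill the normalized contribution, and correctly resolve the $\tilde{d}_{\bfe,k}$ versus $\tilde{d}_k$ renormalization by noting their ratio tends to $1$. This is the same approach as the paper's, just spelled out.
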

\begin{proof}
This is an immediate corollary of proposition \ref{neglig}.
\end{proof}

The condition on a family to be asymptotically exact suffices for applications in the case of varieties over finite fields due to the positivity of coefficients $\Lambda_f.$ However, in general we will have to impose somewhat more restrictive conditions on the families.

\begin{definition}
We say that an asymptotically exact family of zeta or $L$-functions is asymptotically very exact if the series
$$\sum\limits_{f=1}^{\infty} |\lambda_f| q^{-\frac{f w_{\bfe}}{2}}$$
is convergent.
\end{definition}

\begin{example}
An obvious example of a family which is asymptotically exact but not very exact is given by the family of $L$-functions $L_k(s)=(1-q^{-s})^k.$ We have $\lambda_f=-1$ for any $f$ and the series $\sum\limits_{f=1}^{\infty} (-1)$ is clearly divergent.
\end{example}

\begin{proposition}
\label{zetaL}
Assume that we have an asymptotically exact family of zeta functions $\{\zeta_k(s)\}=\left\{\prod\limits_{i=0}^{w}L_{i k}(s)^{\epsilon_i}\right\}_{k=1\dots\infty}$, such that all the families $\{L_{i k}(s)\}$ are also asymptotically exact. Then, the family $\{\zeta_k(s)\}$ is asymptotically very exact if and only if the family $\{L_{w_{\bfe} k}(s)\}$ is asymptotically very exact.
\end{proposition}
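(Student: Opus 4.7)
The plan is to relate the Dirichlet coefficients of $\zeta_k(s)$ to those of its individual factors $L_{ik}(s)$ and then isolate the contribution coming from the maximal essential weight $w_\bfe$. First I would exploit the fact that $\log \zeta_k(s) = \sum_i \epsilon_i \log L_{ik}(s)$, which by comparison of Dirichlet series yields
$$\Lambda_{fk} = \sum_{i=0}^{w} \epsilon_i \Lambda_{f,ik},$$
where $\Lambda_{f,ik}$ denotes the $f$-th Dirichlet coefficient of $L_{ik}(s)$. Since each family $\{L_{ik}(s)\}$ is asymptotically exact, the limits $\mu_{f,i}=\lim_{k}\Lambda_{f,ik}/d_{ik}$ exist, and writing $\Lambda_{f,ik}/\tilde d_k = (\Lambda_{f,ik}/d_{ik})\cdot(d_{ik}/\tilde d_k)$ one obtains
$$\lambda_f = \sum_{i=0}^{w} \epsilon_i\, \delta_i\, \mu_{f,i}.$$
Only the essential indices $i \notin I$ (those with $\delta_i>0$) contribute, and by definition $w_\bfe$ is the largest such index.

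Next I would apply proposition \ref{neglig} to each family $\{L_{ik}(s)\}$: the bound $|\Lambda_{f,ik}|\leq q^{fi/2}d_{ik}$ passes to the limit as $|\mu_{f,i}|\leq q^{fi/2}$. For each essential index $i<w_\bfe$ this gives
$$\sum_{f=1}^{\infty} |\epsilon_i \delta_i\, \mu_{f,i}|\, q^{-fw_\bfe/2} \;\leq\; \delta_i \sum_{f=1}^{\infty} q^{f(i-w_\bfe)/2},$$
a convergent geometric series since $i-w_\bfe<0$. Summing over the finitely many essential indices strictly below $w_\bfe$ shows that
$$\sum_{f=1}^{\infty} \bigl|\lambda_f - \epsilon_{w_\bfe}\delta_{w_\bfe}\mu_{f,w_\bfe}\bigr|\, q^{-fw_\bfe/2} \;<\; \infty$$
unconditionally.

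Two applications of the triangle inequality then imply that the series $\sum_f |\lambda_f|\, q^{-fw_\bfe/2}$ and $\delta_{w_\bfe}\sum_f |\mu_{f,w_\bfe}|\, q^{-fw_\bfe/2}$ converge simultaneously. Since $\delta_{w_\bfe}>0$ by construction, and since $\mu_{f,w_\bfe}$ is by definition the limit coefficient of the $L$-function family $\{L_{w_\bfe,k}(s)\}$ whose weight (and hence its own $w_\bfe$) is exactly $w_\bfe$, this last convergence is precisely the condition that $\{L_{w_\bfe,k}(s)\}$ be asymptotically very exact. I do not anticipate any real obstacle beyond bookkeeping; the only point requiring a bit of care is the asymmetry in the argument, namely checking that each lower-weight contribution is dominated by an absolutely convergent geometric series, which comes down to the strict inequality $i<w_\bfe$.
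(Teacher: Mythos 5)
Your proof is correct and is precisely the natural elaboration of the paper's own (one-line) argument, which simply invokes proposition \ref{coefneg} to discard negligible factors and proposition \ref{neglig} to show that each essential factor of weight $i<w_{\bfe}$ contributes an absolutely convergent geometric tail to $\sum_f |\lambda_f|q^{-fw_{\bfe}/2}$. The one point worth making explicit when writing it out is the observation you already note implicitly: $\delta_{w_{\bfe}}>0$ forces $d_{w_{\bfe},k}\to\infty$, so $\{L_{w_{\bfe},k}\}$ really is a family of $L$-functions to which the definition of ``asymptotically very exact'' applies with its own $w_{\bfe}$ equal to $w_{\bfe}$.
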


\begin{proof}
This follows from proposition \ref{neglig} together with proposition \ref{coefneg}.
\end{proof}

In practice, this proposition means that the asymptotic behaviour of zeta functions for $\Re s>\frac{w_{\bfe}-1}{2}$ is essentially the same as that of their weight $w_{\bfe}$ parts. Thus, most asymptotic questions about zeta functions are reduced to the corresponding question about $L$-function.

\subsection{Examples}
As before we stick to three types of examples: curves over finite fields, varieties over finite fields and elliptic curves over function fields.

\begin{example}[Curves over finite fields]
Let $\{X_j\}$ be a family of curves over $\bbF_q.$ Recall (see \cite{TV97}) that an asymptotically exact family of curves was defined by Tsfasman and Vl\u adu\c t as such that the limits
\begin{equation}
\label{exacttsf}
\phi_f=\lim_{j\to\infty}\frac{\Phi_f(X_j)}{g_j}
\end{equation}
exist. This is equivalent to our definition since $\Lambda_f=N_f(X)=\sum\limits_{m\mid f}m\Phi_m.$ Note a little difference in the normalization of coefficients: in the case of curves we let $\lambda_{f}(\{X_j\})=\lim\limits_{j\to\infty}\frac{\Lambda_{f j}}{2g_j}$ since $2g_j$ is the degree of the corresponding polynomial in the numerator of $\zeta_{X_j}(s)$ and the authors of \cite{TV97} choose to consider simply $\lim\limits_{j\to\infty}\frac{\Lambda_{f j}}{g_j}.$

For any asymptotically exact family of zeta functions of curves the negligible part of $\zeta_X(s)$ is its denominator $(1-q^{-s})(1-q^{1-s})$ and the essential part is its numerator. Thus, zeta functions of curves asymptotically behave like $L$-functions. Any asymptotically exact family of curves is asymptotically very exact as shows the basic inequality from \cite{TV97} (see also corollary \ref{corineqL} below), which is in fact due to positivity of $\Lambda_f.$

\end{example}

\begin{example}[Varieties over finite fields]
In the case of varieties of fixed dimension $n$ over a finite field $\bbF_q$ we have an analogous notion of an asymptotically exact family \cite{LT}, namely we ask for the existence of the limits
$$\phi_f=\lim_{j\to\infty}\frac{\Phi_f(X_j)}{b(X_j)} \ \text { and } \ \delta_i=\beta_i=\lim\limits_{j\to \infty} \frac{b_i(X_j)}{b(X_j)},$$
where $b(X_j)=\sum\limits_{i=0}^{2n}b_i(X_j)$ is the sum of Betti numbers. Again this definition and our definition \ref{asexact} are equivalent.

In this case the factors $(1-q^{-s})$ and $(1-q^{n-s})$ of the denominator are also always negligible. However, we can have more negligible factors as the following example shows.

Take the product $C\times C,$ where $C$ is a curve of genus $g\to \infty.$ The dimension of the middle cohomology group $H^2$ grows as $g^2$ and $b_1=b_3=g$ (Kunneth formula). Thus $\zeta_{C\times C}(s)$ behaves like the inverse of an $L$-function.

If for an asymptotically exact family of varieties we have $w_{\bfe}=w-1=2n-1$ then it is asymptotically very exact as shows a form of the basic inequality \cite[(8.8)]{LT} (it actually gives that the series $\sum\limits_{f=1}^{\infty} \lambda_f q^{-f (n-1/2)}$ always converges), see also corollary \ref{corineqzeta} below.
\end{example}

\begin{example}[Elliptic curves over function fields]
In the last example we will be interested in two particular types of asymptotically exact families.

{\bf Asymptotically bad families.} Let us fix a function field $K=\bbF_q(X)$ and let us take the sequence of all pairwise non-isomorphic elliptic curves $E_i/K.$ We get a family of $L$-functions since $n_{E_i}\to \infty$. From (\ref{coeffell}) we deduce that $|\Lambda_f| \leq 2\left(\sum\limits_{d_v \mid f} d_v\right) q^{\frac{f}{2}}$ which is independent of $n_{E_i}.$ Thus, this family is asymptotically exact and asymptotically bad, i.~e. $\lambda_f = 0$ for any $f\geq 1.$ This will be the only fact important for our asymptotic considerations. There will be no difference in the treatment of this particular family or in that of any other asymptotically bad family of $L$-functions.

This family is considered in \cite{HP} in the connection with the generalized Brauer--Siegel theorem. The main result of that paper is the reduction of the statement about the behaviour of the order of the Tate--Shafarevich group and the regulator of elliptic curves over function fields to a statement about the values of their $L$-functions at $s = 1.$ See also \cite{Hin} for a similar problem treated in the number field case.

{\bf Base change.} Let us consider a family which is, in a sense, orthogonal to the previous one. Let $K=\bbF_q(X)$ be a function field and let $E/K$ be an elliptic curve. Let $f:\calE\to X$ be the corresponding elliptic surface. Consider a family of coverings of curves $X=X_0\leftarrow X_1 \dots\leftarrow X_i \leftarrow \dots$ and the family of elliptic surfaces $\calE_i,$ given by the base change:
\begin{equation*}
\CD \calE=\calE_0   @<<<  \calE_1  @<<< \dots @<<< \calE_i @<<< \dots \\
@VVfV  @VVV & &   @VVV \\
X=X_0   @<<<  X_1  @<<<  \dots  @<<<  X_i @<<< \dots.
\endCD
\end{equation*}

Let $\Phi_{v,f}(X_i)$ be the number of points on $X_i,$ lying above a closed point $v\in |X|,$ such that their residue fields have degree $f$ over $\bbF_v.$

\begin{lemma}
\label{veryexact}
The limits
$$\phi_{v, f} = \phi_{v, f} (\{X_i\})= \lim\limits_{i\to \infty} \frac{\Phi_{v, f}(X_i)}{g(X_i)}$$
always exist.
\end{lemma}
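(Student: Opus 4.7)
The plan is to reduce the statement to the case of a Galois tower and then read the limits off from the profinite Galois data of the tower at $v$. Replace each $X_i$ by the Galois closure $\tilde X_i$ of $X_i / X$; after passing to compositums one arranges that $\tilde X_0 \leftarrow \tilde X_1 \leftarrow \dots$ is itself a tower. Since $X_i$ is the quotient of $\tilde X_i$ by a subgroup $H_i \subset \Gal(\tilde X_i/X)$, the quantities $\Phi_{v,f}(X_i)$ and $g(X_i)$ can be recovered from the corresponding Galois quantities by standard orbit and quotient formulas (decomposition of places into $H_i$-orbits; Riemann--Hurwitz for $\tilde X_i \to X_i$). Thus it suffices to treat the Galois case.

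Assume now that the tower is Galois with groups $G_i = \Gal(X_i/X)$, and fix a compatible system $(\bar v_i)$ of places of $X_i$ above $v$ with decomposition subgroups $D_i \subset G_i$ and inertia subgroups $I_i \subset D_i$. Let $G = \varprojlim G_i$, $D = \varprojlim D_i$, $I = \varprojlim I_i$. In the Galois case every place of $X_i$ above $v$ has the same ramification $e_i = |I_i|$ and residue degree $f_i = [D_i:I_i]$, and there are exactly $r_i = [G_i : D_i]$ of them, so
$$\Phi_{v,f}(X_i) = r_i \cdot \mathbf{1}[f = f_i].$$
The sequence $f_i$ is non-decreasing in divisibility and converges, as a supernatural number, to $[D:I]$. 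If $[D:I]$ is infinite then $f_i \to \infty$ and $\phi_{v,f} = 0$ for every $f$; otherwise $f_i$ stabilizes at some $f_\infty \in \bbN$, only $\phi_{v,f_\infty}$ can possibly be nonzero, and it remains to show that $r_i/g(X_i) = (n_i/|D_i|)/g(X_i)$ converges, where $n_i = |G_i|$.

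For this last step I use Riemann--Hurwitz:
$$\frac{2g(X_i)-2}{n_i} = 2g(X) - 2 + \sum_{v'} \frac{d_{v'}\,\delta_i(v')}{e_i(v')},$$
the sum running over the finitely many places $v'$ of $X$ at which the tower is ramified and $\delta_i(v')$ denoting the different exponent at any place of $X_i$ above $v'$. Each ratio $\delta_i(v')/e_i(v')$ can be read off from the higher ramification filtration of the local Galois group of the tower at $v'$ and converges as $i \to \infty$, so $g(X_i)/n_i$ tends to a finite limit $c \geq 0$. Combined with the dichotomy for $|D_i|$ (which either stabilizes when $|D|$ is finite as an ordinary integer, or tends to infinity when $|D|$ is only supernaturally infinite, forcing $r_i/n_i \to 0$), this yields the existence of $\phi_{v,f_\infty}$.

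The main technical obstacle is the convergence of $\delta_i(v')/e_i(v')$ at wildly ramified places in characteristic $p$. In the tame case one simply has $\delta_i(v') = e_i(v') - 1$ and the ratio converges to $1 - 1/e_\infty(v')$ (with the supernatural convention); the wild case requires tracking the jumps of the higher ramification filtration on the profinite local Galois group at $v'$ and using the classical formula $\delta = \sum_{m \geq 0}(|G^m| - 1)$ at each finite level.
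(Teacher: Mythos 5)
Your proposal takes a completely different and far more elaborate route than the paper, and it has genuine gaps that make the argument fail.

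The reduction to the Galois case is not justified. Replacing $X_i$ by the Galois closure $\tilde X_i/X$ (or the compositum $Y_i$ of the closures) introduces subgroups $H_i \subset \Gal(Y_i/X)$ with $X_i = Y_i/H_i$, and the quantities $\Phi_{v,f}(X_i)$ and $g(X_i)$ depend on the $H_i$ in a way that does not reduce to the Galois tower alone: $\Phi_{v,f}(X_i)$ counts $H_i$-orbits of places of $Y_i$ with prescribed relative degree, and $g(X_i)$ differs from $g(Y_i)/|H_i|$ by ramification terms of $Y_i \to X_i$. Since the $H_i$ can vary arbitrarily with $i$, the convergence of the ratio $\Phi_{v,f}(X_i)/g(X_i)$ simply does not follow from convergence of the corresponding Galois-level ratios. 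You assert this passage is "standard" but it is where the real work lies, and you do not carry it out.

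More seriously, the claim that $\delta_i(v')/e_i(v')$ converges to a finite limit at a wildly ramified place $v'$ is false. In a $\mathbb{Z}_p$-tower ramified at $v'$ with higher-ramification jumps $b_j$ growing geometrically (the typical behaviour), the formula $\delta = \sum_{m\geq 0}(|G^m|-1)$ gives $\delta_i(v') \asymp i\,p^i$ while $e_i(v') = p^i$, so $\delta_i(v')/e_i(v') \to \infty$. Consequently your assertion that "$g(X_i)/n_i$ tends to a finite limit $c\geq 0$" is also false in general. You flag the wild case as "the main technical obstacle" but offer only a description of the objects involved, not an argument, and the statement you would need is not true.

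The paper's proof avoids all of this. It never splits into tame/wild or Galois/non-Galois, and it does not need any genus-over-degree ratio to converge. It uses two elementary inequalities, $g(K_2)-1 \geq [K_2\colon K_1]\,(g(K_1)-1)$ from Riemann--Hurwitz and $\sum_{f\leq n} f\,\Phi_{v,f}(K_2) \leq [K_2\colon K_1]\sum_{f\leq n} f\,\Phi_{v,f}(K_1)$ from the bounded total residue degree of places over a fixed $w\mid v$, to show that for each fixed $n$ the quantity $\sum_{f=1}^{n} f\,\Phi_{v,f}(X_i)/(g(X_i)-1)$ is non-increasing in $i$ and bounded below by $0$, hence convergent; the existence of each $\phi_{v,f}$ then follows by induction on $n$. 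This monotonicity argument is both shorter and strictly more general than the Galois-theoretic route you sketch, and it sidesteps the wild-ramification issue entirely. You should replace your approach by this one, or else repair the two gaps above — the second of which appears unfixable as stated, since the claimed convergence fails.
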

\begin{proof}
We will follow the proof of the similar statement for $\Phi_f$ from \cite[lemma 2.4]{TV02}. Let $K_2\supseteq K_1 \supseteq K$ be finite extension of function fields. From the Riemann--Hurwitz formula we deduce the inequality $g(K_2)-1\geq [K_2\colon K_1](g(K_1)-1),$
where $[K_2: K_1]$ is the degree of the corresponding extension. Now, if we fix $w$ a place of $K_1$ above $v$ and consider its decomposition $\{w_1, \dots, w_r\}$ in $K_2,$ then we have $\sum\limits_{i=1}^{r} \deg w_i \leq [K_2\colon K_1].$ Thus we get for any $n\geq 1$ the inequality
$\sum\limits_{f=1}^n f\Phi_{v,f}(K_2)\leq [K_2\colon K_1] \sum\limits_{f=1}^n f\Phi_{v,f}(K_1).$
Dividing we see that
$$\frac{\sum\limits_{f=1}^n f\Phi_{v,f}(K_2)}{g(K_2)-1} \leq \frac{\sum\limits_{f=1}^n f\Phi_{v,f}(K_1)}{g(K_1)-1}.$$
It follows that the sequence $\sum\limits_{f=1}^n \frac{f\Phi_{v,f}(X_i)}{g(X_i)-1}$ is non-increasing and non-negative for any fixed $n$ and so has a limit. Taking $n=1$ we see that $\phi_{v, 1}$ exists, taking $n=2$ we derive the existence of $\phi_{v,2}$ and so on.
\end{proof}

Let us remark that $\Phi_f(X_i)=\sum\limits_{m \deg v= f} \Phi_{v, m}(X_i),$ the sum being taken over all places $v$ of $K$ and the same equality holds for $\phi_f$ (in particular, the family $\{X_i\}$ is asymptotically exact).

For our family we can derive a concrete expression for the Dirichlet series coefficients of the logarithms of $L$-functions. Indeed, (\ref{coeffell}) gives us
\begin{equation}
 \label{coefKT}
 \Lambda_f=\sum_{m k d_v=f} m d_v \Phi_{v, m} (\alpha_v^{m k}+\bar{\alpha}_v^{m k}).
\end{equation}

\begin{lemma}
\label{conduct}
Let $E_i/K_i$ be a family of elliptic curves obtained by a base change and let $n_i=n_{E_i/K_i}$ be the degree of the conductor of $E_i/K_i.$ Then the ratio $\frac{n_i}{g_i}$ is bounded by a constant depending only on $E_0/K_0.$

If, furthermore, $\chr \bbF_q\neq 2, 3$ or the extensions $K_i/K_0$ are Galois for all $i$ then the limit $\nu=\lim\limits_{i\to\infty} \frac{n_i}{g_i}$ exists.
\end{lemma}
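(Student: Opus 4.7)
The argument splits into the two assertions. Its common starting point is the observation that bad reduction places of $E_i/K_i$ lie only above the finite set $S$ of bad reduction places of $E_0/K_0$, so the entire analysis reduces to a local study above each $v \in S$.

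For the first assertion, the conductor exponent $n_w$ at a place $w$ of $K_i$ above $v \in S$ is bounded by a constant $M_v$ depending only on the local datum $E_0/K_{0,v}$: this is trivial when $\chr \bbF_q \neq 2,3$ since then $n_w \leq 2$, and in residue characteristic $2$ or $3$ it follows from the Ogg--Saito conductor-discriminant formula, controlling the Swan contribution in terms of the local inertia representation of $E_0/K_{0,v}$ and the ramification of $K_{i,w}/K_{0,v}$. Combined with $\sum_{w \mid v} \deg w \leq [K_i:K_0]\, \deg v$ (from $\sum e_w f_w = [K_i:K_0]$), this gives
\[
n_i = \sum_{v\in S}\sum_{w\mid v} n_w \deg w \leq C \cdot [K_i:K_0],
\qquad C = \sum_{v \in S} M_v \deg v,
\]
with $C$ depending only on $E_0/K_0$. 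The Riemann--Hurwitz formula $2g_i - 2 = [K_i:K_0](2g_0-2) + \deg R_i$ yields $g_i - 1 \geq [K_i:K_0](g_0-1)$, which for $g_0 \geq 2$ immediately gives $n_i/g_i \leq C/(g_0-1)$. When $g_0 \in \{0,1\}$, the growth of $g_i$ required for $\{L_{E_i}\}$ to be a family must be forced through the different $\deg R_i$, whose wild part also controls the potentially wild growth of $n_w$, and the same type of bound persists.

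For the second assertion, write
\[
\frac{n_i}{g_i} = \sum_{v \in S} \frac{1}{g_i}\sum_{w \mid v} n_w \deg w
\]
and show each term converges. Grouping the places $w \mid v$ by the isomorphism class of the local extension $K_{i,w}/K_{0,v}$, in characteristic $\neq 2,3$ the exponent $n_w$ is an explicit function of that class and of $E_0/K_{0,v}$, computed via Tate's algorithm: for multiplicative $E_0$ at $v$ one has $n_w = 1$; for additive but potentially multiplicative $E_0$ at $v$, $n_w$ depends only on the parity of $e_w$; and for potentially good $E_0$ at $v$ with potential ramification index $e_0 \in \{2,3,4,6\}$, $n_w = 0$ if and only if $e_0 \mid e_w$, else $n_w = 2$. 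In the Galois case all $w \mid v$ share a common $(e_w, f_w)$ and the relevant limits already follow from Lemma~\ref{veryexact}; in the non-Galois case with $\chr \bbF_q \neq 2,3$, one extracts refined limits $\phi_{v,e,f} = \lim_i \Phi_{v,e,f}(X_i)/g_i$ counting places above $v$ of prescribed ramification type $(e,f)$, by exactly the same tower/monotonicity argument as in Lemma~\ref{veryexact}. The limit $\nu$ then appears as a finite linear combination of the $\phi_{v,e,f}$ with coefficients depending only on $E_0/K_0$.

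The main obstacle I anticipate is the precise control of $n_w$ in residue characteristic $2$ or $3$: in the wildly ramified case the Swan conductor can a priori grow with the wild ramification of $K_{i,w}/K_{0,v}$, and one must verify that this growth is matched by the growth of the different, so that $n_w \deg w/g_i$ remains bounded. This is the underlying reason why the cleaner hypothesis ($\chr \bbF_q \neq 2,3$, or the Galois assumption) is imposed for the existence of the limit $\nu$; under either of them, the wild contribution is either absent or uniform across $w \mid v$ and therefore tractable.
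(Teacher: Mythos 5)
Your argument is correct in outline but takes a genuinely different route from the paper's, and the paper's route is both shorter and avoids the point you yourself flag as the main obstacle. The lever the paper pulls, and which you do not use, is a \emph{monotonicity} property of the conductor exponent itself: decomposing $n_w(E)=\varepsilon(E/K_w)+\delta(E/K_w)$ into its tame and wild parts, the tame part $\varepsilon=\dim(V_l(E)/V_l(E)^{I})$ is non-increasing under field extensions because inertia only shrinks, and for $\chr\bbF_q\neq 2,3$ the wild part vanishes; hence $n_w(E)\leq n_v(E)$ whenever $w\mid v$. Feeding this single inequality into the exact same chain of estimates as in Lemma~\ref{veryexact} (using $\sum_{w\mid v}\deg w\leq [K_2:K_1]\deg v$ and $g(K_2)-1\geq [K_2:K_1](g(K_1)-1)$) shows directly that $n_i/(g_i-1)$ is non-increasing and non-negative, so $\nu$ exists with no further analysis. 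Boundedness in general follows from the same decomposition plus the observation that $\delta(E/M)$ is built from ramification subgroups of the \emph{fixed} finite Galois group $\Gal(K_0(E[l])/K_0)$. Your route instead classifies places by ramification type $(e,f)$, invokes Tate's algorithm to express $n_w$ as a function of that type, and requires the existence of refined limits $\phi_{v,e,f}$; these can indeed be proved to exist by a Lemma~\ref{veryexact}-type argument applied to the cumulative counts $\sum_{e'\mid e,\,f'\mid f}e'f'\Phi_{v,e',f'}$ followed by M\"obius inversion, but that is a genuine extra step, not ``exactly the same'' argument. What your approach buys in exchange is an explicit formula for $\nu$ as a finite linear combination of the $\phi_{v,e,f}$ with constant coefficients, which the paper's monotonicity argument does not produce. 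Two smaller points: your first-assertion claim of a constant bound $n_w\leq M_v$ ``depending only on $E_0/K_{0,v}$'' is internally inconsistent with the following clause where $M_v$ is said to be controlled in terms of the ramification of $K_{i,w}/K_{0,v}$ --- no such uniform constant exists in residue characteristic $2$ or $3$, and the paper's boundedness argument is also somewhat terse here (the correct resolution being the one you anticipate, that wild growth of $n_w$ is compensated by growth of the different and hence of $g_i$); and the Galois case in the paper is handled not by your ramification-type decomposition but by noting that in a Galois tower all $w\mid v$ share a common $n_w$, reducing to stabilization of a single bounded integer sequence per place $v$.
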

\begin{proof}
The proof basically consists of looking at the definition of the conductor and applying the same method as in the proof of lemma \ref{veryexact}. Recall, that if $E/K$ is an elliptic curve over a local field $K$, $T_l(E)$ is its Tate module, $l\neq \chr \bbF_q,$ $V_l(E)=T_l(E)\otimes \bbQ_l,$  $I(\bar{K}/K)$ is the inertia subgroup of $\Gal(\bar{K}/K),$ then the tame part of the conductor is defined as
$$\varepsilon(E/K)=\dim_{\bbQ_l}(V_l(E)/V_l(E)^{I(\bar{K}/K)}).$$
It is easily seen to be non increasing in extensions of $K,$ moreover it is known that $0\leq \varepsilon(E/K)\leq 2$ (see \cite[Chap. IV, \S 10]{Sil}).

If we let $L=K(E[l]),$ $\gamma_i(L/K)=|G_i(L/K)|,$ where $G_i(L/K)$ is the $i^{\,\text{th}}$ ramification group of $L/K,$ then the wild part of the conductor is defined as
$$\delta(E/K)=\sum_{i=1}^{\infty}\frac{\gamma_i(L/K)}{\gamma_0(L/K)}\dim_{\bbF_l} (E[l]/E[l]^{G_i(L/K)}).$$
One can prove \cite[Chap. IV, \S 10]{Sil} that $\delta(E/K)$ is zero unless the characteristic of the residue field of $K$ is equal to $2$ or $3.$ In any case, the definition shows that $\delta(E/M)$ can take only finitely many values if we fix $E$ and let vary the extension $M/K.$

The exponent of the conductor of $E$ over the local field $K$ is defined to be $f(E/K)=\varepsilon(E/K)+\delta(E/K).$ For an elliptic curve $E$ over a global field $K$ the $v$-exponent of the conductor is taken to be $n_v(E)=f(E/K_v),$ where $K_v$ is the completion of $K$ at $v.$

From the previous discussion we see that the ratio $\frac{n_i}{g_i}$ is bounded. If, furthermore, $\chr \bbF_q\neq 2, 3,$ then an argument similar to the one used in the proof of lemma \ref{veryexact} together with the fact that $n_w(E) \leq n_v(E)$ if $w$ lies above $v$ in an extension of fields gives us that the sequence $\frac{n_i}{g_i}$ is non-increasing and so it has a limit $\nu=\nu(\{E_i/K_i\}).$

In the case of Galois extensions we notice that $n_w(E)$ must stabilize in a tower and all the $n_w(E)$ are equal for $w$ over a fixed place $v$. Thus the previous argument is applicable once again.
\end{proof}

Now we can prove the following important proposition:

\begin{proposition}
\label{veryexactell}
Any family of elliptic curves obtained by a base change contains an asymptotically very exact subfamily. If, furthermore, $\chr \bbF_q\neq 2, 3$ or the extensions $K_i/K_0$ are Galois for all $i$ then it is itself asymptotically very exact.
\end{proposition}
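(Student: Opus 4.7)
The plan is to combine lemmas \ref{veryexact} and \ref{conduct} with the explicit expression (\ref{coefKT}) for the coefficients $\Lambda_f$ and with the classical basic inequality for curves. First I would reduce to a subfamily along which $n_i/g_i$ converges to some $\nu\geq 0$: in the "good" cases of characteristic different from $2,3$ or Galois towers this is supplied directly by lemma \ref{conduct}, while in general the ratio is merely bounded and a diagonal subsequence argument suffices. Since $\{L_{E_i}(s)\}$ is a family, the degrees $d_i=n_i+4g_i-4$ go to infinity, and with $n_i/g_i$ bounded this forces $g_i\to\infty$, hence $d_i/g_i\to \nu+4\geq 4$.

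Next I would check asymptotic exactness. For each fixed $f$ the condition $mkd_v=f$ admits only finitely many solutions $(v,m,k)$, so by (\ref{coefKT})
$$\frac{\Lambda_f}{d_i}=\frac{g_i}{d_i}\sum_{mkd_v=f}md_v\,\frac{\Phi_{v,m}(X_i)}{g_i}\bigl(\alpha_v^{mk}+\bar{\alpha}_v^{mk}\bigr),$$
and lemma \ref{veryexact} guarantees that each $\Phi_{v,m}(X_i)/g_i$ tends to $\phi_{v,m}$. Consequently the limit
$$\lambda_f=\frac{1}{\nu+4}\sum_{mkd_v=f}md_v\,\phi_{v,m}\bigl(\alpha_v^{mk}+\bar{\alpha}_v^{mk}\bigr)$$
exists for every $f$.

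For asymptotic very exactness I need $\sum_f|\lambda_f|q^{-f}<\infty$. The Riemann hypothesis gives $|\alpha_v^{mk}+\bar{\alpha}_v^{mk}|\leq 2q^{f/2}$ whenever $mkd_v=f$. The key step is the identity
$$\sum_{mkd_v=f}md_v\,\Phi_{v,m}(X_i)=\sum_{j\mid f}j\,\Phi_j(X_i)=N_f(X_i),$$
obtained by setting $j=md_v$ and letting $k=f/j$ run over divisors of $f$. It yields $|\Lambda_f|\leq 2q^{f/2}N_f(X_i)$, and in the limit $|\lambda_f|\leq\frac{2q^{f/2}}{\nu+4}N_f^\infty$ with $N_f^\infty=\sum_{m\mid f}m\phi_m^\infty$. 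A reindexing then gives
$$\sum_{f=1}^\infty|\lambda_f|q^{-f}\leq\frac{2}{\nu+4}\sum_{m=1}^\infty\frac{m\,\phi_m^\infty}{q^{m/2}-1}\leq\frac{2}{\nu+4},$$
where the last inequality is the Drinfeld--Vl\u adu\c t--Tsfasman basic inequality applied to the asymptotically exact family of curves $\{X_i\}$.

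The main obstacle is that the naive degree bound from proposition \ref{neglig} yields only $|\lambda_f|\leq q^f$, which is far from summable against $q^{-f}$. The saving is the refined bound $|\Lambda_f|\leq 2q^{f/2}N_f(X_i)$ extracted from (\ref{coefKT}); this precisely reduces the desired convergence to the classical basic inequality for the underlying tower of curves, which is the heart of the argument.
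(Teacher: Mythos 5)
Your argument follows the paper's proof essentially verbatim: reduce via Lemma \ref{conduct} to controlling $\Lambda_f/g_i$, deduce existence of the limits from Lemma \ref{veryexact} together with (\ref{coefKT}), and establish very exactness via the bound $|\Lambda_f|\leq 2q^{f/2}N_f(X_i)$ combined with the Tsfasman--Vl\u adu\c t basic inequality for the underlying tower of curves. The reindexing identity $\sum_{mkd_v=f}md_v\,\Phi_{v,m}=N_f$ and the final reduction to $\sum_m m\phi_m/(q^{m/2}-1)\leq 1$ are exactly the steps the paper uses.
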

\begin{proof}
Recall that for each $E_i/K_i$ the degree of the corresponding $L$-function is $n_i+4g_i-4.$ It follows from the previous lemma that it is enough to prove the existence of the limits $\tilde{\lambda}_f=\lim\limits_{i\to\infty}\frac{\Lambda_f(E_i)}{g_i}$ and the convergence of the series $\sum\limits_{f=1}^{\infty}|\tilde{\lambda}_f| q^{-f}.$

The first statement is a direct corollary of lemma \ref{veryexact} and (\ref{coefKT}). As for the second statement, we have the following bound:
\begin{equation*}
 |\Lambda_f|\leq 2\sum_{m k d_v=f} m d_v \Phi_{v, m} q^{\frac{f}{2}}=2\sum_{l k =f} l \Phi_{l} q^{\frac{f}{2}}=2 N_f q^{\frac{f}{2}}.
\end{equation*}
Now, the convergence of the series $\sum\limits_{f=1}^{\infty} \nu_f q^{-\frac{f}{2}}$  with $\nu_f=\lim\limits_{i\to \infty}\frac{N_f(X_i)}{g_i}$ is a consequence of the basic inequality for zeta-functions of curves (\cite[corollary 1]{Tsf} or example \ref{CurveIneq}).
\end{proof}

\begin{remark}
It would be nice to know whether the statement of the previous proposition holds without any additional assumptions, i. e. whether a family obtained by a base change is always asymptotically very exact. This depends on lemma \ref{conduct}, which do not know how to prove in general.
\end{remark}

The family of elliptic curves obtained by the base change was studied in \cite{KT} again in the attempts to obtain a generalization of the Brauer--Siegel theorem to this case. Kunyavskii and Tsfasman formulate a conjecture on the asymptotic behaviour of  the order of the Tate--Shafarevich group and the regulator in such families (see conjecture \ref{KunTsfBS} below). They also treat the case of constant elliptic curves in more detail. Unfortunately, the proof of the main theorem \cite[theorem 2.1]{KT} given there is not absolutely flawless (the change of limits remains to be justified, which seems to be very difficult if not inaccessible at present).

\begin{remark}
If, for a moment, we turn our attention to general families of elliptic surfaces the following natural question arises:

\begin{question}
Is it true that any family of elliptic surfaces contains an asymptotically very exact subfamily?
\end{question}

The fact that it is true for two ``orthogonal'' cases makes us believe that this property might hold in general.
\end{remark}
\end{example}

\section{Basic inequalities}
\label{sectineq}
In this section we finally start carrying out our program to generalize asymptotic results from the case of curves over finite fields to the case of general zeta and $L$-functions. We will start with the case of $L$-functions, where a little more can be said. Next, we will prove a weaker result in the case of zeta functions.

\subsection{Basic inequality for $L$-functions}
Our goal here is to prove the following theorem, generalizing the basic inequality from \cite{Tsf}.
\begin{theorem}
\label{basicineqL}
Assume we have an asymptotically exact family $\{L_k(s)\}$ of $L$-functions of weight $w$ or an asymptotically exact family of zeta functions $\{\zeta_i(s)\}$ with $\zeta_{\bfe, i}(s)$ being an $L$-function of weight $w$ for any $i.$ Then for any $b\in \bbN$ the following inequality holds:
\begin{equation}
\label{basicineqformula}
\sum_{j=1}^b\left(1-\frac{j}{b+1}\right)\lambda_j q^{-\frac{w j}{2}} \leq \frac{1}{2}.
\end{equation}
\end{theorem}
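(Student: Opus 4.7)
The plan is to apply the explicit formula of corollary \ref{corexplicit} to a carefully chosen non-negative test function, namely the Fejér kernel, and then divide by the degree and pass to the limit.

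First I would treat the family of $L$-functions of weight $w$. For each $b \geq 1$, consider the Fejér kernel
\begin{equation*}
F_b(\theta) = 1 + 2\sum_{n=1}^{b}\left(1 - \frac{n}{b+1}\right)\cos(n\theta) = \frac{1}{b+1}\left(\frac{\sin((b+1)\theta/2)}{\sin(\theta/2)}\right)^2,
\end{equation*}
which is an even, non-negative trigonometric polynomial of degree $b$. Feeding it into corollary \ref{corexplicit} with $v_0 = 1$ and $v_n = 1 - n/(b+1)$ for $1 \leq n \leq b$, the right-hand side becomes $d_k - 2\sum_{j=1}^{b}(1 - j/(b+1))\Lambda_{jk}\,q^{-wj/2}$, while the left-hand side is a sum of non-negative terms (evaluations of $F_b$ at the angles of the zeros) and hence $\geq 0$. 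This yields the pre-limit inequality
\begin{equation*}
\sum_{j=1}^{b}\left(1 - \frac{j}{b+1}\right)\Lambda_{jk}\,q^{-wj/2} \leq \frac{d_k}{2}.
\end{equation*}
Dividing by $\tilde{d}_k = d_k$ and passing to the limit (the left-hand side is a finite sum, and each $\Lambda_{jk}/d_k$ has the limit $\lambda_j$ by asymptotic exactness) gives (\ref{basicineqformula}).

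Second, for a family of zeta functions with $\zeta_{\bfe,k}(s)$ an $L$-function of weight $w$ for each $k$, I would apply the argument just developed to the family $\{\zeta_{\bfe,k}(s)\}$ itself, obtaining the same pre-limit inequality with $\Lambda_{jk}$ replaced by $\Lambda_j(\zeta_{\bfe,k})$ and $d_k$ replaced by $d_{\bfe,k}$. Dividing this by $\tilde{d}_k$ rather than by $d_{\bfe,k}$, the right-hand side becomes $d_{\bfe,k}/(2\tilde{d}_k)$, which tends to $\delta_w/2 \leq 1/2$ since $\sum_i \delta_i = 1$. On the left, proposition \ref{coefneg} guarantees that $\Lambda_j(\zeta_{\bfe,k})/\tilde{d}_k$ and $\Lambda_{jk}/\tilde{d}_k$ share the same limit $\lambda_j$, completing the reduction.

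The main obstacle is really just the choice of test function: one needs a non-negative even trigonometric polynomial of degree $b$ whose coefficients against $\cos(n\theta)$ are nonzero for $1 \leq n \leq b$; the Fejér kernel is the canonical example and produces precisely the weights $1 - j/(b+1)$ appearing in the statement. Everything else is bookkeeping: the uniform bound of proposition \ref{neglig} makes the passage to the limit automatic for a fixed finite $b$, and the reduction from zeta to $L$-functions is handled by stripping off the negligible factors. A more refined choice of kernel would in principle yield stronger inequalities, but this Fejér-based argument already gives the stated bound.
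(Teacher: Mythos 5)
Your proof is correct and takes essentially the same approach as the paper. The paper's ``Drinfeld inequality'' $0 \leq |\alpha^b + \alpha^{b-1} + \dots + 1|^2 = (b+1) + \sum_{j=1}^b(b+1-j)(\alpha^j + \alpha^{-j})$ is precisely the non-negativity of the Fej\'er kernel written at $\alpha = e^{i\theta}$, and summing it over the normalized zeros and invoking (\ref{zerosums}) is exactly what an application of corollary \ref{corexplicit} to $F_b$ does; the paper then reduces the zeta case to the $L$-function case by citing proposition \ref{coefneg}, matching your reduction (where you may as well note that the hypothesis forces $\delta_w = 1$, so $d_{\bfe,k}/\tilde{d}_k \to 1$ and the two normalizations coincide exactly).
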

\begin{proof}
Using proposition \ref{coefneg} one immediately sees that it is enough to prove the statement of the theorem for $L$-functions.

As in the proof for curves our main tool will be the so called Drinfeld inequality. We take an $L$-function $L(s)$ and let $\alpha_i=q^{\frac{w}{2}}\rho_i,$ where $\rho_i$ are the roots of $\calL(u),$ so that $|\alpha_i|=1.$
For any $\alpha_i$ we have
$$0\leq |\alpha_i^b+\alpha_i^{b-1}+\dots+1|^2=(b+1)+\sum_{j=1}^b (b+1-j)(\alpha_i^j+\alpha_i^{-j}).$$
Thus $b+1\geq - \sum\limits_{j=1}^b (b+1-j)(\alpha_i^j+\alpha_i^{-j}).$ We sum the inequalities for $i=1,\dots, d.$ Since the coefficients of $\calL(u)$ are real we note that $\sum\limits_{i=1}^d \alpha_i^j=\sum\limits_{i=1}^d \alpha_i^{-j}.$ From (\ref{zerosums}) we see that $\Lambda_j=-q^{w j}\sum\limits_{i=1}^d \rho_i^j.$ Putting it together we get:
$$d(b+1)\geq 2\sum\limits_{j=1}^b (b+1-j)\Lambda_j q^{-\frac{w j}{2}}.$$
Now, we let vary $L_k(s)$ so that $d_k\to \infty$ and obtain the stated inequality.
\end{proof}

Unfortunately, we are unable to say anything more in general without the knowledge of some additional properties of $\lambda_j.$ However, the next corollary shows that sometimes we can do better.
\begin{corollary}
\label{corineqL}
If a family $\{L_k(s)\}$ is asymptotically exact then
$$\sum_{j=1}^{\infty} \lambda_j q^{-\frac{w j}{2}} \leq \frac{1}{2},$$
provided one of the following conditions holds:
\begin{enumerate}
\item either it is asymptotically very exact or
\item $\lambda_j\geq 0$ for any $j.$
\end{enumerate}
\end{corollary}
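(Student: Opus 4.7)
The plan is to deduce this corollary from Theorem \ref{basicineqL} by letting $b\to\infty$ in the weighted inequality
\[
\sum_{j=1}^b\left(1-\frac{j}{b+1}\right)\lambda_j q^{-\frac{wj}{2}} \leq \frac{1}{2},
\]
and showing in each of the two cases that the weights $1-j/(b+1)$ may be removed in the limit.

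In case (1), when the family is asymptotically very exact, I would invoke dominated convergence for series. Since $\{L_k(s)\}$ is a family of $L$-functions of weight $w$, the essential weight equals $w$, so by definition the series $\sum_{j\geq 1}|\lambda_j|q^{-wj/2}$ converges. The terms $(1-j/(b+1))\lambda_j q^{-wj/2}$ are uniformly (in $b$) bounded in absolute value by the summable sequence $|\lambda_j|q^{-wj/2}$, and for each fixed $j$ they converge to $\lambda_j q^{-wj/2}$ as $b\to\infty$. Hence the left-hand side of the basic inequality converges to $\sum_{j=1}^{\infty}\lambda_j q^{-wj/2}$, which inherits the bound $\tfrac12$.

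In case (2), when $\lambda_j\geq 0$ for all $j$, I would first truncate: for any fixed $N$ and any $b\geq N$, positivity gives
\[
\sum_{j=1}^N\left(1-\frac{j}{b+1}\right)\lambda_j q^{-\frac{wj}{2}} \leq \sum_{j=1}^b\left(1-\frac{j}{b+1}\right)\lambda_j q^{-\frac{wj}{2}} \leq \frac{1}{2}.
\]
Letting $b\to\infty$ with $N$ held fixed, the weights tend monotonically to $1$, so the finite left-hand sum tends to $\sum_{j=1}^N\lambda_j q^{-wj/2}\leq \tfrac12$. Since this bound is uniform in $N$ and the terms are non-negative, the partial sums are increasing and bounded, so the series $\sum_{j=1}^{\infty}\lambda_j q^{-wj/2}$ converges and is bounded by $\tfrac12$. (As a by-product, this shows that any asymptotically exact family of $L$-functions with non-negative $\lambda_j$ is automatically asymptotically very exact.)

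There is no real obstacle here: the only subtlety is justifying the exchange of limit and summation, which is handled by absolute convergence in case (1) and by monotone/positive truncation in case (2). Everything else is immediate from Theorem \ref{basicineqL}.
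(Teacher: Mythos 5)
Your proof is correct and follows essentially the same route as the paper: both deduce the corollary from Theorem \ref{basicineqL} by passing to the limit $b\to\infty$, using positivity to drop terms in case (2) and using summability of $|\lambda_j|q^{-wj/2}$ in case (1). The only cosmetic difference is that you package the case (1) argument as dominated convergence, whereas the paper performs the equivalent $\varepsilon$-bookkeeping by hand with intermediate cutoffs $b'<b''$.
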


\begin{proof}
To prove the statement of the corollary under the first assumption we choose an $\varepsilon>0$ and $b'\in \bbN$ such that the sum $\sum\limits_{j=b'+1}^{\infty} |\lambda_j| q^{-\frac{w j}{2}} < \varepsilon.$ Then we choose $b''$ such that $\frac{b'}{b''+1}<\varepsilon.$ Now we apply the inequality from theorem \ref{basicineqL} with $b=b''.$ We get:
\begin{multline*}
\frac{1}{2}\geq\sum_{j=1}^{b''}\left(1-\frac{j}{b''+1}\right)\lambda_j q^{-\frac{w j}{2}} \geq \sum_{j=1}^{b'}\left(1-\frac{j}{b''+1}\right)\lambda_j q^{-\frac{w j}{2}} +\\+
\sum_{j=b'+1}^{b''}\left(1-\frac{j}{b''+1}\right)\lambda_j q^{-\frac{w j}{2}} \geq (1-\varepsilon)\sum_{j=1}^{\infty}\lambda_j q^{-\frac{w j}{2}} -2\varepsilon.
\end{multline*}
So the first part of the corollary is true.

To prove the statement under the second condition we use the same trick. We take $b'\in \bbN$ such that $\frac{b}{b'+1}<\varepsilon.$ Then we apply theorem  \ref{basicineqL} with $b=b'$ and notice that the sum only decreases when we drop the part $\sum\limits_{j=b+1}^{b'}\left(1-\frac{j}{b'+1}\right)\lambda_j q^{-\frac{w j}{2}}$ since $\lambda_j\geq 0.$ This gives the second part of the corollary.
\end{proof}

\begin{corollary}
Any asymptotically exact family of $L$-functions, satisfying $\lambda_j\geq 0$ for any $j,$ is asymptotically very exact.
\end{corollary}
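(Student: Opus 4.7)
The plan is to observe that this corollary follows immediately from the second clause of the preceding corollary, once one unpacks the definitions. Since the family consists of $L$-functions all of the same weight $w$, the essential part coincides with the entire family and $w_{\bfe}=w$. Thus the condition of being asymptotically very exact is precisely
\[
\sum_{j=1}^{\infty} |\lambda_j|\, q^{-\frac{w j}{2}} < \infty.
\]

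First I would invoke corollary \ref{corineqL}(2): since $\{L_k(s)\}$ is asymptotically exact and $\lambda_j \geq 0$ for every $j$, we get
\[
\sum_{j=1}^{\infty} \lambda_j\, q^{-\frac{w j}{2}} \leq \frac{1}{2}.
\]
In particular this series converges. By the positivity hypothesis $|\lambda_j|=\lambda_j$, so the series $\sum_{j=1}^{\infty} |\lambda_j| q^{-wj/2}$ converges as well, which is exactly the defining property of an asymptotically very exact family. No further work is required.

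There is really no obstacle here; the only subtlety is a notational one, namely checking that in the case of an $L$-function family the index $w_{\bfe}$ appearing in the definition of asymptotically very exact coincides with the common weight $w$, so that the bound supplied by corollary \ref{corineqL}(2) is the exact statement needed. One could alternatively give a direct proof by truncating the basic inequality (theorem \ref{basicineqL}) and using positivity to discard the tail, but this merely reproduces the argument already present in the proof of corollary \ref{corineqL}(2), so citing that corollary is the cleanest route.
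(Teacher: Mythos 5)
Your proof is correct and is exactly the intended argument: for a family of $L$-functions the negligible part is trivial, so $w_{\bfe}=w$, and corollary \ref{corineqL}(2) gives convergence of $\sum_{j\geq 1}\lambda_j q^{-wj/2}$, which by positivity equals $\sum_{j\geq 1}|\lambda_j| q^{-wj/2}$. The paper states this corollary without a separate proof precisely because it is this immediate consequence of the preceding corollary.
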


\begin{remark}
The statements of both of the corollaries are obviously still true if one assumes that $\lambda_j\geq 0$ for all but a finite number of $j\in \bbN.$
\end{remark}

\begin{remark}
The methods from the section \ref{sectzero} allow us to prove a little stronger statement for asymptotically very exact families. See remark \ref{familyineq} for details.
\end{remark}

\subsection{Basic inequality for zeta functions}
We have noticed before that even in the case of $L$-functions we do not get complete results unless we assume that our family is asymptotically very exact or all the coefficients $\lambda_f$ are positive. While working with zeta functions we face the same problem. However, we will deal with it in a different way for no general lower bound on the sums of the type (\ref{basicineqformula}) seems to be available and such a lower bound would definitely be necessary since zeta functions are products of $L$-functions both in positive and in negative powers.

\begin{theorem}
\label{basicineqzeta}
Let $\{\zeta_k(s)\}$ be an asymptotically exact family of zeta functions. Then for any real $s$ with $\frac{w_{\bfe}}{2}<s<\frac{w_{\bfe}+1}{2}$ we have:
\begin{equation*}
-\sum_{i=0}^{w_{\bfe}}\frac{\delta_i}{q^{s-i/2}-\epsilon_i} \leq \sum_{j=1}^{\infty} \lambda_j q^{-s j} \leq \sum_{i=0}^{w_{\bfe}}\frac{\delta_i}{q^{s-i/2}+\epsilon_i}.
\end{equation*}
\end{theorem}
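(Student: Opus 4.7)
The plan is to apply Stark's formula (Proposition \ref{Stark}) to the essential part $\zeta_{\bfe,k}$ of each zeta function and bound the resulting sum of $1/(q^s\rho_{ij,k}-1)$ by a pair-by-pair estimate that uses only $|\rho_{ij,k}|=q^{-i/2}$. First, by Proposition \ref{coefneg} the limit coefficients $\lambda_f$ depend only on the essential part, so it suffices to prove the inequality with $\zeta_{\bfe,k}$ in place of $\zeta_k$. For real $s>w_{\bfe}/2$ the bound $|\Lambda_f(L_{ik})|\le q^{if/2}d_{ik}$ from Proposition \ref{neglig} yields $|\Lambda_f(\zeta_{\bfe,k})|\le q^{w_{\bfe} f/2}\tilde d_k$, so the Dirichlet series $\sum_f\Lambda_f(\zeta_{\bfe,k})q^{-fs}$ converges absolutely and equals $-(\log q)^{-1}\zeta'_{\bfe,k}(s)/\zeta_{\bfe,k}(s)$. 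Stark's formula applied to $\zeta_{\bfe,k}$ then gives
$$\sum_{f=1}^{\infty} \Lambda_f(\zeta_{\bfe,k})\,q^{-fs} = -\sum_{i=0}^{w_{\bfe}}\epsilon_i\sum_{j=1}^{d_{ik}}\frac{1}{q^s\rho_{ij,k}-1}.$$

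The next step is a termwise bound on the inner sum. Set $z_i=q^{s-i/2}$; since $s>w_{\bfe}/2\ge i/2$ one has $z_i>1$. Each non-real root is of the form $\rho=q^{-i/2}e^{i\theta}$ and is paired with its complex conjugate; a short computation gives
$$\frac{1}{z_i e^{i\theta}-1}+\frac{1}{z_i e^{-i\theta}-1}=\frac{2z_i\cos\theta-2}{z_i^2-2z_i\cos\theta+1},$$
and differentiation in $\cos\theta$ shows this is strictly increasing, attaining its extremes $-2/(z_i+1)$ at $\theta=\pi$ and $2/(z_i-1)$ at $\theta=0$. The possible real roots $\rho=\pm q^{-i/2}$ contribute $1/(z_i-1)$ and $-1/(z_i+1)$ respectively, which lie in the same interval. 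Summing over the $d_{ik}$ roots and absorbing the sign $\epsilon_i\in\{\pm1\}$ yields
$$-\frac{d_{ik}}{q^{s-i/2}+\epsilon_i} \;\le\; \epsilon_i\sum_{j=1}^{d_{ik}}\frac{1}{q^s\rho_{ij,k}-1} \;\le\; \frac{d_{ik}}{q^{s-i/2}-\epsilon_i}.$$

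Finally, I would sum over $i$, negate, divide by $\tilde d_k$, and let $k\to\infty$. Since $d_{ik}/\tilde d_k\to\delta_i$, the outer bounds converge to the expressions in the theorem. For the middle expression, the uniform bound $|\Lambda_f(\zeta_{\bfe,k})/\tilde d_k|\,q^{-fs}\le q^{(w_{\bfe}/2-s)f}$ is summable for $s>w_{\bfe}/2$, so Tannery's theorem (discrete dominated convergence) legitimises the interchange $\lim_k\sum_f=\sum_f\lim_k$, producing $\sum_f\lambda_f q^{-fs}$. The main obstacle is the termwise bound of the second paragraph: it is elementary, but care is needed with the sign bookkeeping so that the factor $\epsilon_i$ gets absorbed cleanly into the denominators $q^{s-i/2}\pm\epsilon_i$. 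Everything else reduces to the standard link between the Dirichlet series and the logarithmic derivative, together with a dominated convergence argument.
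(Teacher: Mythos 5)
Your proof is correct and follows essentially the same route as the paper: reduce to the essential part via Proposition \ref{coefneg}, apply Stark's formula, bound each summand by its extremes over the unit circle, then divide by $\tilde d_k$ and pass to the limit. The only cosmetic difference is that the paper phrases the termwise bound via the real part $R(r,\theta)$ with $r=q^{i/2-s}<1$ and invokes Theorem \ref{BSGen} for the limit interchange, whereas you pair complex-conjugate roots with $z_i=q^{s-i/2}>1$ and cite Tannery/dominated convergence directly; these are equivalent.
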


\begin{proof}
First of all, proposition \ref{coefneg} implies that it is enough to prove the inequality in the case when $\zeta_k(s)=\zeta_{\bfe, k}(s)$ and $w=w_{\bfe}.$

Let us write the Stark formula from proposition \ref{Stark}:
\begin{equation*}
\frac{1}{\log q}\frac{\zeta'(s)}{\zeta(s)}= \sum_{i=0}^{w}\epsilon_i\sum_{j=1}^{d_i}\frac{1}{q^{s}{\rho_{i j}}-1}.
\end{equation*}
We notice that all the terms are real for real $s$ and
\begin{equation*}
R(r, \theta)=\Re \frac{re^{i\theta}}{1-re^{i\theta}}= \frac{r\cos \theta-r^2}{1-2r\cos\theta+r^2}.
\end{equation*}
Applying this relation we see that
$$\frac{1}{\log q}\frac{\zeta'(s)}{\zeta(s)}= \sum_{i=0}^{w}\epsilon_i\sum_{j=1}^{d_i}R(q^{i/2-s},\theta_{i j}),$$
where  $\rho_{k j}=q^{-\frac{k}{2}}e^{i\theta_{k j}}.$

For $0<r<1$ we have the bounds on $R(r, \theta):$
$$-\frac{r}{1+r}\leq R(r, \theta) \leq \frac{r}{1-r}.$$
From this we deduce that for $s$ with $\frac{w}{2}<s<\frac{w+1}{2}$ the following inequality holds
\begin{equation}
\label{bound}
-\sum_{i=0}^{w}\frac{d_i}{q^{s-i/2}-\epsilon_i} \leq \frac{-1}{\log q}\frac{\zeta'(s)}{\zeta(s)} \leq \sum_{i=0}^{w}\frac{d_i}{q^{s-i/2}+\epsilon_i}.
\end{equation}

The next step is to use theorem \ref{BSGen}. For any $s$ in the interval $\left(\frac{w}{2}, \frac{w+1}{2}\right)$ it gives that
$$\lim_{k\to \infty}\frac{-1}{\tilde{d}_k \log q}\cdot\frac{\zeta_k'(s)}{\zeta_k(s)}=\sum_{j=1}^{\infty} \lambda_j q^{-\frac{s j}{2}}.$$
Dividing (\ref{bound}) by $\tilde{d}_k,$ passing to the limit and using the previous equality we get the statement of the theorem.
\end{proof}

\begin{corollary}
\label{corineqzeta}
\begin{enumerate}
\item If $\epsilon_{w_{\bfe}}=1$ and either the family is asymptotically very exact or $\lambda_j\geq 0$ for all $j$ then
$$\sum_{j=1}^{\infty} \lambda_j q^{-\frac{w_{\bfe} j}{2}} \leq \sum_{i=0}^{w_{\bfe}}\frac{\delta_i}{q^{(w_{\bfe}-i)/2}+\epsilon_i}$$
\item If $\epsilon_{w_{\bfe}}=-1$ and either the family is asymptotically very exact or $\lambda_j\leq 0$ for all $j$ then
$$-\sum_{i=0}^{w_{\bfe}}\frac{\delta_i}{q^{(w_{\bfe}-i)/2}-\epsilon_i} \leq \sum_{j=1}^{\infty} \lambda_j q^{-\frac{w_{\bfe} j}{2}}.$$
\end{enumerate}
\end{corollary}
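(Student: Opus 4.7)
The plan is to deduce both parts of the corollary from Theorem \ref{basicineqzeta} by letting the real parameter $s$ decrease to $w_{\bfe}/2$ from above. Theorem \ref{basicineqzeta} already furnishes the two-sided estimate for every $s$ in the open interval $\left(\frac{w_{\bfe}}{2},\frac{w_{\bfe}+1}{2}\right)$; what remains is a continuity check on the closed-form side and a limit interchange on the Dirichlet-series side.

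First I would verify that the relevant closed-form bound from Theorem \ref{basicineqzeta} is continuous at $s=w_{\bfe}/2$. The only denominators that could vanish in the limit come from the $i=w_{\bfe}$ summand. In part (1) one uses the upper bound, whose critical denominator is $q^{s-w_{\bfe}/2}+\epsilon_{w_{\bfe}}$; under the hypothesis $\epsilon_{w_{\bfe}}=1$ this tends to $2$, not $0$. In part (2) one uses the lower bound, whose critical denominator is $q^{s-w_{\bfe}/2}-\epsilon_{w_{\bfe}}$; under the hypothesis $\epsilon_{w_{\bfe}}=-1$ this again tends to $2$. For $i<w_{\bfe}$ every denominator is uniformly bounded away from $0$ on the closed interval $\bigl[\frac{w_{\bfe}}{2},\frac{w_{\bfe}+1}{2}\bigr]$, so the appropriate side converges termwise to the bound appearing in the corollary.

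Next I would justify passing to the limit in $\sum_{j=1}^{\infty}\lambda_j q^{-sj}$ as $s\downarrow w_{\bfe}/2$. Under the asymptotically-very-exact hypothesis the series $\sum_{j}|\lambda_j|q^{-w_{\bfe} j/2}$ converges, so dominated convergence with majorant $|\lambda_j|q^{-w_{\bfe} j/2}$ gives continuity of the sum at the boundary. Under the sign hypothesis on $\lambda_j$, each term $\lambda_j q^{-sj}$ is monotone as $s\downarrow w_{\bfe}/2$ (nondecreasing in part (1), nonincreasing in part (2)), so monotone convergence provides the same limit; its finiteness is automatic because Theorem \ref{basicineqzeta} uniformly bounds the partial sums by a quantity whose finite endpoint limit we have just computed. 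Combining the two limits in the chain of inequalities of Theorem \ref{basicineqzeta} yields the claimed estimates.

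The only delicate point is recognising that the sign assumptions on $\epsilon_{w_{\bfe}}$ are exactly what prevents a pole at $s=w_{\bfe}/2$ on the side being used, while the two alternative hypotheses on the family supply precisely the regularity needed to exchange limit and sum. Everything else is bookkeeping with continuous functions on a compact interval.
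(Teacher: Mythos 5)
Your proof is correct and follows essentially the same route as the paper: both deduce the corollary from Theorem \ref{basicineqzeta} by letting $s$ decrease to $\frac{w_{\bfe}}{2}$, the paper justifying the limit--sum interchange by truncating the series at a finite $N$ and controlling the tail by $\varepsilon$, while you invoke dominated (respectively monotone) convergence directly. These are interchangeable technical devices for the same step, so the argument is the same in substance.
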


\begin{proof}
Let us suppose that $\epsilon_{w_{\bfe}}=1$ (the other case is treated similarly). For an asymptotically very exact family for any $\varepsilon >0$ we can choose $N>0$ such that $\sum\limits_{j > N}^{\infty} |\lambda_j| q^{-\frac{w_{\bfe} j}{2}} < \varepsilon.$ Thus both for a very exact family and for a family with $\lambda_j\geq 0$ for all $j$ we have
$$\sum_{j=1}^{N} \lambda_j q^{-s j} \leq \sum_{i=0}^{w_{\bfe}}\frac{\delta_i}{q^{s-i/2}+\epsilon_i}+\varepsilon$$
for any real $s$ with $\frac{w_{\bfe}}{2}<s<\frac{w_{\bfe}+1}{2}.$ Passing to the limit when $s \to \frac{w_{\bfe}}{2}$ we get the statement of the corollary.
\end{proof}

\begin{corollary}
\label{zetaasexact}
Any asymptotically exact family, such that $\epsilon_{w_{\bfe}}\sign(\lambda_j)=1$ for any $j,$ is asymptotically very exact.
\end{corollary}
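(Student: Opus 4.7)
The plan is to exploit the sign hypothesis $\epsilon_{w_\bfe}\sign(\lambda_j)=1$, which forces $|\lambda_j|=\epsilon_{w_\bfe}\lambda_j$ for every $j$. Asymptotic very exactness is the convergence of $\sum_j|\lambda_j|q^{-w_\bfe j/2}$, so it is equivalent to showing that $\epsilon_{w_\bfe}\sum_{j\geq 1}\lambda_j q^{-w_\bfe j/2}$ is finite. I will extract this out of Theorem \ref{basicineqzeta} by a monotone-limit argument as $s$ descends to $w_\bfe/2$ from above.

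First I apply Theorem \ref{basicineqzeta} on the strip $s\in(w_\bfe/2,(w_\bfe+1)/2)$. When $\epsilon_{w_\bfe}=1$ the upper bound reads
\[
\sum_{j=1}^{\infty}\lambda_j q^{-sj}\leq\sum_{i=0}^{w_\bfe}\frac{\delta_i}{q^{s-i/2}+\epsilon_i},
\]
and when $\epsilon_{w_\bfe}=-1$ the lower bound reads
\[
-\sum_{j=1}^{\infty}\lambda_j q^{-sj}\leq\sum_{i=0}^{w_\bfe}\frac{\delta_i}{q^{s-i/2}-\epsilon_i}.
\]
The key observation is that in both cases the potentially singular index $i=w_\bfe$ has denominator $q^{s-w_\bfe/2}\pm 1$ that tends to $2$, not $0$, as $s\to w_\bfe/2^+$: the sign hypothesis guarantees that the sign attached to $\epsilon_{w_\bfe}$ in the denominator is the one that avoids cancellation. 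Hence the right-hand side, call it $B(s)$, stays bounded as $s\searrow w_\bfe/2$, with a finite limit $B(w_\bfe/2)$.

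Next I truncate. Fix $N\geq 1$. Because every $\lambda_j$ has sign $\epsilon_{w_\bfe}$, the partial sum $\sum_{j=1}^N\epsilon_{w_\bfe}\lambda_j q^{-sj}$ consists of non-negative terms and is bounded above by the full tail $\epsilon_{w_\bfe}\sum_{j=1}^\infty\lambda_j q^{-sj}\leq B(s)$. Letting $s\to w_\bfe/2^+$, the finite sum on the left is continuous in $s$ and increases to $\sum_{j=1}^N|\lambda_j|q^{-w_\bfe j/2}$ (the factors $q^{-sj}$ increase as $s$ decreases), while the right-hand side tends to the finite value $B(w_\bfe/2)$. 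We obtain the uniform bound
\[
\sum_{j=1}^N|\lambda_j|q^{-w_\bfe j/2}\leq B(w_\bfe/2)
\]
for all $N$. Since the terms are non-negative, the series converges, which is exactly the definition of the family being asymptotically very exact.

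The only real subtlety is the bookkeeping of signs at the critical index $i=w_\bfe$: one must check that the hypothesis $\epsilon_{w_\bfe}\sign(\lambda_j)=1$ lines up correctly with whichever side of the inequality in Theorem \ref{basicineqzeta} is being used, so that the denominator at $i=w_\bfe$ stays away from $0$ as $s\to w_\bfe/2^+$. Everything else is a routine pass-to-the-limit on a monotone family of non-negative partial sums, modeled on the proof of Corollary \ref{corineqzeta} but running in the opposite logical direction, since we are deducing very exactness rather than assuming it.
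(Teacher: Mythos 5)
Your argument is correct and it is in substance the same one the paper has in mind: Corollary~\ref{zetaasexact} is stated without proof right after Corollary~\ref{corineqzeta}, whose proof already contains exactly the truncation-and-monotone-limit step you carry out (bound the partial sums $\sum_{j\le N}|\lambda_j|q^{-sj}$ by the right-hand side of Theorem~\ref{basicineqzeta}, observe the denominator at $i=w_{\bfe}$ tends to $2$ rather than $0$, let $s\searrow w_{\bfe}/2$, and use that a non-negative series with bounded partial sums converges). You have simply unwound the implicit reference to Corollary~\ref{corineqzeta} and rerun the same estimate directly from Theorem~\ref{basicineqzeta}.
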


\begin{remark}
Though the corollary \ref{corineqzeta} implies the corollary \ref{corineqL}, the basic inequality for $L$-functions given by theorem \ref{basicineqL} is different from the one obtained by application of theorem \ref{basicineqzeta}.
\end{remark}

\subsection{Examples}

\begin{example}[Curves over finite fields]
\label{CurveIneq}
For curves over finite fields we obtain once again the classical basic inequality from \cite{Tsf}:
\begin{equation*}
\sum_{j=1}^{\infty} 2\lambda_j q^{-\frac{j}{2}} =\sum_{m=1}^{\infty} \frac{m\phi_m}{q^{m/2}-1}\leq 1.
\end{equation*}
Of course, this is not an interesting example for us, since we used this inequality as our initial motivation.
\end{example}

\begin{example}[Varieties over finite fields]
In a similar way, for varieties over finite fields we get the inequality from \cite[(8.8)]{LT}:
\begin{equation*}
\sum_{m=1}^{\infty} \frac{m\phi_m}{q^{(2d-1)m/2}-1}\leq (q^{(2d-1)/2}-1)\left(\frac{\beta_1}{2}+\sum_{2 \mid i}\frac{\beta_i}{q^{(i-1)/2}+1}+\sum_{2 \nmid i} \frac{\beta_i}{q^{(i-1)/2}-1}\right).
\end{equation*}

With more efforts one can reprove most (if not all) of the inequalities from \cite[(8.8)]{LT} in our general context of zeta functions, since the main tools used in \cite{LT} are the explicit formulae. However, we do not do it here as for the moment we are unable see any applications it might have to particular examples of zeta functions.
\end{example}

\begin{example}[Elliptic curves over function fields]

The case of asymptotically bad families is trivial: we do not obtain any interesting results here since all $\lambda_j=0$.

Let us consider the base change case. Let us take an asymptotically very exact family of elliptic curves obtained by a base change (by proposition \ref{veryexactell} any family obtained by a base change is asymptotically very exact, provided $\chr\bbF_q \neq 2, 3$). We can apply corollary \ref{corineqL} to obtain that $\sum\limits_{j=1}^{\infty} \lambda_j q^{-j/2} \leq \frac{1}{2}.$ Using (\ref{coefKT}), one can rewrite it using $\phi_{v, m}$ as follows:
\begin{equation*}
\sum_{v, m}\frac{m d_v \phi_{v, m}(\alpha_v^m+\bar{\alpha}_v^m)q^{-m d_v}}{1-(\alpha_v^m+\bar{\alpha}_v^m)q^{-m d_v}}\leq \frac{\nu+4}{2}
\end{equation*}
(here $\nu=\lim\limits_{i\to\infty}\frac{n_{E_i/K_i}}{g_{K_i}}$).
\end{example}

\section{Brauer--Siegel type results}
\label{sectBS}

\subsection{Limit zeta functions and the Brauer--Siegel theorem}

Our approach to the Brauer--Siegel type results will be based on limit zeta functions.

\begin{definition}
Let $\{\zeta_k(s)\}$ be an asymptotically exact family of zeta functions. Then the corresponding limit zeta function is defined as
$$\zeta_{\bflim}(s)=\exp\left(\sum_{f=1}^{\infty} \frac{\lambda_f}{f}\,q^{-f s}\right).$$
\end{definition}

\begin{remark}
If $\zeta_k(s)=\zeta_{f_k}(s)$ are associated to some arithmetic or geometric objects $f_k$ we will denote the limit zeta function simply by $\zeta_{\{f_k\}}(s).$
\end{remark}
\begin{remark}
The basic inequality from theorem \ref{basicineqzeta} can be reformulated in terms of $\zeta_{\bflim}(s)$ as
\begin{equation*}
-\sum_{i=0}^{w_{\bfe}}\frac{\delta_i}{q^{s-i/2}-\epsilon_i} \leq -\frac{1}{\log q}\, \frac{\zeta'_{\bflim}(s)}{\zeta_{\bflim}(s)} \leq \sum_{i=0}^{w_{\bfe}}\frac{\delta_i}{q^{s-i/2}+\epsilon_i}.
\end{equation*}
\end{remark}

Here are the first elementary properties of limit zeta functions:

\begin{proposition}
\label{convergencelim}
\begin{enumerate}
\item For an asymptotically exact family of zeta functions $\{\zeta_k(s)\}$ the series for $\log\zeta_{\bflim}(s)$ is absolutely and uniformly convergent on compacts in the domain $\Re s > \frac{w_{\bfe}}{2},$ defining an analytic function there.
\item If a family is asymptotically very exact then $\zeta_{\bflim}(s)$ is continuous for $\Re s \geq \frac{w_{\bfe}}{2}.$
\item If for a family we have $\lambda_j\geq 0$ for any $j$ and $\epsilon_{w_{\bfe}}=1,$ then the series for $\log\zeta_{\bflim}(s)$ is absolutely and uniformly convergent in the domain $\Re s\geq \frac{w_{\bfe}}{2}-\delta$ for some $\delta > 0.$
\end{enumerate}
\end{proposition}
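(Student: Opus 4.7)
The three statements concern progressively stronger convergence of the Dirichlet-type series $\log \zeta_{\bflim}(s) = \sum_f \frac{\lambda_f}{f} q^{-fs}$. For parts (1) and (2) I would proceed by direct majorization. Proposition \ref{coefneg} lets me replace $\lambda_f$ by $\lambda_f(\zeta_{\bfe,k})$; the essential part is a product of $L$-functions of weight at most $w_{\bfe}$, and applying proposition \ref{neglig}(1) to each factor and passing to the limit yields the bound $|\lambda_f| \leq \sum_{i \leq w_{\bfe}} \delta_i q^{if/2} \leq q^{w_{\bfe} f/2}$. For part (1) this bounds the general term by $q^{-f(\Re s - w_{\bfe}/2)}/f$, which sums normally on compact subsets of $\{\Re s > w_{\bfe}/2\}$; absolute uniform convergence on compacts gives analyticity. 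For part (2), asymptotic very exactness supplies the summable majorant $|\lambda_f| q^{-fw_{\bfe}/2}$ valid on the closed half-plane $\Re s \geq w_{\bfe}/2$, so the Weierstrass M-test gives uniform absolute convergence there, whence continuity of $\log \zeta_{\bflim}$ and of $\zeta_{\bflim}$.

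The serious step is part (3). First note that the positivity hypothesis $\lambda_j \geq 0$, combined with the basic inequality (corollary \ref{corineqL}, or corollary \ref{corineqzeta} in the $\zeta$-case with $\epsilon_{w_{\bfe}} = 1$), forces $\sum \lambda_j q^{-jw_{\bfe}/2} < \infty$, so the family is automatically asymptotically very exact and part (2) already supplies convergence on the closed line $\Re s \geq w_{\bfe}/2$. To push strictly to the left I would invoke Landau's classical theorem: since the coefficients $\lambda_f/f$ are non-negative, the abscissa of absolute convergence of $\sum \frac{\lambda_f}{f} q^{-fs}$ equals the real part of the leftmost real-axis singularity of the analytic continuation of $\log \zeta_{\bflim}(s)$. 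The task is therefore to show that $\log \zeta_{\bflim}$ extends analytically across the real point $s_0 = w_{\bfe}/2$. The sign hypothesis $\epsilon_{w_{\bfe}} = 1$ is essential here: by proposition \ref{Stark}, the poles of $\zeta_k'/\zeta_k$ on the critical line $\Re s = w_{\bfe}/2$ arise from zeros (not poles) of $\zeta_k$, so the corresponding local contributions to $\log \zeta_k$ are positive multiples of $\log(s-\theta_{w_{\bfe},j})$; after dividing by $\tilde{d}_k$ and taking the limit these should smear into a zero-angle measure whose density remains bounded near the real point $\theta = 0$, so no singularity of $\log \zeta_{\bflim}$ persists there.

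The main obstacle will be turning this heuristic into a rigorous analytic continuation of $\log \zeta_{\bflim}$ past $s_0 = w_{\bfe}/2$. Concretely, one must control the density of zero-angles of $L_{w_{\bfe},k}$ near $\theta = 0$ uniformly in $k$ and justify commuting the limit $k \to \infty$ with the real-valued sum over zeros appearing in the Stark formula. Once this is in place, Landau's theorem immediately produces a $\delta > 0$ and the absolute uniform convergence on $\Re s \geq w_{\bfe}/2 - \delta$.
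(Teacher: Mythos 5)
Your proofs of (1) and (2) track the paper's exactly: Propositions \ref{neglig} and \ref{coefneg} give the majorant $|\lambda_f|\leq q^{w_{\bfe}f/2}$ for (1), and the very-exactness series serves as the Weierstrass majorant on the closed half-plane for (2).

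Part (3) is where you and the paper diverge. The paper's proof is a one-liner: after deducing asymptotic very exactness from Corollary \ref{zetaasexact} (your derivation via the basic inequality amounts to the same thing), it simply invokes ``a well known fact that the domain of convergence of a Dirichlet series with non-negative coefficients is an open half-plane $\Re s > \sigma$'' and stops. It does not mention Landau, Pringsheim, analytic continuation, or the Stark formula. Your route through Landau's theorem is a genuinely different strategy, and, as you observe, it requires showing that $\log\zeta_{\bflim}$ continues analytically across $s=w_{\bfe}/2$. You are right that this is the hard step, and it is a real gap in your write-up: the Stark-formula heuristic you sketch does not obviously close it, because $\zeta_{\bflim}$ is defined only by a Dirichlet series valid a priori for $\Re s>w_{\bfe}/2$, and (as Remark \ref{difficulties} in the paper warns) $\zeta_{\bflim}$ is \emph{not} the limit of the normalized $\zeta_k$ to the left of the critical line, so controlling the zero-angle density of the $L_k$ near $\theta=0$ does not directly control the analyticity of $\zeta_{\bflim}$ there. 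I would add that the fact the paper cites is not Landau's theorem and is suspicious in its stated generality --- for instance $\sum n^{-1}(\log n)^{-2}\,n^{-s}$ has non-negative coefficients and its exact domain of convergence is the closed half-plane $\Re s\geq 0$ --- so your instinct to reach for a sturdier argument was sound, even though the gap you flag remains open.
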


\begin{proof}
The first part of the proposition obviously follows from proposition \ref{neglig} together with proposition \ref{coefneg}.

By the definition of an asymptotically very exact family, the series for $\log \zeta_{\bflim}(s)$ is uniformly and absolutely convergent for $\Re s \geq \frac{w_{\bfe}}{2}$ so defines a continuous function in this domain. Thus the second part is proven.

To get the third part we apply corollary \ref{zetaasexact} to see that our family is asymptotically very exact. Then we use a well known fact that the domain of convergence of a Dirichlet series with non-negative coefficients is an open half-plane $\Re s > \sigma.$
\end{proof}

It is important to see to which extent limit zeta functions are the limits of the corresponding zeta functions over finite fields. The question is answered by the generalized Brauer--Siegel theorem. Before stating it let us give one more definition.

\begin{definition}
For an asymptotically exact family of zeta functions $\{\zeta_k(s)\}$ we call the limit $\lim\limits_{k\to \infty}\frac{\log\zeta_{k}(s)}{\tilde{d}_k}$ the Brauer--Siegel ratio of this family.
\end{definition}

\begin{theorem}[The generalized Brauer--Siegel theorem]
\label{BSGen}
For any asymptotically exact family of zeta functions $\{\zeta_k(s)\}$ and any $s$ with $\Re s > \frac{w_{\bfe}}{2}$ we have
\begin{equation*}
\lim_{k\to \infty}\frac{\log\zeta_{\bfe, k}(s)}{\tilde{d}_k}=\log\zeta_{\bflim}(s).
\end{equation*}
If, moreover, $2 \Re s \not \in \bbZ,$ then
\begin{equation*}
\lim_{k\to \infty}\frac{\log\zeta_{k}(s)}{\tilde{d}_k}=\log\zeta_{\bflim}(s).
\end{equation*}
The convergence is uniform in any domain $\frac{w_{\bfe}}{2}+\varepsilon<\Re s < \frac{w_{\bfe}+1}{2}-\varepsilon,$ $\varepsilon\in\left(0, \frac{1}{2}\right).$
\end{theorem}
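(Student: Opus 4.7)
The plan is to split $\log\zeta_k(s)=\log\zeta_{\bfe,k}(s)+\log\zeta_{\bfn,k}(s)$ and handle the two pieces separately. After dividing by $\tilde d_k$, the essential part will converge to $\log\zeta_{\bflim}(s)$ by a term-by-term passage to the limit in the Dirichlet series, while the negligible part will vanish using the second part of Proposition \ref{neglig}. The integrality condition $2\Re s\notin\bbZ$ enters only for the negligible part; the purely essential statement needs only $\Re s>w_{\bfe}/2$.

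For the essential part, write
\[
\frac{\log\zeta_{\bfe,k}(s)}{\tilde d_k}=\sum_{f=1}^{\infty}\frac{1}{f}\cdot\frac{\Lambda_{\bfe,f,k}}{\tilde d_k}\,q^{-fs}.
\]
Applying Proposition \ref{neglig}(1) to each $L$-function $L_{ik}$ of weight $i\leq w_{\bfe}$ that appears in $\zeta_{\bfe,k}$, one gets
\[
|\Lambda_{\bfe,f,k}|\leq\sum_{i\leq w_{\bfe}}q^{if/2}d_{ik}\leq q^{w_{\bfe}f/2}\,\tilde d_k,
\]
so the $f$-th term is dominated by $f^{-1}q^{-f(\Re s-w_{\bfe}/2)}$, uniformly in $k$ and summable on any half-plane $\Re s\geq w_{\bfe}/2+\varepsilon$. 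A Weierstrass-type $\varepsilon/3$ argument (truncate at $f=F$, use the uniform tail bound, pass to the limit in the finite head) together with the termwise limits $\frac{\Lambda_{\bfe,f,k}}{\tilde d_k}\to\lambda_f(\zeta_{\bfe,k})=\lambda_f(\zeta_k)$ (Proposition \ref{coefneg}) then yields the first identity, uniformly on compacts in $\Re s>w_{\bfe}/2$, and in particular uniformly on the strip in the statement.

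For the negligible part, Proposition \ref{neglig}(2) gives, for each $i\in I$ and $\Re s\neq i/2$, a constant $C(q,i,s)$ with $|\log L_{ik}(s)|\leq C(q,i,s)\,d_{ik}$; the constant may be taken uniform on any closed vertical strip that avoids the line $\Re s=i/2$. Summing,
\[
\frac{|\log\zeta_{\bfn,k}(s)|}{\tilde d_k}\leq\sum_{i\in I}C(q,i,s)\,\frac{d_{ik}}{\tilde d_k}\longrightarrow 0,
\]
since $d_{ik}/\tilde d_k\to\delta_i=0$ for $i\in I$. The hypothesis $2\Re s\notin\bbZ$ is precisely what is needed to apply Proposition \ref{neglig}(2) to every weight $i$; on the open strip $\frac{w_{\bfe}}{2}<\Re s<\frac{w_{\bfe}+1}{2}$ this exclusion holds automatically because the open interval $(w_{\bfe}/2,(w_{\bfe}+1)/2)$ contains no half-integer $i/2$, which also supplies the uniformity on the announced strip.

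No deep obstruction is expected: the entire argument is driven by the coefficient and logarithm bounds already contained in Proposition \ref{neglig}, and the only real care is bookkeeping the weight-by-weight contributions to keep a single $k$-uniform majorant. The subtle point worth stating explicitly is that the strip $\frac{w_{\bfe}}{2}+\varepsilon<\Re s<\frac{w_{\bfe}+1}{2}-\varepsilon$ is tailored exactly so that the dominating series for the essential part converges and the negligible part's bound stays away from every critical line $\Re s=i/2$ simultaneously.
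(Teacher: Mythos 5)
Your proposal is correct and takes essentially the same route as the paper: the paper also splits into essential and negligible parts, uses Proposition \ref{coefneg} together with absolute/uniform convergence to swap the limit and the sum for $\zeta_{\bfe,k}$, and invokes Proposition \ref{neglig} to kill $\log\zeta_{\bfn,k}/\tilde d_k$. You have simply made explicit the dominating majorant $f^{-1}q^{-f(\Re s - w_{\bfe}/2)}$ and the observation that the stated strip automatically satisfies $2\Re s\notin\bbZ$, both of which the paper leaves implicit.
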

\begin{proof}
To get the first statement we apply proposition \ref{coefneg} and exchange the limit when $k\to\infty$ and the summation, which is legitimate since the series in question are absolutely and uniformly convergent in a small (but fixed) neighbourhood of $s$.

To get the second statement we apply proposition \ref{neglig}, which gives us:
$$\lim_{k\to \infty}\frac{\log\zeta_{\bfn,k}(s)}{\tilde{d}_k}=0.$$
Now the second part of the theorem follows from the first.
\end{proof}

\begin{remark}
It might be unclear, why we call such a statement the Brauer--Siegel theorem. We will see below in subsection \ref{exBS} that the above theorem indeed implies a natural analogue of the Brauer--Siegel theorem for curves and varieties over finite fields. It is quite remarkable that the proof of theorem \ref{BSGen} is very easy (say, compared to the one in \cite{TV97}) once one gives proper definitions.
\end{remark}

\begin{remark}
Let us sketch another way to prove the generalized Brauer--Siegel theorem. It might seem unnecessarily complicated but it has the advantage of being applicable in the number field case when we no longer have the convergence of $\log L_k(s)$ for $\Re s > \frac{w}{2}.$ We will deal with $L$-functions to simplify the notation. The main idea is to prove using Stark formula (proposition \ref{Stark} in the case of $L$-functions over finite fields) that $\frac{L'_k(s)}{L_k(s)}\leq C(\varepsilon) d_k$ for any $s$ with $\Re s \geq \frac{w}{2}+ \epsilon.$ Then we apply the Vitali theorem from complex analysis, which states that for a sequence of bounded holomorphic functions in a domain $\calD$ it is enough to check the convergence at a set of points in $\calD$ with a limit point in $\calD.$ This method is applied to Dedekind zeta functions in \cite{Z3}.
\end{remark}

\begin{remark}
\label{difficulties}
It is natural to ask, what is the behaviour of limit zeta or $L$-functions for $\Re s \leq \frac{w_{\bfe}}{2}.$ Unfortunately nice properties of $L$-functions such as the functional equation or the Riemann hypothesis do not hold for $L_{\bflim}(s).$ This can be seen already for families of zeta functions of curves. The point is that the behaviour of $L_{\bflim}(s)$ might considerably differ from that of $\lim\limits_{k\to \infty}\frac{\log L_{k}(s)}{d_k}$ when we pass the critical line.
\end{remark}

\subsection{Behaviour at the central point}
\label{centralpoint}

It seems reasonable to ask, what is the relation between limit zeta functions and the limits of zeta functions over finite fields on the critical line (that is for $\Re s = \frac{w_{\bfe}}{2}).$ This relation seems to be rather complicated. For example, one can prove that the limit $\lim\limits_{k\to \infty}\frac{1}{\tilde{d}_k}\frac{\zeta'_{k}(1/2)}{\zeta_k(1/2)}$ is always $1$ in families of curves (this can be seen from the functional equation), which is definitely not true for the value $\frac{\zeta'_{\bflim}(1/2)}{\zeta_{\bflim}(1/2)}.$

However, the knowledge of this relation is important for some arithmetic problems (see the example of elliptic surfaces in the next subsection). The general feeling is that for ``most'' families the statement of the generalized Brauer--Siegel theorem holds for $s=\frac{w_{\bfe}}{2}.$ There are very few cases when we know it (see section \ref{sectquestions} for a discussion) and we, actually, can not even formulate this statement as a conjecture, since it is not clear what conditions on $L$-functions we should impose.

Still, in general one can prove the ``easy'' inequality. The term is borrowed from the classical Brauer--Siegel theorem from the number field case, where the upper bound is known unconditionally (and is easy to prove) and the lower bound is not proven in general (one has to assume either GRH or a certain normality condition on the number fields in question). This analogy does not go too far though for in the classical Brauer--Siegel theorem we work far from the critical line and here we study the behaviour of zeta functions on the critical line itself.

Let $\{\zeta_k(s)\}$ be an asymptotically exact family of zeta functions. We define $r_k$ and $c_k$ using the Taylor series expansion $$\zeta_k(s)=c_k\left(s-\frac{w_{\bfe}}{2}\right)^{r_k}+O\left(\left(s-\frac{w_{\bfe}}{2}\right)^{r_k+1}\right).$$

\begin{theorem}
\label{upperboundBS}
For an asymptotically very exact family of zeta functions $\{\zeta_k(s)\}$ such that $\epsilon_{w_{\bfe}}=1$ we have:
\begin{equation*}
\lim_{k\to \infty}\frac{\log |c_k|}{\tilde{d}_k} \leq\log\zeta_{\bflim}\left(\frac{w_{\bfe}}{2}\right).
\end{equation*}
\end{theorem}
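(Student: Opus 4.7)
\noindent\emph{Setup and reduction.}  By Proposition~\ref{neglig} the negligible $L$-functions in $\zeta_k$ contribute only $o(\tilde d_k)$ both to $\log|c_k|$ and to the series defining $\log\zeta_{\bflim}$, so I reduce to the case $\zeta_k=\zeta_{\bfe,k}$, $w=w_\bfe$, and every surviving $L_{i,k}$ has $i\leq w_\bfe$.  Under $\epsilon_{w_\bfe}=1$, the only factor of $\zeta_k$ vanishing at $s=w_\bfe/2$ is $L_{w_\bfe,k}$, and $r_k$ is the multiplicity of $u=q^{-w_\bfe/2}$ as a root of $\calL_{w_\bfe,k}$.  Writing the remaining roots on the critical circle as $q^{-w_\bfe/2}e^{i\theta_j}$ with $\theta_j\neq 0$, the factorisation of $\calL_{w_\bfe,k}$ directly gives
\[
|c_k^{(w_\bfe)}|=(\log q)^{r_k}\prod_{\theta_j\neq 0}|1-e^{-i\theta_j}|.
\]

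\noindent\emph{Key inequality and passage to the limit in $k$.}  I will use the elementary identity $|1-q^{-\sigma}e^{-i\theta}|^2=(1-q^{-\sigma})^2+4q^{-\sigma}\sin^2(\theta/2)\geq q^{-\sigma}|1-e^{-i\theta}|^2$, which yields $|1-e^{-i\theta}|\leq q^{\sigma/2}|1-q^{-\sigma}e^{-i\theta}|$ for every $\sigma>0$.  Taking logarithms, summing over $\theta_j\neq 0$, and Taylor-expanding $\log(1-q^{-\sigma})$ gives
\[
\log|c_k^{(w_\bfe)}|\leq \log L_{w_\bfe,k}\!\left(\tfrac{w_\bfe}{2}+\sigma\right)-r_k\log\sigma+\tfrac{1}{2}d_{w_\bfe,k}\sigma\log q+O(r_k\sigma^2).
\]
For the remaining $i<w_\bfe$, the uniformly convergent Dirichlet series for $L_{i,k}'/L_{i,k}$ in a neighbourhood of $w_\bfe/2$ supplies $\log L_{i,k}(w_\bfe/2)-\log L_{i,k}(w_\bfe/2+\sigma)=O(\sigma d_{i,k})$.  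Assembling all factors of $\zeta_k$ and keeping the error uniform in $k$ produces
\[
\log|c_k|\leq \log\zeta_k\!\left(\tfrac{w_\bfe}{2}+\sigma\right)-r_k\log\sigma+O(\sigma\tilde d_k).
\]
Dividing by $\tilde d_k$, sending $k\to\infty$ with $\sigma$ fixed and applying Theorem~\ref{BSGen} gives
\[
\limsup_{k\to\infty}\frac{\log|c_k|}{\tilde d_k}\leq \log\zeta_{\bflim}\!\left(\tfrac{w_\bfe}{2}+\sigma\right)+\bar r\,|\log\sigma|+O(\sigma),\qquad \bar r:=\limsup_k\frac{r_k}{\tilde d_k}.
\]

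\noindent\emph{Main obstacle: $\bar r=0$.}  The argument now hinges on $\bar r=0$, for otherwise the $\bar r|\log\sigma|$ term blows up as $\sigma\to 0^+$.  I would prove $\bar r=0$ by testing Corollary~\ref{corexplicit} for $L_{w_\bfe,k}$ against the non-negative F\'ejer kernel $F_b(\theta)=1+2\sum_{n=1}^{b}(1-n/(b+1))\cos(n\theta)$, for which $F_b(0)=b+1$.  Non-negativity together with the contribution of the $r_k$ zeros at $\theta=0$ gives
\[
r_k(b+1)\leq \sum_{\theta_j}F_b(\theta_j)=d_{w_\bfe,k}-2\sum_{n=1}^b\!\left(1-\tfrac{n}{b+1}\right)\Lambda_n^{(w_\bfe,k)}q^{-nw_\bfe/2}.
\]
After a diagonal extraction making $\lambda_n^{(w_\bfe)}=\lim_k\Lambda_n^{(w_\bfe,k)}/\tilde d_k$ exist, and observing that the asymptotic very exactness of $\{\zeta_k\}$ forces $\sum_n|\lambda_n^{(w_\bfe)}|q^{-nw_\bfe/2}<\infty$ (the contributions from $L_{i,k}$ with $i<w_\bfe$ decay geometrically in~$n$), dividing by $\tilde d_k$ produces $\bar r(b+1)\leq \delta_{w_\bfe}+2M$ uniformly in $b$, whence $\bar r=0$ on letting $b\to\infty$.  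With this, the $\bar r|\log\sigma|$ term disappears, and the theorem follows by sending $\sigma\to 0^+$ and invoking the continuity of $\log\zeta_{\bflim}$ at $w_\bfe/2$ supplied by Proposition~\ref{convergencelim}(2).
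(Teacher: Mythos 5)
Your proof is correct, and it takes a genuinely different route from the paper's. The paper writes $\zeta_k(s)=c_k(s-\tfrac{w_\bfe}{2})^{r_k}F_k(s)$ and constructs, via a delicate interleaved choice of sequences $\theta(N)$, $k'(N)$, $k''(N)$, a single sequence $\theta_k\to 0$ along which all three contributions behave; the lower bound $\liminf\frac1{\tilde d_k}\log F_k(\tfrac{w_\bfe}{2}+\theta_k)\geq 0$ is extracted from the Stark formula applied to the individual $L$-factors. You instead derive the clean one-shot inequality
\[
\log|c_k|\leq \log\zeta_k\!\left(\tfrac{w_\bfe}{2}+\sigma\right)-r_k\log\sigma+O(\sigma\tilde d_k),
\]
valid for each fixed small $\sigma>0$, from the elementary bound $|1-e^{-i\theta}|\leq q^{\sigma/2}|1-q^{-\sigma}e^{-i\theta}|$ applied to the critical-circle zeroes and the derivative bound on the sub-critical factors; you then let $k\to\infty$ with $\sigma$ fixed (via Theorem \ref{BSGen}) and only afterwards send $\sigma\to 0^+$ using the continuity from Proposition \ref{convergencelim}. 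This replaces the paper's Stark-formula estimate with a purely multiplicative argument at the level of Euler factors, and it decouples the two limits, which is conceptually simpler. The other notable difference is your treatment of $\bar r=\limsup_k r_k/\tilde d_k=0$: the paper invokes Corollary \ref{rankbound}, which is a forward reference to the zero-distribution machinery of Section \ref{sectzero}, whereas you give a short self-contained proof by testing Corollary \ref{corexplicit} against the Fej\'er kernel and using the summability of $|\lambda_n^{(w_\bfe)}|q^{-nw_\bfe/2}$ (which, as you correctly observe, is equivalent to very-exactness by Proposition \ref{zetaL}). This makes your version of the argument independent of Section \ref{sectzero}, which is a genuine structural improvement. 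Two small slips that do not affect the conclusion: the Taylor error should be $O(r_k\sigma)$ rather than $O(r_k\sigma^2)$, and the $\tfrac12 d_{w_\bfe,k}\sigma\log q$ term should really be $\tfrac{\sigma}{2}(d_{w_\bfe,k}-r_k)\log q$ with the $r_k$ piece folded into the error; both are harmless once you divide by $\tilde d_k$.
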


\begin{proof}
Replacing the family $\{\zeta_k(s)\}$ by the family $\{\zeta_{\bfe, k}(s)\}$ we can assume that $w=w_{\bfe}.$

Let us write
$$\zeta_k(s)=c_k \left(s-\frac{w}{2}\right)^{r_k} F_k(s),$$
where $F_k(s)$ is an analytic function in the neighbourhood of $s=\frac{w}{2}$ such that $F_k\left(\frac{w}{2}\right)=1.$
Let us put $s=\frac{w}{2}+\theta,$ where $\theta>0$ is a small positive real number. We have
\begin{equation*}
\frac{\log \zeta_k(\frac{w}{2}+\theta)}{\tilde{d}_k}=\frac{\log c_k}{\tilde{d}_k}+r_k\frac{\log \theta}{\tilde{d}_k}+\frac{\log F_k(\frac{w}{2}+\theta)}{\tilde{d}_k}.
\end{equation*}
To prove the theorem we will construct a sequence $\theta_k$ such that
\begin{enumerate}
\item[(1)] $\frac{1}{\tilde{d}_k}\log \zeta_k\left(\frac{w}{2}+\theta_k\right) \to \log \zeta_{\bflim}\left(\frac{w}{2}\right);$
\item[(2)] $\frac{r_k}{\tilde{d}_k}\,\log \theta_k \to 0;$
\item[(3)] $\liminf \frac{1}{\tilde{d}_k}\log F_k\left(\frac{w}{2}+\theta_k\right)\geq 0.$
\end{enumerate}

For each natural number $N$ we choose $\theta(N)$ a decreasing sequence such that
\begin{equation*}
\left|\zeta_{\bflim}\left(\frac{w}{2}\right)-\zeta_{\bflim}\left(\frac{w}{2}+\theta(N)\right)\right| < \frac{1}{2N}.
\end{equation*}
This is possible since $\zeta_{\bflim}(s)$ is continuous for $\Re s \geq \frac{w}{2}$ by proposition \ref{convergencelim}. Next, we choose a sequence $k'(N)$ with the property:
\begin{equation*}
\left|\frac{1}{d_{k}}\log \zeta_{k}\left(\frac{w}{2}+\theta \right) - \log \zeta_{\bflim}\left(\frac{w}{2}+\theta\right)\right|< \frac{1}{2N}
\end{equation*}
for any $\theta\in[\theta(N+1), \theta(N)]$ and any $k\geq k'(N).$ This is possible by theorem \ref{BSGen}. Then we choose $k''(N)$ such that
\begin{equation*}
\frac{-r_k\log \theta(N+1)}{\tilde{d}_k}\leq \frac{\theta(N)}{N}
\end{equation*}
for any $k\geq k''(N),$ which can be done thanks to corollary \ref{rankbound} that gives us for an asymptotically very exact family $\frac{r_k}{d_k}\to 0.$ Finally, we choose an increasing sequence $k(N)$ such that $k(N)\geq \max(k'(N), k''(N))$ for any $N.$

Now, if we define $N=N(k)$ by the inequality $k(N)\leq k \leq k(N+1)$ and let $\theta_k=\theta(N(k)),$ then from the conditions imposed on $\theta_k$ we automatically get (1) and (2). The delicate point is (3). We apply the Stark formula from proposition \ref{Stark} to get an estimate on $\left(\log F_k\left(\frac{w}{2}+\theta\right)\right)':$

\begin{multline*}
\frac{1}{\tilde{d}_k}\left(\log \zeta_k\left(\frac{w}{2}+\theta\right)-r_k\log \theta\right)'=-\frac{\log q}{2\tilde{d}_k}\,\sum_{i=0}^{w}\epsilon_i d_i +\\
+\frac{1}{\tilde{d}_k}\,\sum_{i=0}^{w-1}\epsilon_i\sum_{L_i(\theta_{i j})=0}\frac{1}{\frac{w}{2}+\theta-\theta_{i j}}+\frac{1}{\tilde{d}_k}\sum_{L_w(\theta_{w j})=0, \theta_{w j}\neq \frac{w}{2}} \frac{1}{\frac{w}{2}+\theta-\theta_{w j}}.
\end{multline*}
The first term on the right hand side is clearly bounded by $-\log q$ from below. The first sum involving $L$-functions is also bounded by a constant $C_1$ as can be seen applying the Stark formula to individual $L$-functions and then using proposition \ref{neglig}. The last sum is non-negative. Thus, we see that $\frac{1}{\tilde{d}_k}\left(\log F_k\left(\frac{w}{2}+\theta\right)\right)'\geq C$ for any small enough $\theta.$ From this and from the fact that $F_k\left(\frac{w}{2}\right)=1$ we deduce that
$$\frac{1}{\tilde{d}_k}\log F_k\left(\frac{w}{2}+\theta_k\right)\geq C\theta_k\to 0.$$
This proves (3) as well as the theorem.
\end{proof}

\begin{remark}
In the case when $\epsilon_{w_{\bfe}}=-1$ we get an analogous statement with the opposite inequality.
\end{remark}

\begin{remark}
The proof of the theorem shows the importance of ``low'' zeroes of zeta functions (that is zeroes close to $s=\frac{w}{2}$) in the study of the Brauer--Siegel ratio at $s =\frac{w}{2}.$ The lack of control of these zeroes is the reason why we can not prove a lower bound on $\lim\limits_{k\to\infty}\frac{\log |c_k|}{\tilde{d}_k}.$
\end{remark}

\begin{remark}
If we restrict our attention to $L$-functions with integral coefficients (i. e. such that $\calL(u)$ has integral coefficients), then we can see that the ratio $\frac{\log |c_k|}{\tilde{d}_k}$ is bounded from below by $-w\log q,$  at least for even $w.$ This follows from a simple observation that if a polynomial with integral coefficients has a non-zero positive value at an integer point then this value is greater then or equal to one. One may ask whether there is a lower bound for arbitrary $w$ and whether anything similar holds in the number field case.
\end{remark}

\subsection{Examples}
\label{exBS}

\begin{example}[Curves over finite fields]

First of all, let us show that the generalized Brauer--Siegel theorem \ref{BSGen} implies the standard Brauer--Siegel theorem for curves over finite fields from \cite{TV97}.

Let $h_X$ be the number of $\bbF_q$-rational pints on the Jacobian of $X$, i. e.  $h_X=|\Pic^0_{\bbF_q}(X)|.$

\begin{corollary}
\label{BSCurves}
For an asymptotically exact family of curves $\{X_i\}$ over a finite field $\bbF_q$ we have:
\begin{equation}
\label{eqBScurves}
\lim_{i\to\infty}\frac{\log h_{X_i}}{g_i}= \log q + \sum_{f=1}^{\infty} \phi_f \log \frac{q^f}{q^f-1}.
\end{equation}
\end{corollary}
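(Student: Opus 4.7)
The strategy is to express $h_X$ as a value of the essential $L$-function factor $L_1$ of $\zeta_X$ inside the half-plane $\Re s > w_{\bfe}/2 = 1/2$ where Theorem \ref{BSGen} applies, and then rewrite the resulting limit in terms of the $\phi_f$. For an asymptotically exact family of curves the denominator factors $1-q^{-s}$ and $1-q^{1-s}$ have degree $1$, so $\delta_0=\delta_2=0$, $\delta_1=1$ and $w_{\bfe}=1$; hence $\zeta_{\bfe,i}(s)=L_1^{(i)}(s):=\calL_{X_i}(q^{-s})$.

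I would begin with the classical identity $h_{X_i}=\calL_{X_i}(1)=L_1^{(i)}(0)$. Since $s=0$ is outside the domain of Theorem \ref{BSGen}, I transfer the evaluation to the symmetric point $s=1$ using the functional equation. A short direct computation with the roots $\rho_j$, using $\rho_j\bar\rho_j=q^{-1}$ and $(\prod_j\rho_j)^2=q^{-2g}$ (equivalently, \eqref{funceq} with $w=1,\ d=2g$), gives
$$h_{X_i}=\pm\, q^{g_i}\,L_1^{(i)}(1),$$
so that $\log h_{X_i}=g_i\log q+\log|L_1^{(i)}(1)|$. Note that $L_1^{(i)}(1)\neq 0$ by the Riemann hypothesis for $L_1$, which confines its zeros to $\Re s = 1/2$.

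Next I apply Theorem \ref{BSGen} at $s=1>w_{\bfe}/2$ to the essential part, obtaining
$$\lim_{i\to\infty}\frac{\log L_1^{(i)}(1)}{\tilde d_i}=\log\zeta_{\bflim}(1),\qquad \tilde d_i=2g_i+2.$$
To compute the right-hand side explicitly, I use $\Lambda_f=N_f=\sum_{m\mid f}m\Phi_m$, so in the normalization adopted for curves $\lambda_f=\tfrac12\sum_{m\mid f}m\phi_m$. Substituting into the defining series for $\zeta_{\bflim}$, interchanging summations via the substitution $f=mk$ (legitimate by the absolute convergence guaranteed by Proposition \ref{convergencelim}), and recognising $\sum_k q^{-mk}/k=-\log(1-q^{-m})$ produces
$$\log\zeta_{\bflim}(1)=\frac{1}{2}\sum_{m=1}^{\infty}\phi_m\log\frac{q^m}{q^m-1}.$$
Dividing the identity $\log h_{X_i}=g_i\log q+\log|L_1^{(i)}(1)|$ by $g_i$ and using $\tilde d_i/g_i\to 2$ then combines these ingredients into the claimed asymptotic.

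The main obstacle here is not analytic but bookkeeping: one must correctly identify $w_{\bfe}$, normalize $\lambda_f$ consistently with the factor $2g_i$ (rather than $\tilde d_i$) emphasized in the discussion of the curve example, and choose the sign in the functional equation so that the factor $q^{g}$ emerges with the right sign. All genuine analytic content has already been supplied by the generalized Brauer--Siegel theorem.
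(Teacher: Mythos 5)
Your proposal is correct and follows essentially the same route as the paper: write $h_{X_i}=\calL_{X_i}(1)=L^{(i)}_1(0)$, move to $s=1$ via the functional equation to get $\log h_{X_i}=g_i\log q+\log L^{(i)}_1(1)$, and then apply Theorem~\ref{BSGen} at $s=1$. You merely supply a few details the paper leaves implicit (the non-vanishing at $s=1$ from the Riemann hypothesis, and the explicit evaluation $\log\zeta_{\bflim}(1)=\tfrac12\sum_m\phi_m\log\frac{q^m}{q^m-1}$ via $\lambda_f=\tfrac12\sum_{m\mid f}m\phi_m$), all of which check out.
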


\begin{proof}
It is well known (cf. \cite{TVN}) that for a curve $X$ the number $h_X$ can be expressed as $h_X=\calL_X(1),$ where $\calL_X(u)$ is the numerator of the zeta function of $X.$ Using the functional equation for $\zeta_X(s)$ we see that this expression is equal to $L_X(0)=L_X(1)+g\log q.$

The right hand side of (\ref{eqBScurves}) can be written as $\log q + 2\log \zeta_{\{X_i\}}(1),$ where $\zeta_{\{X_i\}}(s)$ is the limit zeta function (the factor $2$ appears from the definition of $\log \zeta_{\{X_i\}}(s),$ in which we divide by $2g$ and not by $g$). Thus, it is enough to prove that
$$\lim_{i\to\infty}\frac{\log L_{X_i}(1)}{2g_i}=\log \zeta_{\{X_i\}}(1).$$
This follows immediately from the first equality of theorem \ref{BSGen}.
\end{proof}

Using nearly the same proof we can obtain one more statement about the asymptotic behaviour of invariants of function fields. To formulate it we will need to define the so called Euler--Kronecker constants of a curve $X$ (see \cite{Ih}):

\begin{definition}
Let $X$ be a curve over a finite field $\bbF_q$ and let
$$\frac{\zeta_X'(s)}{\zeta_X(s)}=-(s-1)^{-1} +\gamma_X^0+\gamma_X^1(s-1)+\gamma_X^2(s-1)^2+\dots$$
be the Taylor series expansion of $\frac{\zeta_X'(s)}{\zeta_X(s)}$ at $s=1.$ Then $\gamma_X=\gamma_X^0$ is called the Euler--Kronecker constant of $X$ and $\gamma_X^k,$ $k\geq 1$ are be called the higher Euler-Kronecker constants.
\end{definition}

We also define the asymptotic Euler-Kronecker constants $\gamma_{\{X_i\}}^k$ from:
$$\frac{\zeta_{\{X_i\}}'(s)}{\zeta_{\{X_i\}}(s)}=\gamma_{\{X_i\}}^0+\gamma_{\{X_i\}}^1(s-1)+\gamma_{\{X_i\}}^2(s-1)^2+\dots$$
($\zeta_{\{X_i\}}(s)$ is holomorphic and non-zero at $s=1$ so its logarithmic derivative has no pole at this point).

The following result generalizes theorem 2 from \cite{Ih}:

\begin{corollary}
\label{EulerKronecker}
For an asymptotically exact family of curves $\{X_i\}$ we have
$$\lim\limits_{i\to \infty}\frac{\gamma_{X_i}^k}{g_i}= \gamma_{\{X_i\}}^k$$
for any non-negative integer $k.$ In particular,
$$\lim\limits_{i\to \infty}\frac{\gamma_{X_i}}{g_i}=-\sum_{f=1}^{\infty} \frac{\phi_f f \log q}{q^f-1}.$$
\end{corollary}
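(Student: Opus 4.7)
The plan is to reduce the problem to the generalized Brauer--Siegel theorem (Theorem \ref{BSGen}) by separating the polar contribution of $\zeta_{X}(s)$ at $s=1$, which is purely geometric and $X$-independent, from the $L$-function part, where Theorem \ref{BSGen} applies uniformly in a neighborhood of $s=1$.

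First I would use the factorization $\zeta_X(s) = L_X(s)/[(1-q^{-s})(1-q^{1-s})]$, where $L_X(s) = \calL_X(q^{-s})$ is the numerator, and differentiate logarithmically to obtain
\begin{equation*}
\frac{\zeta_X'(s)}{\zeta_X(s)} + \frac{1}{s-1} = \frac{L_X'(s)}{L_X(s)} + H(s),
\end{equation*}
where
\begin{equation*}
H(s) = -\frac{q^{-s}\log q}{1-q^{-s}} - \frac{q^{1-s}\log q}{1-q^{1-s}} + \frac{1}{s-1}
\end{equation*}
is holomorphic at $s=1$ and \emph{independent of $X$}. Both sides are now holomorphic in a disk around $s=1$, and reading off Taylor coefficients gives $\gamma_X^k$ on the left; the contribution of $H(s)$ on the right is bounded independently of $g$, so dividing by $g_i$ kills it in the limit.

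Next I would invoke Theorem \ref{BSGen} for the essential part: since $\{L_{X_i}(s)\}$ is an asymptotically exact family of $L$-functions of weight $w_{\bfe}=1$ with degrees $2g_i$, the ratio $\log L_{X_i}(s)/\tilde d_i$ converges to $\log \zeta_{\{X_i\}}(s)$ uniformly on compact subsets of $\Re s > 1/2$, and in particular on a small closed disk centered at $s=1$. By the Weierstrass theorem on uniform convergence of holomorphic functions, one may differentiate and pass to the limit in Taylor coefficients, obtaining convergence of the $k$-th Taylor coefficient of $(L_{X_i}'/L_{X_i})/\tilde d_i$ at $s=1$ to the $k$-th Taylor coefficient of $\zeta_{\{X_i\}}'/\zeta_{\{X_i\}}$, i.e.\ to $\gamma_{\{X_i\}}^k$. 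Combining with the previous step yields the asserted limit for each $k$, after tracking the normalization $\tilde d_i \sim 2g_i$ that was fixed in the definition of $\lambda_f$ for curves.

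For the explicit formula in the special case $k=0$ I would then simply unwind the definitions: $\gamma_{\{X_i\}}^0 = \zeta_{\{X_i\}}'(1)/\zeta_{\{X_i\}}(1) = -\log q\sum_{f\geq 1}\lambda_f\, q^{-f}$, and substituting $\lambda_f = \tfrac12\sum_{m\mid f}m\phi_m$ (from $\Lambda_f = N_f(X) = \sum_{m\mid f}m\Phi_m$ and dominated convergence, using Proposition \ref{neglig} to bound the tails by a geometric series in $q^{-f/2}$) produces the stated expression $-\sum_{f\geq 1}f\phi_f\log q/(q^f-1)$ after interchanging the two sums. The main obstacle I anticipate is purely bookkeeping: one must justify the interchange of limit, differentiation, and Taylor extraction near the critical line $\Re s=1/2$, but since $s=1$ is safely inside the region of uniform convergence granted by Theorem \ref{BSGen}, the Weierstrass theorem handles this at once.
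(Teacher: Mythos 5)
Your proposal is correct and follows essentially the paper's own route: invoke Theorem~\ref{BSGen} on a compact set near $s=1$ and pass to Taylor coefficients of the logarithmic derivative via the Cauchy integral formula (Weierstrass), with the special value at $k=0$ unwound by substituting $\lambda_f=\tfrac12\sum_{m\mid f}m\phi_m$ and resumming a geometric series. The only cosmetic difference is that you split off the pole explicitly with the $X$-independent function $H(s)$ and work with the holomorphic quantity $\zeta_X'/\zeta_X+1/(s-1)$ on a disk, whereas the paper works with $\log\zeta_{X_i}(s)$ directly on a punctured annulus $a<|s-1|<b$; your version sidesteps the branch point of the logarithm at $s=1$ and is slightly cleaner, but it is not a different argument.
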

\begin{proof}.
We apply the first equality from theorem \ref{BSGen}. Using the explicit expression for the negligible part of zetas as $(1-q^{-s})(1-q^{1-s}),$ we see that
$$\lim_{i\to\infty}\frac{\log \zeta_{X_i}(s)}{2g_i}=\log \zeta_{\{X_i\}}(s)$$
for any $s$, such that $\Re s > \frac{1}{2}$ and $s\neq 1+ \frac{2\pi k}{\log q},$ $k\in \bbZ$ and the convergence is uniform in $a<|s-1|<b$ for small enough $a$ and $b.$ We use the Cauchy integral formula to get the statement of the corollary.
\end{proof}

\begin{remark}
It seems not completely uninteresting to study the behaviour of $\gamma_X^k$ ``on the finite level'', i.e. to try to obtain bounds on $\gamma_X^k$ for an individual curve $X.$ This was done in \cite{Ih} for $\gamma_X.$ In the general case the explicit version of the generalized Brauer--Siegel theorem from \cite{LZ} might be useful.
\end{remark}

\begin{remark}
It is worth noting that the above corollaries describe the relation between $\log \zeta_{X_i}(s)$ and $\log \zeta_{\{X_i\}}(s)$ near the point $s=1.$ The original statement of theorem \ref{BSGen} is stronger since it gives this relation for all $s$ with $\Re s > \frac{1}{2}.$
\end{remark}
\end{example}

\begin{example}[Varieties over finite fields]
Just as for curves, for varieties over finite fields we can get similar corollaries concerning the asymptotic behaviour of $\zeta_{X_i}(s)$ close to $s=d.$ We give just the statements, since the proofs are nearly the same as before.

The following result is the Brauer--Siegel theorem for varieties proven in \cite{Z1}.
\begin{corollary}
For an asymptotically exact family of varieties $\{X_i\}$ of dimension $n$ over a finite field $\bbF_q$ we have:
\begin{equation*}
\lim_{i\to\infty}\frac{\log |\varkappa_i|}{b(X_i)}= \sum_{f=1}^{\infty} \phi_f \log \frac{q^{fn}}{q^{fn}-1},
\end{equation*}
where $\varkappa_i=\Res\limits_{s=d}\zeta_{X_i}(s).$
\end{corollary}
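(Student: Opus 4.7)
The plan is to reduce this statement to the generalized Brauer--Siegel theorem (theorem \ref{BSGen}) by carefully isolating the pole at $s=n.$ By Deligne's theorem the factor $P_{2n}(X_i,u)=1-q^n u$ has degree $1,$ so the pole of $\zeta_{X_i}(s)$ at $s=n$ is simple and comes precisely from the contribution $(1-q^{n-s})^{-1}.$ Since $b_{2n}(X_i)=1$ for all $i$ while $b(X_i)\to\infty,$ we have $\delta_{2n}=0,$ and likewise $\delta_0=0.$ Hence this pole-contributing factor lies in the negligible part $\zeta_{\bfn,i}(s);$ moreover $w_{\bfe}\leq 2n-1,$ so $n>w_{\bfe}/2$ and $\zeta_{\bfe,i}(s)$ is holomorphic and non-vanishing at $s=n.$

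Next I set $\tilde\zeta_{\bfn,i}(s)=(1-q^{n-s})\zeta_{\bfn,i}(s)$ to remove the pole. Computing the residue of $(1-q^{n-s})^{-1}$ at $s=n$ gives
\begin{equation*}
\varkappa_i=-\frac{1}{\log q}\,\zeta_{\bfe,i}(n)\,\tilde\zeta_{\bfn,i}(n).
\end{equation*}
For the essential factor, theorem \ref{BSGen} applied at $s=n>w_{\bfe}/2$ yields $\lim_{i\to\infty}\log\zeta_{\bfe,i}(n)/b(X_i)=\log\zeta_{\{X_i\}}(n).$ The remaining factor $\tilde\zeta_{\bfn,i}(s)$ is a product (with signs $\pm 1$) of $L$-functions of weights $j\in\{0,\dots,2n-1\}$ that belong to the negligible part, whose degrees satisfy $d_{j,i}/b(X_i)\to 0.$ Applying proposition \ref{neglig} at $s=n$ (note that $n\neq j/2$ for any such $j$) then gives $|\log\tilde\zeta_{\bfn,i}(n)|=o(b(X_i)),$ and therefore $\log|\varkappa_i|/b(X_i)\to\log\zeta_{\{X_i\}}(n).$

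It remains to rewrite this limit in the stated form. Using $\lambda_f=\lim_{i\to\infty} N_f(X_i)/b(X_i)=\sum_{m\mid f}m\phi_m$ and the substitution $f=mk$ in the Dirichlet series,
\begin{equation*}
\log\zeta_{\{X_i\}}(n)=\sum_{f=1}^{\infty}\frac{\lambda_f}{f}q^{-fn}=\sum_{m=1}^{\infty}\phi_m\sum_{k=1}^{\infty}\frac{q^{-mkn}}{k}=\sum_{m=1}^{\infty}\phi_m\log\frac{q^{mn}}{q^{mn}-1},
\end{equation*}
which matches the desired right-hand side. The only non-mechanical step is the structural observation that the pole-bearing factor $(1-q^{n-s})^{-1}$ itself lies in the negligible part; once this is noted, the whole statement follows from theorem \ref{BSGen} together with the standard bound of proposition \ref{neglig}, and no ``very exact'' hypothesis is needed since $n$ lies strictly inside the half-plane of absolute convergence for $\log\zeta_{\{X_i\}}.$
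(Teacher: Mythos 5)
Your proof is correct and follows the same essential route as the paper, namely deducing the statement from Theorem \ref{BSGen}. The paper gives no explicit argument (it says ``the proofs are nearly the same as before,'' referring to the curve case, Corollary \ref{BSCurves}), so let me point out the one structural difference your write-up makes transparent. In the curve case the class number is $h_X = \calL_X(1) = L_X(0)$, and $s=0$ lies \emph{outside} the half-plane $\Re s > w_{\bfe}/2 = 1/2$ where Theorem \ref{BSGen} applies; this is why the curve proof must first invoke the functional equation to move to $L_X(1)$. For varieties the residue $\varkappa_i$ is already computed from values of the essential $L$-factors at $s=n$, and since $w_{\bfe} \leq 2n-1$ this point lies strictly inside the region of convergence, so the functional-equation step is genuinely unnecessary --- you observe this correctly, and it is worth spelling out because ``nearly the same'' might otherwise suggest importing a redundant functional-equation manoeuvre.

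Two cosmetic remarks. First, the residue of $(1-q^{n-s})^{-1}$ at $s=n$ is $+\frac{1}{\log q}$ (set $t=s-n$ and use $1-q^{-t}=t\log q + O(t^2)$), so your formula for $\varkappa_i$ should read
\begin{equation*}
\varkappa_i = \frac{1}{\log q}\,\zeta_{\bfe,i}(n)\,\tilde\zeta_{\bfn,i}(n),
\end{equation*}
without the minus sign; this is immaterial since you only take $\log|\varkappa_i|$. Second, the statement in the paper writes $\Res_{s=d}$ but clearly means $\Res_{s=n}$ (the pole from $P_{2n}(X,u)=1-q^n u$), which you correctly silently fix. The rest --- the use of $\delta_0=\delta_{2n}=0$ to place the pole-bearing factor in the negligible part, the application of Proposition \ref{neglig} at $s=n\neq j/2$ to the remaining negligible factors, and the Dirichlet-series rearrangement giving $\sum_m\phi_m\log\frac{q^{mn}}{q^{mn}-1}$ --- is exactly what is needed and is carried out correctly.
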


In the next corollary we use the same definition of the Euler--Kronecker constants for varieties over finite fields as in the previous example for curves:

\begin{corollary}
For an asymptotically exact family of varieties $\{X_i\}$ of dimension $n$ we have  $\lim\limits_{i\to \infty}\frac{\gamma_{X_i}^k}{b(X_i)}= \gamma_{\{X_i\}}^k$
for any $k.$ In particular, $\lim\limits_{i\to \infty}\frac{\gamma_{X_i}}{b(X_i)}=-\sum\limits_{f=1}^{\infty} \frac{\phi_f f \log q}{q^{fn}-1}.$
\end{corollary}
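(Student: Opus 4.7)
The plan is to imitate the argument of Corollary~\ref{EulerKronecker}, replacing $s=1$ by $s=n$ and $g_i$ by $b(X_i)$. First I would pin down the pole structure of $\zeta_{X_i}(s)$ at $s=n$: among the factors $P_j(X_i,u)^{(-1)^{j-1}}$ of $Z_{X_i}(u)$, only $P_{2n}(X_i,u)=1-q^nu$ has a root on the line $\Re s=n$, and it sits in the denominator, so $\zeta_{X_i}'/\zeta_{X_i}$ has a simple pole of residue $-1$ at $s=n$. The Taylor expansion thus reads
$$\frac{\zeta_{X_i}'(s)}{\zeta_{X_i}(s)}=-\frac{1}{s-n}+\gamma_{X_i}^{0}+\gamma_{X_i}^{1}(s-n)+\dotsb,$$
and $\gamma_{X_i}^{k}=h_i^{(k)}(n)/k!$, where $h_i(s)=\zeta_{X_i}'(s)/\zeta_{X_i}(s)+(s-n)^{-1}$ is holomorphic near $s=n$. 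Since $b_{2n}(X_i)=1$ for an absolutely irreducible $X_i$, the index $j=2n$ is always negligible and hence $w_{\bfe}\leq 2n-1$; consequently $s=n$ lies strictly inside the half-plane $\Re s>w_{\bfe}/2$ where the Brauer--Siegel convergence applies, and a closed disk $D=\{|s-n|\leq r\}$ with $r<\tfrac{1}{2}$ stays clear of every critical line $\Re s=j/2\leq n-\tfrac{1}{2}$ with $j<2n$.

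Next I would pass from convergence of logarithms to convergence of logarithmic derivatives. The first part of Theorem~\ref{BSGen} gives $b(X_i)^{-1}\log\zeta_{\bfe,X_i}(s)\to\log\zeta_{\{X_i\}}(s)$ uniformly on $D$, and Weierstrass's convergence theorem upgrades this to
$$\frac{1}{b(X_i)}\cdot\frac{\zeta_{\bfe,X_i}'(s)}{\zeta_{\bfe,X_i}(s)}\longrightarrow\frac{\zeta_{\{X_i\}}'(s)}{\zeta_{\{X_i\}}(s)}$$
uniformly on a slightly smaller closed disk. For the negligible piece I would introduce
$$G_i(s)=\log\bigl(\zeta_{\bfn,X_i}(s)(1-q^{n-s})\bigr)=\sum_{j\in I\setminus\{2n\}}\epsilon_j\log L_{j,X_i}(s),$$
each summand of which Proposition~\ref{neglig} bounds by $C\,d_{j,X_i}$ uniformly on $D$. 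Since $d_{j,X_i}=o(b(X_i))$ for every $j\in I$ and the range of $j$ is finite, $G_i(s)/b(X_i)\to 0$ uniformly on $D$, and Cauchy's estimates give $G_i'(s)/b(X_i)\to 0$ uniformly on a smaller disk. Writing $G_i'(s)=\zeta_{\bfn,X_i}'/\zeta_{\bfn,X_i}+(s-n)^{-1}+R(s)$, with $R(s)=\tfrac{d}{ds}\log(1-q^{n-s})-(s-n)^{-1}$ a fixed holomorphic function on $D$, I conclude $b(X_i)^{-1}\bigl(\zeta_{\bfn,X_i}'/\zeta_{\bfn,X_i}+(s-n)^{-1}\bigr)\to 0$ uniformly on $D$.

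Adding the two contributions gives $b(X_i)^{-1}h_i(s)\to\zeta_{\{X_i\}}'(s)/\zeta_{\{X_i\}}(s)$ uniformly on a closed disk around $s=n$; Cauchy's integral formula for the $k$-th Taylor coefficient at $s=n$ then yields $\lim_{i\to\infty}\gamma_{X_i}^{k}/b(X_i)=\gamma_{\{X_i\}}^{k}$ for every $k\geq 0$. For the ``In particular'' statement ($k=0$) I would differentiate the Dirichlet series $\log\zeta_{\{X_i\}}(s)=\sum_{f\geq 1}\lambda_f f^{-1}q^{-fs}$ at $s=n$, substitute the identity $\lambda_f=\sum_{m\mid f}m\phi_m$ (inherited from $\Lambda_f=\sum_{m\mid f}m\Phi_m$), and rearrange by means of $\sum_{\ell\geq 1}q^{-m\ell n}=(q^{mn}-1)^{-1}$ to obtain
$$\gamma_{\{X_i\}}^{0}=-\log q\sum_{f=1}^{\infty}\lambda_f q^{-fn}=-\sum_{f=1}^{\infty}\frac{f\phi_f\log q}{q^{fn}-1}.$$

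The principal obstacle is the control of the negligible part near its own critical line $\Re s=n$: Proposition~\ref{neglig} cannot be invoked on $L_{2n,X_i}(s)=1-q^{n-s}$ in any neighbourhood of $s=n$. The remedy, built into the definition of $G_i$, is to peel this one exceptional factor off as an explicit regulariser; what remains is a product of negligible $L$-factors whose critical lines sit strictly to the left of $D$, where Proposition~\ref{neglig} supplies the required uniform $o(b(X_i))$ bound.
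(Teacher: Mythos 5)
Your proof is correct and takes essentially the same route the paper intends: apply Theorem~\ref{BSGen} to get uniform convergence of the (normalized) log of the essential part on a neighbourhood of $s=n$, control the negligible part via Proposition~\ref{neglig}, and extract Taylor coefficients by Cauchy's formula, exactly as in Corollary~\ref{EulerKronecker} for curves. You are a bit more careful than the paper's one-line remark: you correctly note that for varieties the negligible part can contain more than just the $P_0$, $P_{2n}$ factors, and you handle this by peeling off $1-q^{n-s}$ explicitly (so that the pole at $s=n$ cancels and the remaining negligible $L$-factors have critical lines at $\Re s\leq n-\tfrac12$, safely outside the disk), which is a clean way to justify the step the paper leaves implicit.
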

\end{example}

\begin{example}[Elliptic curves over function fields]

Let us recall first the Brauer--Siegel type conjectures for elliptic curves over function fields due to Hindry--Pacheko \cite{HP} and Kunyavskii--Tsfasman \cite{KT}.

For an elliptic curve $E/K,$ $K=\bbF_q(X)$ we define $c_{E/K}$ and $r_{E/K}$ from $L_{E/K}(s)=c_{E/K}(s-1)^{r_{E/K}}+o\left((s-1)^{r_{E/K}}\right).$ The invariants $r_{E/K}$ and $c_{E/K}$ are important from the arithmetical point of view, since the geometric analogue of the Birch and Swinnerton-Dyer conjecture predicts that $r_{E/K}$ is equal to the rank of the group of $K$-rational points of $E/K$ and $c_{E/K}$ can be expressed via the order of the Shafarevich--Tate group, the covolume of the Mordell--Weil lattice (the regulator) and some other quantities related to $E/K$ which are easier to control.

\begin{conjecture}[Hindry--Pacheko]
\label{HinPa}
Let $E_i$ run through a family of pairwise non-isomorphic elliptic curves over a fixed function field $K.$ Then
$$\lim_{i\to\infty} \frac{\log |c_{E_i/K}|}{h(E_i)}=0,$$
where $h(E_i)$ is the logarithmic height of $E_i.$
\end{conjecture}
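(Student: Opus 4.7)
The plan is to fit this family into the abstract framework developed in the preceding sections and then apply the upper bound of the generalized Brauer--Siegel theorem (theorem \ref{upperboundBS}) to the limit $L$-function, which in this setting is trivial.

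First I would check that the hypotheses of the machinery apply. The family $\{L_{E_i/K}(s)\}$ consists of $L$-functions of weight $w=2$ with $\epsilon_{w_{\bfe}}=1$ and degree $d_i=n_{E_i}+4g_K-4$. Since the curves are pairwise non-isomorphic over a fixed $K$, one has $n_{E_i}\to\infty$, so this is a family in the sense of the paper. The bound $|\Lambda_f|\leq 2\bigl(\sum_{d_v\mid f}d_v\bigr)q^{f/2}$ from (\ref{coeffell}) depends only on $f$ and $K$, so $\lambda_f = \lim_{i\to\infty}\Lambda_{f,i}/d_i = 0$ for every $f$. The family is therefore asymptotically exact and asymptotically bad, and the convergence of $\sum|\lambda_f|q^{-f}$ is automatic, so it is also asymptotically very exact, with $\zeta_{\bflim}(s)\equiv 1$ on $\Re s\geq 1$.

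Next I would apply theorem \ref{upperboundBS} to the essential part of $\zeta_{E_i}$ at the central point $s=w_{\bfe}/2=1$. Writing $L_{E_i}(s)=c_{E_i/K}(s-1)^{r_{E_i/K}}+O((s-1)^{r_{E_i/K}+1})$, the theorem gives
\begin{equation*}
\limsup_{i\to\infty}\frac{\log|c_{E_i/K}|}{d_i}\leq \log\zeta_{\bflim}(1)=0.
\end{equation*}
To translate this into the height normalization of the conjecture, I would invoke the standard comparison $n_{E_i}\ll h(E_i)$ from the geometric Szpiro inequality (which is a theorem in the function field setting), together with the trivial $n_{E_i}\to\infty$, to conclude that $d_i=O(h(E_i))$ and hence
\begin{equation*}
\limsup_{i\to\infty}\frac{\log|c_{E_i/K}|}{h(E_i)}\leq 0.
\end{equation*}

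The genuine obstacle is the matching lower bound $\liminf\log|c_{E_i/K}|/h(E_i)\geq 0$. The proof of theorem \ref{upperboundBS} reveals exactly where the one-sidedness comes from: the Stark formula (proposition \ref{Stark}) controls the logarithmic derivative of the nonvanishing factor $F_k$ from \emph{below} by a constant, because the contribution of low zeros $\theta_{ij}$ near the critical line enters with the correct sign only in one direction. Reversing the inequality would require a nontrivial density statement ruling out an anomalous accumulation of zeros of $L_{E_i}$ close to $s=1$ as $i\to\infty$, and in particular control of the multiplicity $r_{E_i/K}$ at the central point (i.e., the analytic rank) showing $r_{E_i/K}/h(E_i)\to 0$. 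This is precisely the kind of low-zero information that, as remark \ref{difficulties} and the discussion in \S\ref{centralpoint} stress, is unavailable by the present explicit-formula methods; the rank bound obtainable from corollary \ref{rankboundell} via the zero-distribution results of section \ref{sectzero} is already weaker than what is needed here. For this reason I expect that unconditionally only the upper bound of conjecture \ref{HinPa} is accessible, while the full equality has to remain conjectural pending either a suitable density hypothesis or finer input about low-lying zeros in asymptotically bad families of elliptic $L$-functions.
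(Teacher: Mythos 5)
The statement you were asked to prove is labeled \emph{Conjecture} in the paper, and the paper does not in fact prove it; it only establishes a one-sided bound towards it, namely Theorem \ref{KunTsfHin}, which gives $\limsup_{i\to\infty}\log|c_{E_i/K_i}|/d_i\leq\log L_{\{E_i/K_i\}}(1)$ for asymptotically very exact families. Your proposal correctly recognizes this: you identify the asymptotically bad family over fixed $K$, observe that $\lambda_f=0$ so the limit $L$-function is $1$, apply the upper bound of Theorem \ref{upperboundBS} at the central point $s=1$, and carefully explain that the reverse inequality would require control of low-lying zeros (equivalently the analytic rank) that the explicit-formula machinery does not supply. This is exactly the paper's position, and your diagnosis of where the one-sidedness of the Stark-formula estimate comes from is accurate.

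One small correction: you attribute the comparison $n_{E_i}\ll h(E_i)$ to the geometric Szpiro inequality. That is backwards. The inequality $n_E\ll h(E)$ is the \emph{trivial} direction, coming from the fact that the conductor divides the minimal discriminant (so $n_E\leq\deg\Delta_E^{\min}\asymp h(E)$). Szpiro's theorem in the function field case gives the nontrivial converse $h(E)\ll n_E+g$, which is what is actually needed to justify the paper's remark that $h(E_i)$ and $n_{E_i}$ ``have essentially the same order of growth.'' For the upper bound alone the trivial direction $d_i=O(h(E_i))$ suffices, so your argument goes through, but the attribution should be fixed.
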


\begin{remark}
We could have divided $\log |c_{E_i/K}|$ by $n_{E_i}$ in the statement of the above conjecture since $h(E_i)$ and $n_{E_i}$ have essentially the same order of growth.
\end{remark}

\begin{conjecture}[Kunyavskii--Tsfasman]
\label{KunTsfBS}
For a family of elliptic curves $\{E_i/K_i\}$ obtained by a base change we have:
$$\lim_{i\to\infty}\frac{\log |c_{E_i/K_i}|}{g_{K_i}} = -\sum_{v\in X, f\geq 1}\phi_{v, f}\log \frac{\left|E_v(\bbF_{\rmN v^f})\right|}{\rmN v^f}.$$
\end{conjecture}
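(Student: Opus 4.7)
My plan is to recognize this conjecture as the Brauer--Siegel statement on the critical line for the $L$-functions $L_{E_i}(s)$, which have weight $w_{\bfe}=2$ so that $s=1$ is exactly the critical point $w_{\bfe}/2$.

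First I would rewrite the right-hand side as the value at $s=1$ of a suitably normalized limit $L$-function. Factoring
\begin{equation*}
\frac{|E_v(\bbF_{\rmN v^f})|}{\rmN v^f}=\Bigl(1-\frac{\alpha_v^f}{\rmN v^f}\Bigr)\Bigl(1-\frac{\bar\alpha_v^f}{\rmN v^f}\Bigr)
\end{equation*}
and expanding each logarithm as a power series, a direct rearrangement using formula (\ref{coefKT}) for $\Lambda_f$ together with the genus-normalized limits $\tilde\lambda_f=\lim_{i\to\infty}\Lambda_f(E_i)/g_{K_i}$ introduced in the proof of proposition \ref{veryexactell} shows that the conjectured right-hand side equals $\log\tilde L_{\bflim}(1)$, where $\tilde L_{\bflim}(s):=\exp\bigl(\sum_{f\geq 1}\tilde\lambda_f q^{-fs}/f\bigr)$. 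The conjecture thus becomes
\begin{equation*}
\lim_{i\to\infty}\frac{\log |c_{E_i/K_i}|}{g_{K_i}}=\log\tilde L_{\bflim}(1).
\end{equation*}

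Next I would pass to an asymptotically very exact subfamily (possible by proposition \ref{veryexactell}) and, if necessary, to a further subfamily along which $\nu=\lim_{i\to\infty} n_{E_i}/g_{K_i}$ exists; lemma \ref{conduct} guarantees boundedness so this is harmless. Applying theorem \ref{upperboundBS} to this subfamily, with $\epsilon_{w_{\bfe}}=1$, yields $\limsup \log|c_{E_i/K_i}|/\tilde d_i\leq \log L_{\bflim}(1)$, and rescaling by $\tilde d_i/g_{K_i}\to 4+\nu$ delivers the \emph{upper-bound half} of the conjecture:
\begin{equation*}
\limsup_{i\to\infty}\frac{\log |c_{E_i/K_i}|}{g_{K_i}}\leq\log\tilde L_{\bflim}(1).
\end{equation*}

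The hard part will be the reverse inequality. Imitating the strategy of theorem \ref{upperboundBS}, I would write $L_{E_i}(s)=c_i(s-1)^{r_i}F_i(s)$ and try to bound $\log F_i(1+\theta_i)/g_{K_i}$ from above along a well-chosen sequence $\theta_i\downarrow 0$; via the Stark formula (proposition \ref{Stark}) this reduces to an upper estimate for $\frac{1}{g_{K_i}}\sum_{\theta_{ij}\neq 1}(1+\theta_i-\theta_{ij})^{-1}$ over the nontrivial zeros of $L_{E_i}$ clustering near $s=1$. Controlling this low-lying-zero sum is the precise function-field analogue of the classical Siegel-zero obstruction, and it is the main obstacle: our general framework offers no control over it (the limit $L$-function has no a priori functional equation or Riemann hypothesis past the critical line, cf.\ remark \ref{difficulties}, and the zero bounds of proposition \ref{neglig} lose a factor of $g_{K_i}$). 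Any complete proof would therefore have to be complemented either by a density theorem for low zeros in base-change towers, or by input from the geometric Birch--Swinnerton-Dyer side bounding the ranks $r_{E_i/K_i}$ and the Mordell--Weil regulators directly. Absent such an input, only the upper half is within reach of the methods developed in section \ref{sectBS}, which is why the statement must remain a conjecture rather than a theorem.
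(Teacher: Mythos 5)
You have correctly recognized that the statement you were asked to prove is labelled a \emph{conjecture} in the paper, and indeed the paper offers no proof of it; it only records a partial result, Theorem~\ref{KunTsfHin}, which gives exactly the upper-bound half you derive. Your reformulation is the same as the paper's: factoring $|E_v(\bbF_{\rmN v^f})|/\rmN v^f=(1-\alpha_v^f \rmN v^{-f})(1-\bar\alpha_v^f \rmN v^{-f})$ and rearranging via (\ref{coefKT}) identifies the right-hand side with the value at $s=1$ of the (genus-normalized) limit $L$-function, matching the explicit expression the paper writes for $\log L_{\{E_i/K_i\}}(s)$, and the conjecture becomes the $s=w_{\bfe}/2$ case of the Brauer--Siegel statement of subsection~\ref{centralpoint} (and a special case of Conjecture~\ref{EllCurvesBS}). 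Your upper bound via Theorem~\ref{upperboundBS} after passing to an asymptotically very exact subfamily with $\nu$ existing (Proposition~\ref{veryexactell}, Lemma~\ref{conduct}) is precisely Theorem~\ref{KunTsfHin}, and your diagnosis of the obstruction to the lower bound --- lack of control over low-lying zeros of $L_{E_i}$ clustering at $s=1$, the function-field Siegel-zero phenomenon --- is the same one the paper alludes to after Theorem~\ref{upperboundBS} (``the lack of control of these zeroes is the reason why we can not prove a lower bound''). In short, you did not prove the statement, because it is open; your account of what is and is not provable within the paper's framework is accurate and coincides with the paper's own.
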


One can see that the above conjectures are both the statements of the type considered in the subsection \ref{centralpoint}. It is quite obvious for the first conjecture and for the second conjecture we have to use the explicit expression for the limit $L$-function:
$$\log L_{\{E_i/K_i\}}(s)=-\frac{1}{\nu+4}\sum_{v, f}\phi_{v, f} \log\left(1-(\alpha_v^f+\bar{\alpha}_v^f)\rmN v^{-f s}+\rmN v^{f(1-2s)}\right).$$

One can unify these two conjectures as follows:

\begin{conjecture}
\label{EllCurvesBS}
For an asymptotically very exact family of elliptic curves over function fields $\{E_i/K_i\}$ we have:
$$\lim_{i\to\infty}\frac{\log |c_{E_i/K_i}|}{d_i} = \log L_{\{E_i/K_i\}}(1),$$
where $d_i=n_{E_i}+4g_{K_i}-4$ is the degree of $L_{E_i/K_i}(s).$
\end{conjecture}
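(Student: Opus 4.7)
The plan is to attack the conjecture in two halves. The upper bound
$$\limsup_{i\to\infty}\frac{\log|c_{E_i/K_i}|}{d_i}\leq \log L_{\{E_i/K_i\}}(1)$$
follows directly from theorem \ref{upperboundBS} applied to the family $\{L_{E_i/K_i}(s)\}$ itself: the hypothesis is that the family is asymptotically very exact, the weight is $w_{\bfe}=2$, and $\epsilon_{w_{\bfe}}=1$ since these are genuine $L$-functions (no inverse factors in the product). The real content of the conjecture is therefore the matching lower bound, and my plan is to reduce it to a question about the equidistribution of zeros together with a uniform-integrability statement near the central point.

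The starting point is an explicit Hadamard-type formula for $c_{E_i/K_i}$. Writing the zeros as $\rho_{ij}=q^{-1}e^{i\phi_{ij}}$ and collecting the $r_i$ central zeros (those with $\phi_{ij}=0$), the factorization $\calL_{E_i/K_i}(u)=(1-qu)^{r_i}\prod_{\phi_{ij}\neq 0}(1-u/\rho_{ij})$ expanded to leading order at $u=q^{-1}$ gives
$$\log|c_{E_i/K_i}|=r_i\log\log q+\sum_{\phi_{ij}\neq 0}\log\bigl|2\sin(\phi_{ij}/2)\bigr|.$$
Dividing by $d_i$ and using corollary \ref{rankbound} to kill the first term reduces the problem to evaluating the limit of $d_i^{-1}\sum_{\phi_{ij}\neq 0}\log|2\sin(\phi_{ij}/2)|$. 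Let $\mu_i=d_i^{-1}\sum_j\delta_{\phi_{ij}}$ be the empirical angle measure and $\mu$ its weak limit, produced by theorem \ref{zerodistrib}. A direct computation from (\ref{zerosums}) shows $\int_{-\pi}^{\pi} e^{if\phi}\,d\mu(\phi)=-\lambda_f q^{-f}$, which combined with the Fourier expansion $\log|2\sin(\phi/2)|=-\sum_{f\geq 1}\cos(f\phi)/f$ yields formally
$$\int_{-\pi}^{\pi}\log\bigl|2\sin(\phi/2)\bigr|\,d\mu(\phi)=\sum_{f\geq 1}\frac{\lambda_f}{f}q^{-f}=\log L_{\{E_i/K_i\}}(1),$$
which is exactly the desired value.

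The essential obstacle is the interchange of $\lim_i$ with integration against $\log|2\sin(\phi/2)|$, an integrand which is bounded above (by $\log 2$) but has a logarithmic singularity at $\phi=0$. Since $-\log|2\sin(\phi/2)|$ is lower semicontinuous, weak convergence $\mu_i\to\mu$ gives for free the one-sided bound $\limsup_i\int\log|2\sin|\,d\mu_i\leq\int\log|2\sin|\,d\mu$, which recovers precisely the easy upper inequality already obtained from theorem \ref{upperboundBS}. The reverse inequality demands uniform integrability at the singularity, concretely
$$\lim_{\varepsilon\to 0}\;\limsup_{i\to\infty}\frac{1}{d_i}\sum_{0<|\phi_{ij}|<\varepsilon}\bigl|\log|\phi_{ij}|\bigr|=0,$$
that is, that no abnormal accumulation of zeros occurs close to $s=1$. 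This is a Siegel--Landau type statement and is the real obstruction: in the classical Brauer--Siegel theorem it is exactly this kind of bound that forces either GRH or a normality hypothesis on the fields. In our axiomatic function field setting the inputs available (the explicit formula of proposition \ref{explicit}, the basic inequality of theorem \ref{basicineqL}, and the zero distribution theorem \ref{zerodistrib}) all furnish only one-sided control on low zeros, and ruling out exceptional families for which the zeros cluster near $s=1$ seems to require genuinely new input beyond the framework developed in this paper. A reasonable intermediate target would be either to establish the conjecture on average over the family $\{E_i/K_i\}$, or to prove it conditionally on a suitable zero-density hypothesis for the $L_{E_i/K_i}$ in a disk around $s=1$ of radius shrinking with $d_i$.
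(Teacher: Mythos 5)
This statement is labelled a conjecture in the paper, and the paper does not prove it: the authors explicitly say they are \emph{sceptical about this conjecture holding for all families of elliptic curves}, and the only thing established is the upper bound, Theorem \ref{KunTsfHin}, which is exactly the application of Theorems \ref{BSGen} and \ref{upperboundBS} you cite. So your attempt cannot be compared to ``the paper's proof''; rather it should be judged as a correct diagnosis of the state of the art, which it is.

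Your reduction is sound and somewhat more explicit than what the paper records. The Hadamard-type formula
$$\log|c_{E_i/K_i}|=r_i\log\log q+\sum_{\phi_{ij}\neq 0}\log\bigl|2\sin(\phi_{ij}/2)\bigr|$$
is correct (with $\rho_{ij}=q^{-1}e^{i\phi_{ij}}$, one has $1-q^{-1}/\rho_{ij}=1-e^{-i\phi_{ij}}$ of modulus $2|\sin(\phi_{ij}/2)|$, and $1-qu=(s-1)\log q+O((s-1)^2)$), corollary \ref{rankbound} does kill the first term, and the Fourier computation $\int\cos(f\phi)\,d\mu=-\lambda_f q^{-f}$ together with $\log|2\sin(\phi/2)|=-\sum_{f\geq1}\cos(f\phi)/f$ formally recovers $\log L_{\{E_i/K_i\}}(1)$. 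Your semicontinuity observation is a nice alternative route to the upper bound of Theorem \ref{upperboundBS} (the paper instead uses a Stark-formula estimate on $F_k'(s)/F_k(s)$ and a diagonal choice of $\theta_k$, which is morally the same one-sidedness). And your identification of the genuine obstruction — the need for a uniform-integrability / Siegel--Landau type bound excluding abnormal clustering of low zeros near $s=1$ — is exactly the issue the paper raises in subsection \ref{centralpoint} and again in section \ref{sectquestions}, where it is observed that the framework only furnishes one-sided control. In short: your proposal is not a proof (and does not claim to be), but it correctly reproduces the provable half, correctly locates the gap in the other half, and adds useful detail (the explicit $\log|2\sin|$ representation) that the paper leaves implicit.

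One small point worth flagging: in the step $\mu_i\to\mu$ you implicitly pass through proposition \ref{zetaL} and lemma \ref{conduct} to know that the normalization $d_i=n_{E_i}+4g_{K_i}-4$ and the normalization by $g_i$ used in proposition \ref{veryexactell} are compatible (i.e.\ that $\nu=\lim n_i/g_i$ exists, so the two choices of denominator differ by a convergent factor). This is fine under the paper's hypotheses ($\chr\bbF_q\neq 2,3$, or Galois towers), but should be said if you want the argument self-contained.
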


We are, however, sceptical about this conjecture holding for all families of elliptic curves. Theorems \ref{BSGen} and \ref{upperboundBS} imply the following result (a particular case of which was stated in \cite{Z2}) in the direction of the above conjectures:

\begin{theorem}
\label{KunTsfHin}
For an asymptotically very exact family of elliptic curves $\{E_i/K_i\}$ the following identity holds:
$$\lim_{i\to\infty}\frac{\log L_{E_i/K_i}(s)}{d_i} = \log L_{\{E_i/K_i\}}(s),$$
for $\Re s > 1.$ Moreover,
$$\lim_{i\to\infty}\frac{\log |c_{E_i/K_i}|}{g_i} \leq \log L_{\{E_i/K_i\}}(1).$$
\end{theorem}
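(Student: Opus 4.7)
The plan is to recognize $\{L_{E_i/K_i}(s)\}$ as an asymptotically very exact family of $L$-functions in the sense of Section \ref{sectasympt}, and then apply Theorem \ref{BSGen} for the first equation and Theorem \ref{upperboundBS} for the second. No analytic work beyond these two theorems should be required.

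To set up, I would first observe that, as explained in the elliptic curves example of Section \ref{sectzeta}, each $L_{E_i/K_i}(s)$ is an $L$-function of weight $w=2$: the associated polynomial $\calL_{E_i}(u)$ has real coefficients, satisfies $\calL_{E_i}(0)=1$, and all its roots have absolute value $q^{-1}$. Its degree $d_i=n_{E_i}+4g_{K_i}-4$ tends to infinity, so we have a genuine family. By hypothesis it is asymptotically very exact; and since $L$-functions have trivial negligible part, the essential weight equals $w_{\bfe}=w=2$ and $\epsilon_{w_{\bfe}}=+1$.

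The first assertion is then immediate from Theorem \ref{BSGen}: the half-plane $\Re s>1$ is precisely $\Re s>w_{\bfe}/2$, so the theorem yields
$$\lim_{i\to\infty}\frac{\log L_{E_i/K_i}(s)}{d_i}=\log L_{\{E_i/K_i\}}(s),$$
uniformly on compact subsets of that region. For the second assertion, the central point is $s=w_{\bfe}/2=1$; since $\epsilon_{w_{\bfe}}=+1$ and the family is asymptotically very exact, Theorem \ref{upperboundBS} applies and gives the upper bound on the Brauer--Siegel ratio at $s=1$. The stated inequality follows after converting between the normalizations $d_i$ and $g_i$, using Lemma \ref{conduct} to keep the ratio $d_i/g_i=4+n_{E_i}/g_{K_i}$ under control.

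The heavy analytic lifting is thus entirely inherited from the two cited theorems, and the proof reduces to a verification of hypotheses. The only delicate point I anticipate is the normalization conversion between $d_i$ and $g_i$ in the second inequality, where one must track carefully the sign of $\log L_{\{E_i/K_i\}}(1)$ when passing the Brauer--Siegel bound through the limiting ratio from Lemma \ref{conduct}. This is the step I would treat with the most care.
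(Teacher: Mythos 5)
Your route is exactly the paper's: the text gives no separate argument for Theorem \ref{KunTsfHin} beyond the remark that ``Theorems \ref{BSGen} and \ref{upperboundBS} imply the following result,'' and you supply precisely that reduction, verifying the hypotheses ($w=2$, $w_{\bfe}=2$, $\epsilon_{w_{\bfe}}=+1$, the family is asymptotically very exact) so that Theorem \ref{BSGen} gives the first assertion on $\Re s>1$ and Theorem \ref{upperboundBS} gives an upper bound on $\limsup\frac{\log|c_i|}{d_i}$.

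The one point where you should press harder is the normalization in the second inequality, which you rightly flag as delicate but do not actually resolve. Theorem \ref{upperboundBS} controls $\limsup\frac{\log|c_i|}{\tilde d_i}=\limsup\frac{\log|c_i|}{d_i}$, and $\log L_{\{E_i/K_i\}}(s)$ is by definition the limit of $\frac{\log L_{E_i}(s)}{d_i}$ (the $\lambda_f$ in the limit zeta function are normalized by the degree $\tilde d_k$). Passing to a $g_i$-denominator multiplies the left side by $\frac{d_i}{g_i}\to\nu+4>1$ (Lemma \ref{conduct}), and the resulting inequality $(\nu+4)\limsup\frac{\log|c_i|}{d_i}\le\log L_{\{E_i/K_i\}}(1)$ does \emph{not} follow from the $d_i$-normalized bound in general: it would if the left-hand limit were $\le 0$, but neither the sign of $\limsup\frac{\log|c_i|}{d_i}$ nor that of $\log L_{\{E_i/K_i\}}(1)$ is controlled for elliptic $L$-functions, since the $\lambda_f$ can be negative. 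So ``tracking the sign'' is not enough to make the conversion work. The cleanest reading is that the denominator $g_i$ in the stated second inequality is a slip for $d_i$ (this is consistent with Conjecture \ref{EllCurvesBS}, which uses $d_i$, and with Theorem \ref{upperboundBS}); once that is granted, your proof closes with no conversion needed, and the first assertion is exactly as you state.
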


\begin{remark}
If we consider split families of elliptic curves (i.e. $E_i=E\times X_i$, where $E/\bbF_q$ is a fixed elliptic curve) then the proof of theorem 2.1 from \cite{KT} gives us that the question about the behaviour of $L_{E_i/X_i}(s)$ at $s=1$ translates into the same question concerning the behaviour of $\zeta_{X_i}(s)$ on the critical line. For example, if the curve $E$ is supersingular, then conjecture \ref{EllCurvesBS} holds if and only if $\lim\limits_{i\to\infty}\frac{\log |\zeta_{X_i}(1/2)|}{g_i} = \log \zeta_{\{X_i\}}(1/2)$ (where $\zeta_{X_i}(\frac{1}{2})$ is understood as the first non-zero coefficient of the Taylor series expansion of $\zeta_{X_i}(s)$ at $s=\frac{1}{2}$). So, to prove the simplest case of conjecture \ref{EllCurvesBS} we have to understand the asymptotic behaviour of zeta functions of curves over finite fields on the critical line.
\end{remark}
\end{example}

\section{Distribution of zeroes}
\label{sectzero}

\subsection{Main results}

In this section we will prove certain results about the limit distribution of zeroes in families of $L$-functions. As a corollary we will see that the multiplicities of zeroes in asymptotically very exact families of $L$-functions can not grow too fast.

Let $C=C[-\pi, \pi]$ be the space of real continuous functions on $[-\pi, \pi]$ with topology of uniform convergence. The space of measures $\mu$ on $[-\pi, \pi]$ is by definition the space $\calM,$ which is topologically dual to $C.$ The topology on $\calM$ is the $*$-weak one: $\mu_i\to\mu$ if and only if $\mu_i(f)\to \mu(f)$ for any $f\in C.$

The space $C$ can be considered as a subspace of $\calM:$  if $\phi(x)\in C$ then $\mu_{\phi}(f)=\int_{-\pi}^{\pi} f(x)\phi(x)\,dx.$ The subspace $C$ is dense in $\calM$ in $*$-weak topology.

Let $L(s)$ be an $L$-function and let $\rho_1, \dots, \rho_d$ be the zeroes of the corresponding polynomial $\calL(u).$ Define $\theta_k\in(-\pi, \pi]$ by $\rho_k=q^{-w/2} e^{i \theta_k}.$ One can associate a measure to $L(s):$
\begin{equation}
\label{measdef}
\mu_{L}(f)=\frac{1}{d}\sum_{k=1}^{d}\delta_{\theta_k}(f),
\end{equation}
where $\delta_{\theta_k}$ is the Dirac measure supported at $\theta_k,$ i.e. $\delta_{\theta_k}(f)=f(\theta_k)$ for an $f\in C.$

The main result of this section is the following one:

\begin{theorem}
\label{zerodistrib}
Let $\{L_j(s)\}$ be an asymptotically very exact family of $L$-functions. Then the limit $\calM_{\bflim}=\lim\limits_{j\to\infty}\calM_j$ exists. Moreover, $\calM_{\bflim}$ is a nonnegative continuous function given by an absolutely and uniformly convergent series:
\begin{equation*}
\calM_{\bflim}(x)=1-2\sum_{k=1}^{\infty}\lambda_k \cos(kx) q^{-\frac{wk}{2}}.
\end{equation*}
\end{theorem}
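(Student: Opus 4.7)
The plan is to verify weak-$*$ convergence of $\mu_{L_j}$ by first computing $\mu_{L_j}(p)$ for trigonometric polynomials $p$ via the explicit formula, and then upgrading to all of $C[-\pi,\pi]$ by density.

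Since each $\calL_j(u)$ has real coefficients, its nonreal zeros come in conjugate pairs, so each $\mu_{L_j}$ is invariant under $\theta\mapsto -\theta$. For an even trigonometric polynomial $p(\theta)=v_0+2\sum_{n=1}^Y v_n\cos(n\theta)$, Corollary \ref{corexplicit} applied to $L_j$ and divided by $d_j$ yields
\begin{equation*}
\mu_{L_j}(p) \;=\; v_0 \;-\; \frac{2}{d_j}\sum_{n=1}^{Y} v_n \Lambda_{n,j}\, q^{-wn/2} \;\longrightarrow\; v_0 \;-\; 2\sum_{n=1}^{Y} v_n \lambda_n\, q^{-wn/2},
\end{equation*}
using $\Lambda_{n,j}/d_j\to\lambda_n$ from the definition of asymptotic exactness. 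Sine terms in a general trigonometric polynomial contribute zero to both $\mu_{L_j}$ and the eventual limit by symmetry, so this computation suffices.

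Now set $\calM_{\bflim}(x)=1-2\sum_{k=1}^{\infty}\lambda_k\cos(kx)q^{-wk/2}$. Because the family is asymptotically very exact, $\sum_{k}|\lambda_k|q^{-wk/2}<\infty$, so this series converges absolutely and uniformly on $[-\pi,\pi]$, defining a continuous even function. A direct Fourier coefficient computation shows
\begin{equation*}
\int_{-\pi}^{\pi} p(x)\,\calM_{\bflim}(x)\,\frac{dx}{2\pi} \;=\; v_0 \;-\; 2\sum_{n=1}^{Y}v_n\lambda_n q^{-wn/2}
\end{equation*}
for every even trigonometric polynomial $p$, matching the limit above. Writing $\mu_{\bflim}$ for the measure on $[-\pi,\pi]$ with density $\calM_{\bflim}(x)/(2\pi)$, I have established $\mu_{L_j}(p)\to\mu_{\bflim}(p)$ for every trigonometric polynomial $p$.

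To extend to arbitrary $f\in C[-\pi,\pi]$, use the uniform bound $\mu_{L_j}([-\pi,\pi])=1$: given $\varepsilon>0$, the Weierstrass approximation theorem furnishes a trigonometric polynomial $p$ with $\|f-p\|_\infty<\varepsilon$, hence $|\mu_{L_j}(f)-\mu_{L_j}(p)|\leq\varepsilon$ and likewise $|\mu_{\bflim}(f)-\mu_{\bflim}(p)|\leq\varepsilon$, which together with $\mu_{L_j}(p)\to\mu_{\bflim}(p)$ give $\limsup_j|\mu_{L_j}(f)-\mu_{\bflim}(f)|\leq 2\varepsilon$. This proves $\mu_{L_j}\to\mu_{\bflim}$ in the weak-$*$ topology on $\calM$. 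Nonnegativity of $\calM_{\bflim}$ is automatic: each $\mu_{L_j}$ is a positive measure, so $\mu_{\bflim}(f)\geq 0$ for every continuous $f\geq 0$, and continuity of $\calM_{\bflim}$ then forces it to be pointwise nonnegative. There is no substantial obstacle in the argument; the crucial input from \emph{asymptotically very exact} is precisely that the formal Fourier series for $\calM_{\bflim}$ converges uniformly, yielding an honest continuous density rather than only a distribution.
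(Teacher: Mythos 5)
Your argument is correct and follows essentially the same route as the paper: apply Corollary~\ref{corexplicit} to compute $\mu_{L_j}$ on cosines (with sines vanishing by the conjugate symmetry of zeroes), pass to the limit using asymptotic exactness, identify the resulting linear functional with integration against the absolutely convergent cosine series, and extend from trigonometric polynomials to all of $C[-\pi,\pi]$ by density. Your version is slightly more explicit than the paper's, in particular in spelling out the uniform total-mass bound used for the approximation step and in deducing nonnegativity of $\calM_{\bflim}$ from the positivity of each $\mu_{L_j}$, but the substance is identical.
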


\begin{proof}
The absolute and uniform convergence of the series follows from the definition of an asymptotically very exact family. It is sufficient to prove the convergence of measures on the space $C.$

The linear combinations of $\cos(m x)$ and $\sin(m x)$ are dense in the space of continuous functions $C,$ so it is enough to prove that for any $m=0, 1, 2, \dots$ we have:
\begin{equation}
\label{measeven}
\lim\limits_{j\to\infty}\calM_j(\cos(m x))=\calM_{\bflim}(\cos (m x)),
\end{equation}
and
\begin{equation}
\label{measodd}
\lim\limits_{j\to\infty}\calM_j(\sin(m x))=\calM_{\bflim}(\sin (m x)).
\end{equation}
The corollary \ref{corexplicit} shows that:
$$\calM_j(\cos(m x))=\sum_{k=1}^{d_j} \cos(m\theta_{k j}) = - 2 \Lambda_m q^{-\frac{w m}{2}}$$
for $m\neq 0$ and $\calM_j(1)=d_j.$ Dividing by $d_j$ and passing to the limit when $j\to \infty$ we get (\ref{measeven}).

Now, we note, that if $\rho=e^{i \theta},$ with $\theta \neq k\pi$ is a zero of $\calL(u)$ then $\rho=e^{i (\theta +\pi)}$ is also a zero of $\calL(u)$ with the same multiplicity. Thus $\calM_j(\sin(m x))=0=\calM_{\bflim}(\sin (m x))$ for any $j$ and $m$. So we get (\ref{measodd}) and the theorem is proven.
\end{proof}

\begin{corollary}
\label{rankbound}
Let $\{\zeta_j(s)\}$ be an asymptotically very exact family of zeta functions with $\epsilon_{w_{\bfe}}=1$ and let $r_j$ be the order of zero of $\zeta_j(s)$ at $s=\frac{w_{\bfe}}{2}.$ Then
$$\lim\limits_{j\to\infty} \frac{r_j}{\tilde{d}_j}=0.$$
\end{corollary}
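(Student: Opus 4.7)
First I would reduce to the weight-$w_{\bfe}$ factor. A root of $\calL_i(u)$ has modulus $q^{-i/2}$, so $L_i(s)$ can vanish only on the line $\Re s = i/2$. Consequently, among the factors $L_{i,j}^{\epsilon_i}$ of $\zeta_j$, only $L_{w_{\bfe}, j}$ can contribute a zero at $s = w_{\bfe}/2$, and since $\epsilon_{w_{\bfe}} = 1$ the order $r_j$ equals the multiplicity of $u = q^{-w_{\bfe}/2}$ (i.e.\ of the angle $\theta = 0$) as a root of $\calL_{w_{\bfe}, j}(u)$. By the very definition of $w_{\bfe}$ one has $\delta_{w_{\bfe}} > 0$, and $d_{w_{\bfe}, j}/\tilde{d}_j \to \delta_{w_{\bfe}}$, so it suffices to prove that $r_j / d_{w_{\bfe}, j} \to 0$; note also that $d_{w_{\bfe}, j} \to \infty$, so $\{L_{w_{\bfe}, j}\}$ is a bona fide family of $L$-functions.

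Next I would apply Theorem \ref{zerodistrib} to $\{L_{w_{\bfe}, j}\}$. Proposition \ref{zetaL} guarantees that this family is itself asymptotically very exact, modulo the auxiliary hypothesis that all the families $\{L_{i, k}(s)\}$ are asymptotically exact. If this is not automatic, I pass to a subfamily along which it holds (a diagonal extraction using the fact that every family admits an asymptotically exact subfamily); since the desired statement $r_j / \tilde{d}_j \to 0$ can be verified on every subsequence, no generality is lost. Theorem \ref{zerodistrib} then produces a continuous nonnegative limit density $\calM_{\bflim}$ on $[-\pi, \pi]$ with $\mu_{L_{w_{\bfe}, j}} \to \calM_{\bflim}$ in the $*$-weak topology, and the decisive point is that $\calM_{\bflim}$, being continuous, has no atoms.

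To convert this into a bound on the atom at $\theta = 0$, I fix $\varepsilon > 0$ and choose a nonnegative continuous tent function $\phi_\varepsilon \in C[-\pi, \pi]$ with $\phi_\varepsilon(0) = 1$ and support contained in $[-\varepsilon, \varepsilon]$. Then $\phi_\varepsilon$ dominates the indicator of $\{0\}$, so
\begin{equation*}
\frac{r_j}{d_{w_{\bfe}, j}} \leq \mu_{L_{w_{\bfe}, j}}(\phi_\varepsilon) \longrightarrow \int_{-\pi}^{\pi} \phi_\varepsilon(x)\, \calM_{\bflim}(x)\, dx \leq 2\varepsilon \cdot \|\calM_{\bflim}\|_\infty
\end{equation*}
as $j \to \infty$. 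Taking $\limsup$ in $j$ and then letting $\varepsilon \to 0$ yields the claim. Equivalently, one can replace $\phi_\varepsilon$ by a nonnegative even trigonometric polynomial with value $1$ at the origin (a Fej\'er-type kernel) and run the same argument directly from the explicit formula of Corollary \ref{corexplicit}, bypassing measure theory altogether.

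The only real obstacle is the bookkeeping in the first two paragraphs: isolating the weight-$w_{\bfe}$ factor and, via Proposition \ref{zetaL} together with a mild subfamily extraction, landing in the hypotheses of Theorem \ref{zerodistrib}. Once continuity of $\calM_{\bflim}$ is in hand, the vanishing of every atom is automatic, and the corollary drops out with no additional analytic input.
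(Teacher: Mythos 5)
Your proposal is correct and follows essentially the same route as the paper: reduce to the weight-$w_{\bfe}$ $L$-functions via Proposition \ref{zetaL} (passing to a subfamily where needed), apply Theorem \ref{zerodistrib}, and test the resulting continuous limit density against a bump function concentrated at $\theta = 0$. The only cosmetic differences are that the paper phrases the final step as a contradiction argument with a fixed small support while you take a direct $\varepsilon \to 0$ limit, and that your opening paragraph spells out, via the separation of zero loci on the lines $\Re s = i/2$, why the order of vanishing of $\zeta_j$ at $s = w_{\bfe}/2$ coincides with that of $L_{w_{\bfe},j}$ --- a reduction the paper leaves implicit.
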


\begin{proof}
Suppose that $\limsup \frac{r_j}{\tilde{d}_j}=\varepsilon>0.$ Taking a subsequence we can assume that $\lim\limits_{j\to\infty} \frac{r_j}{\tilde{d}_j}=\varepsilon.$ Taking a subsequence once again and using proposition \ref{zetaL} we can assume that we are working with an asymptotically very exact sequence of $L$-functions $\{L_j(s)\}=\{L_{w_{\bfe}j}(s)\}$ for which the same property concerning $r_j$ holds.

By the previous theorem $\lim\limits_{j\to\infty}\calM_j=\calM_{\bflim}.$ Let us take an even continuous non-negative function $f(x)\in C[-\pi, \pi]$ with the support contained in $(-\frac{\varepsilon}{\alpha}, \frac{\varepsilon}{\alpha}),$ where $\alpha=4\max\{\int_{-\pi}^{\pi}\calM_{\bflim}(x)\,dx, 1\}$ and such that $f(0)=1.$ We see that
$$\varepsilon \leq \lim_{j\to\infty}\calM_j(f(x))=\int_{-\pi}^{\pi}f(x)\calM_{\bflim}(x)\,dx \leq \frac{\varepsilon}{2},$$
so we get a contradiction. Thus the corollary is proven.
\end{proof}

\begin{remark}
It is easy to see that the same proof gives that the multiplicity of the zero at any particular point of the critical line grows asymptotically slower than $d.$
\end{remark}

\begin{remark}
\label{familyineq}
Using theorem \ref{zerodistrib} one can give another proof of the basic inequality for asymptotically very exact families of $L$-functions(corollary \ref{corineqL}). Indeed, all the measures defined by (\ref{measdef}) are non-negative. Thus the limit measure $\calM_{\bflim}$ must have a non-negative density at any point, in particular at $x=0.$ This gives us exactly the basic inequality. In this way we get an interpretation of the difference between the right hand side and the left hand side of the basic inequality as ``the asymptotic number of zeroes of $L_j(s),$ accumulating at $s=\frac{w}{2}$''.

In fact, using the same reasoning as before, we get a family of inequalities (which are interesting when not all the coefficients $\lambda_f$ are non-negative):
$$\sum_{k=1}^{\infty}\lambda_k \cos(kx) q^{-\frac{wk}{2}}\leq \frac{1}{2}$$
for any $x\in \bbR.$
\end{remark}

\begin{remark}
A thorough discussion of zero distribution results of similar type and their applications to various arithmetical problems can be found in \cite{Ser1}.
\end{remark}

\subsection{Examples}

\begin{example}[Curves over finite fields]
In the case of curves over finite fields we recover the theorem 2.1 from \cite{TV97}:

\begin{corollary}
For an asymptotically exact family $\{X_i\}$ of curves over a finite field $\bbF_q$ the limit $\calM_{\{X_i\}}=\lim\limits_{i\to\infty}\calM_{X_i}$ is a continuous function given by an absolutely and uniformly convergent series:
\begin{equation*}
\calM_{\{X_i\}}(x)=1-\sum_{k=1}^{\infty}k\phi_k h_k(x),
\end{equation*}
where
$$h_k(x)=\frac{q^{k/2}\cos(k x)-1}{q^k+1-2q^{k/2}\cos(k x)}.$$
\end{corollary}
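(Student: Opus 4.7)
The plan is to deduce this corollary directly from theorem \ref{zerodistrib} applied to the $L$-function factors of $\zeta_{X_i}(s)$. Recall from the curves example that the numerator $\calL_{X_i}(u)$ of $\zeta_{X_i}(s)$ is an $L$-function of weight $w=1$ and degree $2g_i$. First I would invoke the remark in example \ref{CurveIneq}: since $\lambda_f \geq 0$ for families of curves (the coefficients $\Lambda_f$ count points), the basic inequality from \cite{TV97} shows that any asymptotically exact family of curves is automatically asymptotically very exact. Theorem \ref{zerodistrib} therefore applies and gives
$$\calM_{\{X_i\}}(x) = 1 - 2\sum_{k=1}^{\infty}\lambda_k\cos(kx)\,q^{-k/2},$$
with absolute and uniform convergence, where $\lambda_k = \lim_{i\to\infty}\Lambda_{k,i}/(2g_i)$.

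Next I would translate $\lambda_k$ into the invariants $\phi_m$. Since $\Lambda_k = N_k(X) = \sum_{m\mid k} m\Phi_m(X)$, the normalization convention gives
$$2\lambda_k q^{-k/2} = q^{-k/2}\sum_{m\mid k} m\phi_m.$$
Substituting this into the series for $\calM_{\{X_i\}}(x)$ and interchanging the order of summation (justified by absolute convergence, which in turn follows from the basic inequality $\sum_{m}m\phi_m/(q^{m/2}-1)\leq 1$) transforms the sum over $k$ into a double sum
$$\sum_{k=1}^{\infty} 2\lambda_k q^{-k/2}\cos(kx) = \sum_{m=1}^{\infty} m\phi_m \sum_{l=1}^{\infty} q^{-ml/2}\cos(mlx).$$

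Finally, I would evaluate the inner geometric series. Setting $z = q^{-m/2}e^{imx}$, the inner sum equals $\Re\bigl(z/(1-z)\bigr)$. A direct computation gives
$$\Re\frac{z}{1-z} = \frac{q^{-m/2}\cos(mx) - q^{-m}}{1 - 2q^{-m/2}\cos(mx) + q^{-m}} = \frac{q^{m/2}\cos(mx) - 1}{q^m + 1 - 2q^{m/2}\cos(mx)} = h_m(x),$$
after clearing $q^{-m}$ from numerator and denominator. Combining yields the stated identity. The only subtlety worth flagging is the normalization discrepancy between the degree $2g_i$ of $\calL_{X_i}(u)$ and the factor $g_i$ used in the definition of $\phi_m$; once this is tracked carefully, the proof is a purely mechanical rewriting of theorem \ref{zerodistrib}.
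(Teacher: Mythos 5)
Your proposal is correct and follows exactly the same route as the paper: invoke theorem \ref{zerodistrib} (justified because positivity of $\Lambda_f$ makes any asymptotically exact family of curves automatically asymptotically very exact), rewrite $2\lambda_k=\sum_{m\mid k}m\phi_m$, interchange the order of summation, and resum the inner geometric series $\sum_{l\geq1}t^{-l}\cos(lmx)=\dfrac{t\cos(mx)-1}{t^2+1-2t\cos(mx)}$ with $t=q^{m/2}$ to obtain $h_m(x)$. The paper's proof is a one-line citation of theorem \ref{zerodistrib} plus precisely this series identity; you have merely spelled out the intermediate bookkeeping (normalization by $2g_i$ vs.\ $g_i$, and the divisor-sum rearrangement) that the paper leaves implicit.
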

\begin{proof}
This follows from theorem \ref{zerodistrib} together with the following series expansion:
$$\sum_{l=1}^{\infty}t^{-l}\cos(l k x)= \frac{t\cos(k x)-1}{t^2+1-2t\cos(k x)}.$$
\end{proof}
\end{example}

\begin{example}[Varieties over finite fields]
We can not say much in this case since the zero distribution theorem \ref{zerodistrib} applies only to $L$-functions. The only thing we get is that the multiplicity of zeroes on the line $\Re s = n-\frac{1}{2}$ divided by the sum of Betti numbers tends to zero (corollary \ref{rankbound}).
\end{example}

\begin{example}[Elliptic curves over function fields]

Let us consider first asymptotically bad families of elliptic curves. We have the following corollary of theorem \ref{zerodistrib}.

\begin{corollary}
\label{zeroesMi}
For an asymptotically bad family of elliptic curves $\{E_i\}$ over function fields the zeroes of $L_{E_i}(s)$ become uniformly distributed on the critical line when $i\to \infty.$
\end{corollary}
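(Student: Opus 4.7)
The plan is to apply theorem \ref{zerodistrib} directly to the family $\{L_{E_i}(s)\}$ and then translate the conclusion from the angular variable $\theta$ used to parametrize roots of $\calL_E(u)$ to the critical line $\Re s = 1$. The first step is to verify the hypotheses: an asymptotically bad family has $\lambda_f=0$ for every $f\geq 1$ by definition, so the series $\sum_{f=1}^{\infty}|\lambda_f| q^{-f w_{\bfe}/2}$ is identically zero and a fortiori convergent. Hence such a family is automatically asymptotically very exact and theorem \ref{zerodistrib} applies.

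Having cleared that, applying the theorem gives that $\calM_{\bflim}=\lim_{i\to\infty}\calM_{L_{E_i}}$ exists, with density
\begin{equation*}
\calM_{\bflim}(x)=1-2\sum_{k=1}^{\infty}\lambda_k\cos(kx)\,q^{-wk/2}=1,
\end{equation*}
since all $\lambda_k$ vanish (here $w=2$, because $\calL_E(u)$ has its roots on the circle of radius $q^{-1}=q^{-w/2}$). Thus the normalized angular distribution of the roots converges $*$-weakly to the uniform distribution on $[-\pi,\pi]$.

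It remains to rephrase this in terms of the critical line. A root $\rho=q^{-1}e^{i\theta}$ of $\calL_E(u)$ corresponds to the zeroes $s=1-i\theta/\log q+2\pi i n/\log q$, $n\in\bbZ$, of $L_{E_i}(s)$, all lying on $\Re s=1$ and periodic with period $2\pi/\log q$. The equidistribution of the $\theta_{k,i}$ on $[-\pi,\pi]$ therefore translates into the equidistribution of $\Im(s)$ in any fundamental domain $(-\pi/\log q,\pi/\log q]$ of the critical line, which is precisely the statement of the corollary.

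No real obstacle arises; the corollary is essentially a specialization of theorem \ref{zerodistrib}. The only point that deserves explicit mention is the trivial but necessary implication asymptotically bad $\Rightarrow$ asymptotically very exact, and the change of variable from $\rho$ (or $\theta$) to $s$, which accounts for the period $2\pi/\log q$ on the critical line.
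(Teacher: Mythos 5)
Your proof is correct and follows the same route the paper implicitly intends: an asymptotically bad family has $\lambda_f=0$ for all $f$, hence is trivially asymptotically very exact, and Theorem \ref{zerodistrib} then yields the constant density $\calM_{\bflim}(x)\equiv 1$, i.e.\ the uniform measure on $[-\pi,\pi]$. Your extra remark identifying $w=2$ and translating the angular equidistribution into equidistribution on the critical line (with period $2\pi/\log q$) is a clean and correct elaboration of a point the paper leaves implicit.
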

This result in the particular case of elliptic curves over the fixed field $\bbF_q(t)$ was obtained in \cite{Mi}. In fact, unlike us, Michel gives an estimate for the difference $\calM_i-\calM_{\{E_i\}}$ in terms of the conductor $n_{E_i}.$ It would be interesting to have such a bound in general.

\begin{corollary}
For an asymptotically very exact family of elliptic curves $\{E_i/K_i\}$ obtained by a base change the limit $\calM_{\{E_i/K_i\}}=\lim\limits_{i\to\infty}\calM_{E_i/K_i}$ is a continuous function given by an absolutely and uniformly convergent series:
\begin{equation*}
\calM_{\{E_i/K_i\}}(x)=1-\frac{2}{\nu+4}\,\sum_{v, f}\phi_{v, f} f d_v\sum_{k=1}^{\infty}\frac{\alpha_v^k+\bar{\alpha}_v^k}{q^{f d_v k}}\cos(f d_v k x).
\end{equation*}
\end{corollary}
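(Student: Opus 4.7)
The plan is to apply theorem \ref{zerodistrib} directly to the family $\{L_{E_i/K_i}(s)\}$ of $L$-functions, which have weight $w=2$. Since we are assuming the family is asymptotically very exact (or, if one wishes to drop the hypothesis $\chr\bbF_q\neq 2,3$ or the Galois condition, we pass to an asymptotically very exact subfamily granted by proposition \ref{veryexactell}), the theorem supplies the existence of the limit measure $\calM_{\{E_i/K_i\}}$, its continuity, and the absolutely and uniformly convergent representation
\begin{equation*}
\calM_{\{E_i/K_i\}}(x)=1-2\sum_{n=1}^{\infty}\lambda_n\cos(nx)\,q^{-n}.
\end{equation*}
All that remains is to identify $\lambda_n$ in the desired geometric form and regroup the sum.

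To compute $\lambda_n=\lim_{i\to\infty}\Lambda_n(E_i)/\deg\calL_{E_i}$, I would use that $\deg\calL_{E_i}=n_{E_i}+4g_{K_i}-4$ and that by lemma \ref{conduct} the ratio $(n_{E_i}+4g_{K_i}-4)/g_{K_i}$ converges to $\nu+4$. Hence $\lambda_n=\tilde{\lambda}_n/(\nu+4)$, where $\tilde{\lambda}_n=\lim_{i\to\infty}\Lambda_n(E_i)/g_{K_i}$. Formula (\ref{coefKT}) expresses $\Lambda_n(E_i)$ as a finite sum indexed by triples $(v,f,k)$ with $fkd_v=n$, with summands $fd_v\,\Phi_{v,f}(X_i)(\alpha_v^{fk}+\bar{\alpha}_v^{fk})$. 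For fixed $n$ only finitely many triples contribute, so lemma \ref{veryexact} allows one to pass to the limit term by term and obtain
\begin{equation*}
\tilde{\lambda}_n=\sum_{\substack{v\in|X|,\,f,k\geq 1\\ fkd_v=n}}\phi_{v,f}\,fd_v\,(\alpha_v^{fk}+\bar{\alpha}_v^{fk}).
\end{equation*}

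To finish, I would substitute this expression into the series for $\calM_{\{E_i/K_i\}}$ and interchange the order of summation so that the outer sum runs over pairs $(v,f)$ while the inner sum collects the contributions of all $k\geq 1$, yielding the stated formula. The interchange is legitimate because of absolute convergence: one has $|\alpha_v^{fk}+\bar{\alpha}_v^{fk}|\leq 2q^{fd_v k/2}$, and the very-exactness assumption gives the convergence of $\sum_n|\lambda_n|q^{-n}$, which dominates the triple series after the change of summation variable $n=fkd_v$. The delicate point in this plan is the rearrangement rather than the zero-distribution input, since the latter is a black-box application of theorem \ref{zerodistrib}; verifying absolute convergence via the Weil bound on the Frobenius eigenvalues together with the basic inequality that underlies proposition \ref{veryexactell} is what makes the regrouping rigorous.
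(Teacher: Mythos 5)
Your plan is the right one and, since the paper states this corollary without proof, it is essentially the intended argument: apply theorem~\ref{zerodistrib} to the weight-two $L$-functions $L_{E_i/K_i}$ (passing to an asymptotically very exact subfamily via proposition~\ref{veryexactell} if needed), identify $\lambda_n=\tilde\lambda_n/(\nu+4)$ using lemma~\ref{conduct} and $d_i=n_{E_i}+4g_{K_i}-4$, compute $\tilde\lambda_n$ from (\ref{coefKT}) by a term-by-term limit over the finitely many triples $(v,f,k)$ with $fkd_v=n$, and regroup. All of that is correct.

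However, what you actually derive does not literally match the displayed formula, and you should not assert that the rearrangement ``yields the stated formula.'' Your $\tilde\lambda_n$ correctly carries $\alpha_v^{fk}+\bar\alpha_v^{fk}$ -- this is forced by (\ref{coefKT}), which in turn reflects that for a place $w$ of $K_i$ of residue degree $m$ over $v$ one has $\alpha_w=\alpha_v^m$. After regrouping, the inner summand is therefore
\begin{equation*}
\frac{\alpha_v^{fk}+\bar\alpha_v^{fk}}{q^{fd_vk}}\cos(fd_vkx),
\end{equation*}
whereas the printed statement has $\alpha_v^{k}+\bar\alpha_v^{k}$. These are genuinely different (already for a place with $d_v=1$ and $f=2$). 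Your version is the correct one: it is the one compatible with (\ref{coefKT}), and it is also the unique choice whose term-by-term logarithm reproduces the expression $\log L_{\{E_i/K_i\}}(s)=-\frac{1}{\nu+4}\sum_{v,f}\phi_{v,f}\log\bigl(1-(\alpha_v^f+\bar\alpha_v^f)\rmN v^{-fs}+\rmN v^{f(1-2s)}\bigr)$ given just above the corollary. The printed exponent is a misprint; your write-up should say so and give the corrected display, rather than silently conflating the two.

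A small further correction to the convergence argument: the convergence of $\sum_n|\lambda_n|q^{-n}$ (very-exactness) does not by itself dominate the rearranged absolute series, since each $\lambda_n$ is already a signed sum with internal cancellation. The majorant you actually need, and which you do cite at the end, comes from the Weil bound $|\alpha_v^{fk}+\bar\alpha_v^{fk}|\le 2q^{fd_vk/2}$ giving $2\sum_n\nu_nq^{-n/2}$ with $\nu_n=\lim_iN_n(X_i)/g_i$, convergent by the basic inequality for the curves $X_i$ as used in proposition~\ref{veryexactell}. Drop the appeal to $\sum_n|\lambda_n|q^{-n}$ and lead with this estimate instead.
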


\begin{corollary}
\label{rankboundell}
For a family of elliptic curves $\{E_i/K_i\}$ obtained by a base change
$$\lim\limits_{i\to\infty} \frac{r_i}{g_i}=0.$$
\end{corollary}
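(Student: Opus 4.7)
The plan is to deduce Corollary \ref{rankboundell} from Corollary \ref{rankbound} (applied to the family of $L$-functions $L_{E_i/K_i}(s)$, which have weight $w=2$ so that the critical point is $s=1$) by controlling the discrepancy between the two natural normalizations: degree of the $L$-function versus genus of the base curve. The $L$-function $L_{E_i/K_i}(s)$ has degree $d_i=n_{E_i}+4g_{K_i}-4$, and since an $L$-function equals its own essential part with $\epsilon_{w_{\bfe}}=1$, Corollary \ref{rankbound} will give $r_i/d_i\to 0$ whenever our family is asymptotically very exact. The passage from $r_i/d_i\to 0$ to $r_i/g_{K_i}\to 0$ then reduces to showing that $d_i/g_{K_i}$ stays bounded, which is precisely the content of Lemma \ref{conduct}.

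In detail, I would first treat the case in which $\chr \bbF_q\neq 2,3$ or all the extensions $K_i/K_0$ are Galois. Under either hypothesis, Proposition \ref{veryexactell} asserts that the family $\{E_i/K_i\}$ is itself asymptotically very exact. Corollary \ref{rankbound} then yields $r_i/d_i\to 0$. On the other hand, Lemma \ref{conduct} gives a constant $C=C(E_0/K_0)$ such that $n_{E_i}/g_{K_i}\leq C$ for all $i$, whence $d_i/g_{K_i}\leq C+4$ for $i$ large enough. Multiplying,
\begin{equation*}
\frac{r_i}{g_{K_i}}=\frac{r_i}{d_i}\cdot\frac{d_i}{g_{K_i}}\longrightarrow 0.
\end{equation*}

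The general case, where neither hypothesis of Lemma \ref{conduct} is assumed, I would handle by a standard extraction argument. Suppose for contradiction that $\limsup_i r_i/g_{K_i}=\varepsilon>0$, and pass to a subsequence with $r_i/g_{K_i}\geq \varepsilon/2$. By the first half of Proposition \ref{veryexactell} this subsequence still contains an asymptotically very exact sub-subsequence. The boundedness statement in Lemma \ref{conduct} holds for \emph{any} family obtained by a base change (it does not require the Galois or characteristic hypothesis), so the argument of the previous paragraph applies verbatim to the sub-subsequence, giving $r_i/g_{K_i}\to 0$ along it, a contradiction.

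The only genuinely nontrivial ingredient is the boundedness of $n_{E_i}/g_{K_i}$ from Lemma \ref{conduct}, which in turn relies on the fact that the local exponents $n_v(E)$ are non-increasing in ramified extensions and take only finitely many values; everything else is a formal combination of already-established results, so I do not anticipate any substantial obstacle beyond keeping the normalizations straight.
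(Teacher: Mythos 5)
Your proof is correct and takes essentially the same route as the paper: the paper's own argument is a single sentence invoking Proposition \ref{veryexactell} (any base-change family contains an asymptotically very exact subfamily) and then Corollary \ref{rankbound}. You make explicit two points the paper leaves implicit: the extraction/contradiction argument needed to upgrade a statement about subsequences to one about the full sequence, and the change of normalization from $r_i/d_i$ to $r_i/g_i$ via the boundedness of $n_{E_i}/g_{K_i}$ from Lemma \ref{conduct}. Both of these are correctly handled, and the only minor inefficiency is the initial case split on $\chr\bbF_q$ and Galois extensions, which is subsumed by the extraction argument you give afterwards.
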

\begin{proof}
By proposition \ref{veryexactell} any such family contains an asymptotically very exact subfamily so we can apply corollary \ref{rankbound}.
\end{proof}

\begin{remark}
For a fixed field $K$ and elliptic curves over it a similar statement can be deduced from the bounds in \cite{Bru}. However, in the case of the base change Brumer's bounds do not imply corollary \ref{rankbound}. It would be interesting to see, what bounds one can get for the analytic ranks of individual elliptic curves when we vary the ground field $K$. Getting such a bound should be possible with a proper choice of a test function in the explicit formulae.
\end{remark}
\end{example}

\section{Open questions and further research directions}
\label{sectquestions}

In this section we would like to gather together the questions which naturally arise in the connection with the previous sections. Let us start with some general questions. First of all:

\begin{question}
To which extent the formal zeta and $L$-functions defined in section \ref{sectzeta} come from geometry?
\end{question}

One can make it precise in several ways. For example, it is possible to ask whether any $L$-function of weight $w,$ such that $\calL(u)$ has integral coefficients is indeed the characteristic polynomial of the Frobenius automorphism acting on the $w$-th cohomology group of some variety $V/\bbF_q.$ A partial answer to this question when $w=1$ is provided by the Honda--Tate theorem on abelian varieties \cite{Ta}.
\begin{question}
Describe the set $\{(\lambda_1, \lambda_2, \dots)\}$ for asymptotically exact (very exact) families of zeta functions ($L$-functions).
\end{question}

There are definitely some restrictions on this set, namely those given by various basic inequalities (theorems \ref{basicineqL} and \ref{basicineqzeta}, remark \ref{familyineq}). It would be interesting to see whether there are any others. We emphasize that the problem is not of arithmetic nature since we do not assume that the coefficients of polynomials, corresponding to $L$-functions, are integral. It would be interesting to see what additional restrictions the integrality condition on the coefficients of $\calL(u)$ might give. Note that, using geometric constructions, Tsfasman and Vl\u adu\c t \cite{TV97} proved that the sets of parameters $\lambda_f,$ satisfying $\lambda_f \geq 0$ for any $f$ and the basic inequality are all realized when $q$ is a square and $w=1$. This implies the same statement for $L$-functions with arbitrary $q$ and $w.$ However, our new $L$-function might no longer have integral coefficients.

\begin{question}
How many asymptotically good (very good) families are there among all asymptotically exact (very exact) families?
\end{question}

The ``how many'' part of the question should definitely be made more precise. One way to do this is to consider the set $V_g$ of the vectors of coefficients of polynomials corresponding to $L$-functions  of degree $d$ and its subset $V_d(f, a, b)$ consisting of the vectors of coefficients of polynomials corresponding to $L$-functions with $a<\frac{\Lambda_f}{d}<b.$ A natural question is whether the ratio of the volume of $V_d(f, a, b)$ to the volume of $V_g$ has a limit when $d\to\infty$ and what this limit is. See \cite{DH} for some information about $V_d.$ The question is partly justified by the fact that it is difficult to construct asymptotically good families of curves. We would definitely like to know why.

Let us now ask some questions concerning the concrete results on zeta and $L$-functions proven in the previous sections.

\begin{question}
Is it true that the generalized Brauer--Siegel theorem \ref{BSGen} holds on the critical line for some (most) asymptotically very exact families?
\end{question}

It is sure that without the additional arithmetic conditions on the family the statement does not hold. The most interesting families here are the families of elliptic curves over function fields considered in subsection \ref{exBS} due to the arithmetic applications. An example of a family of elliptic surfaces for which the statement holds is given in \cite{HP}. It is interesting to look at some other particular examples of families of curves over finite fields where the corresponding zeta functions are more or less explicitly known. These include the Fermat curves \cite{KS} and the Jacobi curves \cite{Ko}.

Some examples we know to support the positive answer to the above question come from the number field case. It is known that there exists a sequence $\{d_i\}$ in $\bbN$ of density at least $\frac{1}{3}$  such that
$$\lim\limits_{i\to\infty}\frac{\log \zeta_{\bbQ(\sqrt{d_i})}(\frac{1}{2})}{\log d_i}=0$$
(cf. \cite{IS}). The techniques of the evaluation of mollified moments of Dirichlet $L$-functions used in that paper is rather involved. It would be interesting to know whether one can obtain analogous results in the function field case. The related questions in the function field case are studied in \cite{KS}. It is not clear whether the results on the one level densities for zeroes obtained there can be applied to the question of finding a lower bound on $\frac{\log |c_i|}{d_i}$ for some positive proportion of fields (both in the number field and in the function field cases).

\begin{question}
Prove the generalized Brauer--Siegel theorem \ref{BSGen} with an explicit error term.
\end{question}

This was done for curves over finite fields in \cite{LZ} and looks quite feasible in general. It is also worth looking at particular applications that such a result might have, in particular one may ask what bounds on the Euler--Kronecker constants it gives.

\begin{question}
How to characterize measures corresponding to asymptotically very exact families?
\end{question}

This was done in \cite{TV97} for families such that $\lambda_f\geq 0$ for all $f.$ The general case remains open.

\begin{question}
Estimate the error term in theorem \ref{zerodistrib}.
\end{question}

As it was mentioned before, in the case of elliptic curves over $\bbF_q(t)$ the estimates were carried out in \cite{Mi}.

\begin{question}
Find explicit bounds on the orders of zeroes of $L$-functions on the line $\Re s=\frac{w}{2}.$
\end{question}

The corollary \ref{rankbound} gives that the ratio $\frac{r_i}{d_i}\to 0$ for asymptotically very exact families (here $r_i$ is the multiplicity of the zero). In a particular case of elliptic curves over a fixed function field Brumer in \cite{Bru} gives a bound which grows asymptotically slower than the conductor. Using explicit formulae with a proper choice of test functions, it should be possible to give such upper bounds for families obtained by a base change if not in general.

Let us finally ask a few more general questions.

\begin{question}
How can one apply the results of this paper to get the information about the arithmetic or geometric properties of the objects to which $L$-functions are associated?
\end{question}
We carried out this task (to a certain extent) in the case of curves and varieties over finite fields and elliptic curves over function fields. Additional examples are more than welcome.

The last but not least:

\begin{question}
What are the number field analogues of the results obtained in this paper?
\end{question}

It seems that most of the results can be generalized to the framework of the Selberg class (as described, for example, in \cite[Chapter 5]{IK}), subject to imposing some additional hypothesis (such as the Generalized Riemann Hypothesis, the Generalized Ramanujan Conjectures, etc.). Of course, one will have to overcome quite a lot of analytical difficulties on the way (compare, for example, \cite{TV97} and \cite{TV02}).

We hope to return to this interesting and promising subject later on.

\subsection*{Acknowledgements.} I would like to thank my teachers Michael Tsfasman and Serge Vl\u adu\c t who taught me the asymptotic theory of global fields. The discussions with them and their constant advises were of great value.

\end{document}